\DeclareFontFamily{U}{rsfs}{\skewchar\font127 }
\DeclareFontShape{U}{rsfs}{m}{n}{%
   <-6> rsfs5
   <6-8> rsfs7
   <8-> rsfs10
}{}
\newcommand*{\be}[1]{\begin{equation}\label{#1}}
\newcommand*{\ee}{\end{equation}}
\DeclareMathOperator{\grad}{grad}
\DeclareMathOperator{\curl}{curl}
\DeclareMathOperator{\sym}{sym}
\DeclareMathOperator{\diverenge}{div}
\DeclareMathOperator{\im}{im}
\renewcommand{\div}{\diverenge}
\newcommand{\bR}{\mathbb{R}}
\newcommand{\cP}{\mathcal{P}}
\newcommand{\cQ}{\mathcal{Q}}
\newcommand{\bS}{\mathbb{S}}
\newcommand{\bT}{\mathbb{T}}
\newcommand{\bM}{\mathbb{M}}
\newcommand{\cT}{\mathcal{T}}
\newcommand{\px}{\frac{\partial}{\partial x}}
\newcommand{\py}{\frac{\partial}{\partial y}}
\newcommand{\pz}{\frac{\partial}{\partial z}}
\newcommand{\pxy}{\frac{\partial^2}{\partial x \partial y}}
\newcommand{\pxz}{\frac{\partial^2}{\partial x \partial z}}
\newcommand{\pyz}{\frac{\partial^2}{\partial y \partial z}}
\newcommand{\pxyz}{\frac{\partial^3}{\partial x \partial y \partial z}}
\DeclareMathOperator{\dev}{dev}
\newcommand{\for}{\text{ for }}
\DeclareMathOperator{\Span}{span}
\title{Finite element grad grad complexes and elasticity complexes on cuboid meshes}
\author{\name Jun Hu \inst Peking University \email hujun@math.pku.edu.cn \and
	\name Yizhou Liang \inst Peking University \email lyz2015@pku.edu.cn
	\and
	\name  Ting Lin \inst Peking University \email lintingsms@pku.edu.cn}
\begin{document}

\maketitle
\begin{abstract}
This paper constructs two conforming finite element grad grad and elasticity complexes on the cuboid meshes. For the finite element grad grad complex, an $H^2$ conforming finite element space, an $\bm H(\curl; \mathbb S)$ conforming finite element space, an $\bm H(\operatorname{div}; \mathbb T)$ conforming finite element space and an $\bm L^2$ finite element space are constructed. Further, a finite element complex with reduced regularity is also constructed, whose degrees of freedom for the three diagonal components are coupled. For the finite element elasticity complex, a vector $\bm H^1$ conforming space and an $\bm H(\curl \curl^{\mathsf T}; \mathbb S)$ conforming space are constructed. Combining with an existing $\bm H(\div;\mathbb S) \cap \bm H(\div\div;\mathbb S)$ element and $\bm H(\div; \mathbb S)$ element, respectively, these finite element spaces form two different finite element elasticity complexes. The exactness of all the finite element complexes is proved.
\end{abstract}
\tableofcontents

 \section{Introduction}
Differential complexes have been an important tool in the study and design of finite element methods~\cite{2006ArnoldFalkWinther,2010ArnoldFalkWinther,2018Arnold}. The most canonical differential complex is the de Rham complex, and it plays an important role in the finite element study of electromagnetism and fluid dynamics. In this paper, we focus on the construction of finite elements for another two differential complexes, i.e., the so-called gradgrad complex \cite{dirk2020,2021ArnoldHu}
\begin{equation}\label{eq:intro:gradgrad}
      \cP_1 \stackrel{\subset}{\longrightarrow} H^2(\Omega)\stackrel{\grad \grad}{\longrightarrow} \bm H(\curl,\Omega; \bS) \stackrel{\curl}{\longrightarrow} \bm H(\div,\Omega; \bT) \stackrel{\div}{\longrightarrow} \bm L^2(\Omega) \longrightarrow 0,
    \end{equation}
and the elasticity complex \cite{2008ArnoldAwanouWinther,2021ArnoldHu} 
    \begin{equation}\label{eq:intro:elasticity}
        \bm{\mathcal{RM}} \stackrel{\subseteq}{\longrightarrow} \bm H^{1}\left(\Omega\right) \stackrel{\operatorname{sym} \operatorname{grad}}{\longrightarrow} \bm H(\operatorname{curl} \operatorname{curl}^{\mathsf{T}}, \Omega ; \mathbb{S}) \stackrel{\operatorname{curl} \operatorname{curl}^{\mathsf{T}}}{\longrightarrow} \bm H(\operatorname{div}, \Omega ; \mathbb{S}) \stackrel{\div}{\longrightarrow} \bm L^{2}(\Omega) \longrightarrow 0.
        \end{equation}
Here $\mathbb{T}$ and $\mathbb{S}$ denote the spaces of traceless and symmetric matrices in three dimensions,
respectively, the operators $\operatorname{curl}$ and $\operatorname{div}$ act row-wise on the matrix-valued functions, and the operator $\operatorname{curl}^{\mathsf T}$ acts column-wise on the matrix-valued functions. Here $\mathcal P_1$ is the space of linear function and the rigid motion space $\bm{\mathcal{RM}} := \{\bm a + \bm b \times \bm x:\bm a, \bm b \in \mathbb R^3\}.$

The finite element discretization of the gradgrad complex \eqref{eq:intro:gradgrad} is related to the linearized Einstein-Bianchi system~\cite{quenneville2015new}. The first finite element
 gradgrad complex on tetrahedral grids was constructed in \cite{2021HuLiang}, and those finite element spaces can be used to solve the linearized Einstein-Bianchi system within the mixed form. Recently, several discrete divdiv complexes were constructed \cite{hu2021divdiv1,hu2022new,2020ChenHuang3D}, and the associated finite element
spaces can be used to discretize the linearized Einstein-Bianchi system within the
dual formulation introduced in \cite{quenneville2015new}. 

The elasticity complex~\eqref{eq:intro:elasticity} plays an important role in the theoretical and numerical analysis of linear elasticity problems, cf.~\cite{2002ArnoldWinther,2008ArnoldAwanouWinther}. It can be derived from the composition of de Rham complexes in the so-called Bernstein-Gelfand-Gelfand (BGG)
construction \cite{2021ArnoldHu}. By the BGG construction, a two-dimensional finite element elasticity complex has been constructed in \cite{2018ChristianseHuHu}. On the Clough-Tocher split in two dimensions, a finite element elasticity complex was proposed in \cite{christiansen2022finite};  On the Alfeld split in three dimensions, a finite element elasticity complex was proposed in \cite{2020Christiansen}. Recently, a finite element elasticity complex on a general tetrahedral mesh was constructed in \cite{chen2022finite} in which the $H(\operatorname{div},\Omega;\mathbb{S})$ finite element is the Hu-Zhang element for the symmetric stress tensor~\cite{MR3352360,MR3301063,MR3529252}.

In this paper, we construct new families of finite element complexes of these two complexes on cuboid meshes. For each complex, two families of discrete complexes with different local and global regularity have been constructed. For the finite element gradgrad complex, the $H^2$ finite element is the three-dimensional Bogner--Fox--Schmit (BFS) element, whose restriction on each face of each element is a two-dimensional BFS element. The $\bm H(\curl;\mathbb S)$ conforming finite element space constructed in this paper is shown to possess higher regularity. In fact, the finite element space is also $\bm H(\curl \curl^{\mathsf{T}}; \mathbb S)$ conforming, and each component is $H^1$ conforming. As a result, this finite element is also used in the construction of the finite element elasticity complexes. 
For the $\bm H(\div; \mathbb T)$ conforming finite element space, the degrees of freedom defined on each diagonal component are actually those of the Lagrange element. As a result, the diagonal components admit $H^1$ regularity. The regularity seems necessary if the degrees of freedom on each component are considered separately. 
The finite element complexes formed by the above spaces are shown to be exact in a contractible domain. Further, an $\bm H(\curl; \mathbb S)$ element and an $\bm H(\div; \mathbb T)$ element with reduced regularity are constructed. In this case, the $\bm H(\curl; \mathbb S)$ element is not $\bm H(\curl \curl^{\mathsf T};\mathbb S)$ conforming, and the degrees of freedom for diagonal components of $\bm H(\div; \mathbb T)$ element are coupled. For the finite element elasticity complex, the $\bm H^1$ conforming element is newly constructed, and the $\bm H(\curl \curl^{\mathsf T}; \mathbb S)$ element comes from the construction of the above finite element gradgrad complex. There are two different choices of $\bm H(\div;\mathbb S)$ elements, one is $\bm H(\div;\mathbb S)$ conforming element from in \cite{hu2014simple}, the other is the $\bm H(\div;\mathbb S) \cap \bm H(\div\div;\mathbb S)$ element from \cite{hu2022new}. Combined with the cuboid Brezzi--Douglas--Marini \cite{2013BoffiBrezziFortin} and DG element, respectively, two finite element elasticity complexes are formed and proved to be exact.

The rest of the paper is organized as follows: Section 2 introduces the notation. Section 3 constructs two families of finite element gradgrad complexes with different regularity on cuboid meshes. Section 4 designs two families of the finite element elasticity complexes.  

\section{Notations}

Let $\Omega$ be a contractible domain with Lipschitz boundary, which can be partitioned into cuboid grids.
Denote by $\mathbb{M}$ the space of $(3\times 3)$ matrices, and $\bS$ and  $\bT$ the space of symmetric and traceless matrices, respectively. For convenience, the components of vectors and matrices are all indexed by $x$, $y$, and $z$. For example,
\begin{equation}
\bm v = \begin{bmatrix} v_x \\ v_y  \\ v_z \end{bmatrix}
\end{equation}
and
\begin{equation}
\bm \sigma = \begin{bmatrix} \sigma_{xx} & \sigma_{xy} & \sigma_{xz} \\ 
    \sigma_{yx} & \sigma_{yy} & \sigma_{yz} \\
    \sigma_{zx} & \sigma_{zy} & \sigma_{zz} \end{bmatrix}.
\end{equation}

For matrices, the operators $\curl$ and $\div$ are applied on each row, while $\curl^{\mathsf T}$ means applying the curl operator on each column, that is, $\curl^{\mathsf T} \sigma = (\curl \sigma^{\mathsf T})^{\mathsf T}$ for $\sigma \in \mathbb M$. The symmetric part of matrix $\sigma$ is denoted as $\sym \sigma = \frac{1}{2}(\sigma+\sigma^{\mathsf T})$, and the traceless part is denoted as $\dev \sigma = \sigma - \frac{1}{3}(\sigma_{xx} + \sigma_{yy} + \sigma_{zz}).$

The mesh considered in this paper will be a cuboid mesh $\mathcal T_h$. For edge $e$ of an element $K$, the subscript will be imposed to indicate its direction. For example, $e_x$ represents an edge parallel to the $x$-direction. Similar notations will be adopted for the faces of elements, for example, $F_{yz}$ represents a face normal to the $x$-direction, see \Cref{fig:cuboid}.

\begin{figure}
    \centering
    \includegraphics{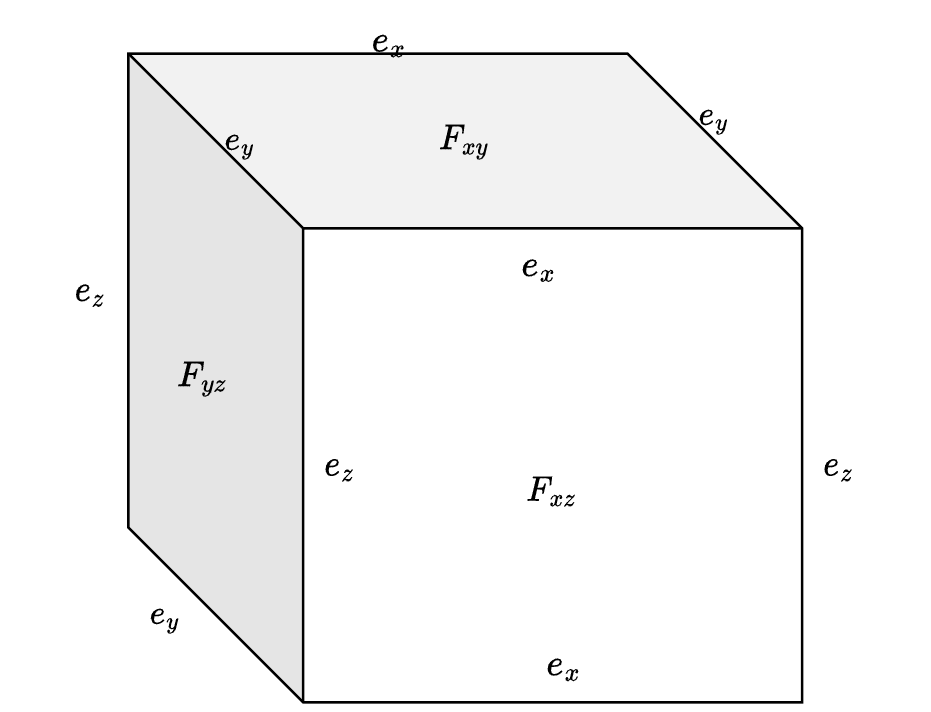}
    \caption{An illustration of notations.}
    \label{fig:cuboid}
\end{figure}
Set 
\begin{equation}
\cQ_{k_1,k_2,k_3}(x,y,z) := \Span \{ x^{l_1}y^{l_2}z^{l_3} : 0 \le l_i \le k_i \}.
\end{equation}
Unless otherwise specified, $\cQ_{k_1,k_2,k_3}(x,y,z)$ will be shorten as $\cQ_{k_1,k_2,k_3}$, and the polynomial spaces on edges and faces $\cQ_{k}(x)$ and $\cQ_{k_1,k_2}(x,y)$ are defined similarly. The corresponding space is defined as a null space when one of the index $k_i$ is negative.

Two finite element spaces on the cuboid mesh will be used without extra definition: the Lagrange finite element space
$$\mathcal{L}_{k,k,k} = \{ u \in C^0(\Omega); u|_K \in \cQ_{k,k,k}, K \in \mathcal T_h\},$$ and the discontinuous finite element space $$\mathcal{DG}_{k_1,k_2,k_3} = \{ u \in L^2(\Omega); u|_K \in \cQ_{k_1,k_2,k_3}, K \in \mathcal T_h\}.$$

For unisolvency, the proof is considered on the reference element $T = [0,1]^3$. To prove the exactness, it is necessary to count the dimension of the constructed finite element spaces. Denote by $\mathscr V$ the number of vertices, $\mathcal E$ the number of edges, $\mathscr F$ the number of faces, and $\mathscr T$ the number of cells. 

Henceforth, the vector-valued Sobolev spaces are given by $\bm H^1$ and $\bm L^2$.
 \section{Discrete gradgrad complex}
\label{sec:fe}
This section considers four types of finite element spaces: $H^2$ conforming space $U_h$, $\bm H(\curl, \Omega;\bS)$ conforming space $\bm{\Sigma}_h$, $\bm H(\div, \Omega;\bT)$ conforming space $\bm{\Xi}_h$ and $\bm L^2(\Omega)$ space $\bm{Q}_h$. These finite element spaces will be used to construct 
\begin{equation}
    \cP_1 \stackrel{\subset}{\longrightarrow} U_h \stackrel{\grad \grad}{\longrightarrow} \bm \Sigma_h \stackrel{\curl}{\longrightarrow} \bm \Xi_h \stackrel{\div}{\longrightarrow} \bm Q_h \longrightarrow 0,
\end{equation}
a discrete subcomplex of the following continuous gradgrad complex,
\begin{equation}
    \label{eq:complex-gradgrad-C}
      \cP_1 \stackrel{\subset}{\longrightarrow} H^2(\Omega)\stackrel{\grad \grad}{\longrightarrow} \bm H(\curl,\Omega; \bS) \stackrel{\curl}{\longrightarrow} \bm H(\div,\Omega; \bT) \stackrel{\div}{\longrightarrow} \bm L^2(\Omega) \longrightarrow 0,
    \end{equation}
    where $\cP_1$ is the space of polynomials of degree $\le 1$ ($\dim = 4$), the spaces

    \begin{equation}
        \bm H(\curl,\Omega;\bS) := \{v \in \bm L^2(\Omega; \bS) ~|~ \curl v \in \bm L^2(\Omega; \bM) \},
    \end{equation}

    and 
    \begin{equation}
        \bm H(\div,\Omega;\bT) := \{v \in \bm L^2(\Omega; \bT) ~|~ \div v \in \bm L^2(\Omega, \bR^3) \}.
    \end{equation}

\subsection{Local version: the polynomial complex}
\label{sec:gradgrad:local}
In this subsection, the local version of the finite element subcomplex is constructed with the following form:
\begin{equation}\label{eq:complex-gradgrad-poly}
    \bm \cP_1 \stackrel{\subset}{\longrightarrow} U_T \stackrel{\grad \grad}{\longrightarrow} \bm \Sigma_T \stackrel{\curl}{\longrightarrow} \bm \Xi_T \stackrel{\div}{\longrightarrow} \bm Q_T \longrightarrow 0,
\end{equation}
for $k \ge 3$, where the local spaces
$$ U_T := \cQ_{k,k,k},$$ 
$$ \bm \Sigma_T := \begin{bmatrix}
    \cQ_{k-2,k,k} & \cQ_{k-1,k-1,k} & \cQ_{k-1,k,k-1}\\
    \cQ_{k-1,k-1,k} & \cQ_{k,k-2,k} & \cQ_{k,k-1,k-1} \\
    \cQ_{k-1,k,k-1} & \cQ_{k,k-1,k-1} & \cQ_{k,k,k-2}
\end{bmatrix},$$
$$
\bm \Xi_T := \begin{bmatrix}
    \cQ_{k-1,k-1,k-1} & \cQ_{k-2,k,k-1} &\cQ_{k-2,k-1,k} \\
    \cQ_{k,k-2,k-1} & \cQ_{k-1,k-1,k-1} & \cQ_{k-1,k-2,k} \\
    \cQ_{k,k-1,k-2} & \cQ_{k-1,k,k-2} &\cQ_{k-1,k-1,k-1}
\end{bmatrix} ,$$
and 
$$\bm Q_T := \begin{bmatrix}
    \cQ_{k-2,k-1,k-1} \\ \cQ_{k-1,k-2,k-1} \\\cQ_{k-1,k-1,k-2} \\ 
\end{bmatrix}.$$

Clearly, \eqref{eq:complex-gradgrad-poly} is a complex for the choice of local function spaces, the dimension counting 
is currently admitted and this will be postponed to the global discrete complex. 
\begin{proposition}
The polynomial sequence \eqref{eq:complex-gradgrad-poly} is an exact complex.
\end{proposition}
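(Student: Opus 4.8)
Since the compositions $\curl\circ\grad\grad$ and $\div\circ\curl$ vanish identically, the sequence \eqref{eq:complex-gradgrad-poly} is already a complex, so it remains only to establish exactness at each node. Concretely, the plan is to prove the four statements $\ker(\grad\grad)\cap U_T=\cP_1$, $\im(\grad\grad)=\ker(\curl)\cap\bm\Sigma_T$, $\im(\curl)=\ker(\div)\cap\bm\Xi_T$, and $\div\,\bm\Xi_T=\bm Q_T$, treating them one node at a time. The guiding principle is to build explicit polynomial potentials and to track their anisotropic degrees through the tensor-product structure of the $\cQ_{k_1,k_2,k_3}$ spaces, using one-dimensional antiderivatives that raise the degree in a single variable by one. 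The alternating sum of the dimensions equals $\dim\cP_1=4$, so the Euler characteristic is consistent with exactness and could in principle bypass one node; since the dimension count is deferred, however, I would argue each node directly.

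The two ends are straightforward. Exactness at $U_T$ is immediate: a polynomial with vanishing Hessian is affine, and $\cP_1\subset\cQ_{k,k,k}$, so $\ker(\grad\grad)\cap U_T=\cP_1$. For surjectivity of $\div$ onto $\bm Q_T$, I would first solve each row separately by antidifferentiating the $i$-th prescribed component in the $i$-th coordinate direction, which places it in the diagonal space $\cQ_{k-1,k-1,k-1}$ and already realizes an arbitrary right-hand side row by row. The only genuine coupling between the rows is the pointwise traceless constraint $\xi_{xx}+\xi_{yy}+\xi_{zz}=0$ on the diagonal, so the remaining task is to correct the diagonal entries to restore tracelessness without changing the three row divergences; this is arranged by transferring the excess trace into the off-diagonal entries. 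Ensuring that this correction stays inside the prescribed off-diagonal $\cQ$-spaces is the subtle point here.

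For the middle nodes I would invoke Poincar\'e/Saint-Venant type arguments on the contractible reference cube $T$ and then verify degrees. At $\bm\Sigma_T$, a symmetric $\sigma$ with $\curl\sigma=0$ is the Hessian of a scalar potential $u$; the content is to check that integrating the tensor-product components of $\sigma$ produces $u\in\cQ_{k,k,k}$. At $\bm\Xi_T$, a traceless, divergence-free $\xi$ is the $\curl$ of a symmetric matrix $\sigma$ (recall that $\curl$ sends symmetric matrices into traceless ones), and here the work is to construct $\sigma$ so that each entry lands in the prescribed component of $\bm\Sigma_T$ while retaining symmetry. Throughout these constructions the algebraic side constraints, symmetry of $\sigma$ and tracelessness of $\xi$, must be preserved by the potential operators. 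As an alternative that handles all four nodes at once, one could realize \eqref{eq:complex-gradgrad-poly} through the Bernstein--Gelfand--Gelfand construction applied to two compatible, exact tensor-product de Rham complexes on $T$, whose exactness on a box is classical.

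The main obstacle is the simultaneous control of the differential and the algebraic data: the potentials must satisfy the Poincar\'e-type integrability conditions \emph{and} fall into the exact anisotropic $\cQ$-spaces while respecting symmetry or tracelessness. I expect the degree bookkeeping for the $\curl$-potential at $\bm\Sigma_T$ and the traceless correction at $\bm Q_T$ to be the most delicate steps, and I would organize both by applying the one-dimensional antiderivative operators variable-by-variable to the tensor-product factors, so that each entry's degree can be read off directly.
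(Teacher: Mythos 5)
Your plan diverges from the paper's proof in one decisive way, and that divergence is where the gap lies. The paper does \emph{not} prove exactness at $\bm\Xi_T$ directly: it proves only (i) $\div\bm\Xi_T=\bm Q_T$ and (ii) $\ker(\curl)\cap\bm\Sigma_T=\grad\grad\, U_T$, and then closes the remaining node by the dimension count (the alternating sum of dimensions vanishes, which, given exactness at every other node, forces exactness at $\bm\Xi_T$). You explicitly decline this shortcut ``since the dimension count is deferred,'' and instead promise a direct construction: for every traceless, divergence-free $\xi\in\bm\Xi_T$, a \emph{symmetric} potential $\sigma\in\bm\Sigma_T$ with $\curl\sigma=\xi$ and all nine entries in the prescribed anisotropic $\cQ$-spaces. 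This is the hardest step of the whole statement and your proposal never executes it; it is only announced. Note that the obvious row-wise Poincar\'e operator for $\curl$ does not produce a symmetric matrix, and symmetrizing afterwards destroys the property $\curl\sigma=\xi$, so there is no routine argument to fall back on — this is precisely why the paper routes around this node via dimension counting (and why your fallback, a BGG construction from two tensor-product de Rham complexes, would itself require a careful verification of the anisotropic degrees that you also do not carry out). As written, the proposal is therefore incomplete at the one node where completeness genuinely matters.

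Two smaller points. First, your surjectivity construction antidifferentiates $q_x$ in $x$, placing it on the diagonal, and then needs a trace-restoring correction that you flag as ``the subtle point''; the paper avoids the issue entirely by antidifferentiating $q_x$ in $y$ and placing it in the off-diagonal entry $v_{xy}\in\cQ_{k-2,k,k-1}$ (similarly $v_{yz}$, $v_{zx}$), so the diagonal stays zero and tracelessness is automatic. Your version can in fact be repaired — with $t=\tau_{xx}+\tau_{yy}+\tau_{zz}\in\cQ_{k-1,k-1,k-1}$ one can subtract $t/3$ from each diagonal entry and compensate row $x$ by $\tau_{xy}=\tfrac13\int_0^y\partial_x t\,ds\in\cQ_{k-2,k,k-1}$, etc. — but you did not do this, and the paper's choice makes the whole issue vanish. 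Second, at the $\bm\Sigma_T$ node your sketch (``a symmetric curl-free $\sigma$ is a Hessian; check degrees'') matches the paper's argument in spirit, but the paper's two-step route — write $\sigma=\grad\bm\phi$ row-wise, use symmetry of $\sigma$ to conclude $\curl\bm\phi=0$, then $\bm\phi=\grad u$ with $u\in\cQ_{k,k,k}$ — is exactly the degree bookkeeping you defer; it should be spelled out rather than cited as a generic Poincar\'e fact.
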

\begin{proof}
It suffices to show the discrete divergence operator is surjective, and the kernel of the discrete $\curl$ operator is the image of $\grad \grad$.

First, for $\bm q \in Q_T$, define $\bm v \in \bm \Xi_T$ such that 
$v_{xy}(x,y,z) = \int_{0}^{ y} q_x ds$, and similarly define $v_{yz}$, $v_{zx}$. The other components of $\bm v$ are set as zero. Then it holds that $\div \bm v = \bm q$, which proves $\div \bm \Xi_{T} = \bm Q_T$.

Second, suppose that $\bm \sigma \in \Sigma_T$ such that $\curl \bm \sigma = 0$, then there exists $\bm \phi = [\phi_x, \phi_y,\phi_z] \in [\cQ_{k-1,k,k}, \cQ_{k,k-1,k}, \cQ_{k,k,k-1}]$, such that $\bm \sigma = \grad \bm \phi$. The symmetric property of matrix $\bm \sigma$ implies that $\py \phi_x = \px \phi_y$. Together with the other off-diagonal entries, this leads to that $\curl \bm \phi = 0$, which implies there exists $u \in \cQ_{k,k,k}$ such that $\bm \phi = \grad u$. Hence, it holds that $\bm \sigma = \grad \grad u.$ This is, $\grad \grad U_{T} = \{\bm \sigma \in \bm \Sigma_{T} : \curl \bm \sigma = 0\}.$
A combination of these two facts and the dimension counting indicates the exactness.
\end{proof} 
The above proposition, together its intermediate results, is useful to prove the exactness of the global finite element complexes.

\subsection{\texorpdfstring{$H^2$}{H2} conforming finite element space}
 The $H^2$ conforming finite element space $U_h$ is a generalization of Bogner--Fox--Schmit (BFS) element spaces \cite{schmit1968finite,brenner2007mathematical} in three dimensions, where the shape function space is $\cQ_{k,k,k}$ for $ k \ge 3$. Given $u \in \cQ_{k,k,k}$, the degrees of freedom are defined as follows:

\begin{enumerate}
\item The function value and partial derivatives of $u$ at each vertex $\bm x$ of $T$, \begin{equation}
\label{eq:dof-H2-v}
u(\bm{x}), \px u(\bm{x}), \py u(\bm{x}), \pz u(\bm{x}), \pxy u(\bm{x}), \pxz u(\bm{x}), \pyz u(\bm{x}), \pxyz u(\bm{x}).
\end{equation}
\item
The moments of $u$ and its normal derivatives on each edge $e$, say $e_x$, of $T$,
\begin{equation}
    \label{eq:dof-H2-ex} 
    \int_{e_x} u p ~dx,~~~ \int_{e_x} \py u p ~dx,~~~ \int_{e_x} \pz u p ~dx,~~~ \int_{e_x} \pyz u p ~dx \quad \text{ for } p \in \cQ_{k-4}(x),
\end{equation} 
and the degrees of freedom defined on $e_y$ and $e_z$ are similarly defined, by a cyclic permutation. \footnote{This means  $x \mapsto y, y \mapsto z, z \mapsto x$.}

\item The moments of $u$ and its normal derivatives on each face $F$, say $F_{yz}$, of $T$,
\begin{equation}
    \label{eq:dof-H2-fyz}
     \int_{F_{yz}} u p ~dydz,~~~ \int_{F_{yz}} \px u p ~dydz \quad \text{ for } p \in \cQ_{k-4,k-4}(y,z),
\end{equation} 
and the degrees of freedom defined on the other faces $F_{xz}$ and $F_{xy}$, by a cyclic permutation.

\item The moments of $u$ inside the element $T$, \begin{equation}\label{eq:dof-H2-t}
    \int_{T} up~dxdydz \quad \text{ for } p \in \cQ_{k-4,k-4,k-4}(x,y,z).
\end{equation}

\end{enumerate}

It will be shown that the set of degrees of freedom is unisolvent with respect to the shape function space $\cQ_{k,k,k}$, and that the resulting finite element space is a subspace of $H^2(\Omega)$.
\begin{proposition}
Suppose $k\ge 3$, then the above set of degrees of freedom is unisolvent with respect to the shape function space $\cQ_{k,k,k}$, and the finite element space $U_h$ is $H^2$ conforming.
\end{proposition}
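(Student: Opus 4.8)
The plan is to exploit the tensor-product structure of the element. First I would record the dimension count: with $m := k-3$, the vertex, edge, face, and interior functionals contribute $64$, $48m$, $12m^2$, and $m^3$ degrees of freedom, whose sum is $m^3+12m^2+48m+64=(m+4)^3=(k+1)^3=\dim\cQ_{k,k,k}$. Hence it suffices to prove that the vanishing of all functionals forces $u=0$.

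For unisolvency I would introduce the one-dimensional building block: on $[0,1]$ with shape space $\cQ_k(x)$, take the $C^1$-Hermite functionals $v\mapsto v(0),v'(0),v(1),v'(1)$ together with the interior moments $v\mapsto\int_0^1 vp\,dx$ for $p\in\cQ_{k-4}(x)$, giving $4+(k-3)=k+1$ functionals. A standard argument shows these form a basis of the dual of $\cQ_k(x)$: the four Hermite conditions force a factor $x^2(1-x)^2$, and testing the surviving degree-$(k-4)$ cofactor against itself through the moments makes it vanish. Next I would observe that every listed three-dimensional functional is a tensor product of three such one-dimensional functionals. The number of moment factors (zero, one, two, or three) decides whether the functional is attached to a vertex, an edge, a face, or the interior, while the value-versus-derivative pattern in the remaining nodal directions reproduces exactly the $2^3=8$, $2^2=4$, $2^1=2$, and $2^0=1$ functional types listed in \eqref{eq:dof-H2-v}--\eqref{eq:dof-H2-t}. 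Since the dual of $\cQ_{k,k,k}=\cQ_k\otimes\cQ_k\otimes\cQ_k$ is the tensor product of the one-dimensional duals, the $(k+1)^3$ product functionals are a basis, and unisolvency follows.

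For the $H^2$ conformity I would use that a piecewise polynomial lies in $H^2(\Omega)$ if and only if it is $C^1$ across interelement faces, so it suffices to show that both the trace $u|_F$ and the normal-derivative trace $\px u|_F$ on a shared face, say $F=F_{yz}$, are determined solely by functionals associated with $F$ and its edges and vertices (which are common to the two elements meeting at $F$). Restricting the shape space to $x=\text{const}$ gives $u|_F,\px u|_F\in\cQ_{k,k}(y,z)$. I would then match functionals: for $u|_F$ the relevant data are precisely the two-dimensional BFS degrees of freedom, namely the vertex values $u,\py u,\pz u,\pyz u$, the edge moments of $u$ and of its in-face normal derivative, and the interior moments $\int_F up$; for $\px u|_F$ the analogous two-dimensional BFS data are supplied by the complementary functionals $\px u,\pxy u,\pxz u,\pxyz u$ at the vertices, the edge moments of $\px u$ together with $\pxz u$ (on edges parallel to $y$) or $\pxy u$ (on edges parallel to $z$), and the interior moments $\int_F\px u\,p$. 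By two-dimensional BFS unisolvency both traces are fixed by shared data, so $u$ and $\nabla u$ agree across $F$, giving $U_h\subset H^2(\Omega)$.

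Most of this is bookkeeping once the tensor-product viewpoint is in place; the one genuinely delicate point is the conformity step, where one must confirm that the normal-derivative trace $\px u|_F$ is controlled by face-associated functionals alone. This succeeds only because the vertex set carries all mixed derivatives up to $\pxyz u$ and each edge of $F$ carries moments of the normal derivative $\px u$ together with the associated mixed derivative ($\pxz u$ or $\pxy u$); omitting any of these would leave $\px u|_F$ underdetermined and destroy $C^1$ continuity. I would therefore exhibit the functional matching for $\px u|_F$ explicitly rather than merely assert it.
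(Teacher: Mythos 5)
Your proposal is correct, but the unisolvency argument takes a genuinely different route from the paper. The paper works top-down: it restricts $u$ and $\partial u/\partial\bm n$ to each face, observes that these traces vanish at all degrees of freedom of the two-dimensional BFS element, invokes 2D BFS unisolvency to conclude $u=\partial u/\partial\bm n=0$ on $\partial T$, factors out the bubble $x^2(1-x)^2y^2(1-y)^2z^2(1-z)^2$, and finishes with the interior moments. You instead tensorize all the way down to one dimension: you exhibit the 1D Hermite-plus-moments functionals as a dual basis of $\cQ_k(x)$ and identify every listed degree of freedom as a triple tensor product of such functionals, so that unisolvency follows from the identity $(\cQ_k\otimes\cQ_k\otimes\cQ_k)^*\cong\cQ_k^*\otimes\cQ_k^*\otimes\cQ_k^*$. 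Your route avoids the bubble-factorization step and the appeal to 2D BFS unisolvency as a black box, and it generalizes painlessly to any dimension; the paper's route is more economical in that the face-trace argument does double duty, since the same observation (both traces on a shared face are fixed by degrees of freedom attached to that face, its edges, and its vertices) immediately yields $C^1$ continuity, which is exactly how the paper dispatches conformity in one sentence. Your conformity step is essentially the paper's, just spelled out explicitly, and your explicit matching of the functionals controlling $\partial_x u|_F$ (vertex data $\partial_x u,\partial_{xy}u,\partial_{xz}u,\partial_{xyz}u$, edge moments of $\partial_x u$ and of $\partial_{xz}u$ or $\partial_{xy}u$, and the face moments of $\partial_x u$) is a worthwhile addition, as this is the delicate point. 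One further small merit: your edge count $48(k-3)$ is the correct one; the paper's displayed tally $36\times(k-3)$ is a typo, as only $48(k-3)$ is consistent with the total $(k+1)^3$.
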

\begin{proof}
The dimension of the shape function space is $(k+1)^3$, which is equal to the number of the total degrees of freedom is $(k-3)^3 + 12\times(k-3)^2 + 36\times(k-3)+64 = (k+1)^3$. 
Hence it suffices to prove that if $u \in \cQ_{k,k,k}$ vanishes at all degrees of freedom \eqref{eq:dof-H2-v}, \eqref{eq:dof-H2-ex}, \eqref{eq:dof-H2-fyz}, \eqref{eq:dof-H2-t}, then $u = 0$. Since for each face $F$ with the normal vector $\bm n$, $u|_F, \frac{\partial u}{\partial \bm n}|_F \in \cQ_{k,k}$ vanishes for all the degrees of freedom of the two-dimensional BFS element. Hence, it follows from the unisolvency of the two-dimensional BFS element that $u = \frac{\partial u}{\partial \bm n} = 0$ on the faces of the element $T$.

Therefore, on $T = [0,1]^3$, the standard argument yields that $u = x^2(1-x)^2y^2(1-y)^2z^2(1-z)^2 u_1$ for some $u_1 \in \cQ_{k-4,k-4,k-4}(x,y,z)$. The degrees of freedom inside element $T$ make sure that $u_1 = 0$. This proves the unisolvency, while the $H^2$ continuity is implied by the previous argument.

\end{proof}
Note that the dimension of the space $U_h$ is as follows:
\begin{equation}\dim U_h = 8\mathscr{V} + 4(k-3)\mathscr{E} + 2(k-3)^2\mathscr{F} + (k-3)^3\mathscr{T}.
\end{equation}

\subsection{\texorpdfstring{$\bm H(\curl;\bS)$}{H(curl;S)} conforming finite element space}
\label{sec:HcurlS}
This subsection considers the construction of an $\bm H(\curl;\bS)$ conforming finite element space $\bm{\Sigma}_h$ on $\mathcal T_h$.  
For this element, the shape function space on $T$ is taken as

\begin{equation}
\label{eq:shapefunc-HcurlS}
\begin{bmatrix}
\sigma_{xx} & \sigma_{xy} & \sigma_{xz} \\
\sigma_{yx} & \sigma_{yy} & \sigma_{yz} \\
\sigma_{zx} & \sigma_{zy} & \sigma_{zz}
\end{bmatrix}
\in
\begin{bmatrix}
    \cQ_{k-2,k,k} & \cQ_{k-1,k-1,k} & \cQ_{k-1,k,k-1}\\
    \cQ_{k-1,k-1,k} & \cQ_{k,k-2,k} & \cQ_{k,k-1,k-1} \\
    \cQ_{k-1,k,k-1} & \cQ_{k,k-1,k-1} & \cQ_{k,k,k-2}
\end{bmatrix} =: \bm \Sigma_T.
\end{equation}

The following lemma is useful to prove the $\bm H(\curl; \bS)$ conformity of a symmetric matrix-valued piecewise polynomial on the cuboid mesh $\mathcal T_h$. 
\begin{lemma}[A sufficient condition for $\bm H(\curl; \bS)$ conformity]
    \label{lem:HcurlS-conformity}
    If a symmetric matrix-valued piecewise polynomial $\bm \sigma \in \bS$ satisfies that 
    \begin{enumerate}
        \item $\sigma_{xx}$ is single-valued across all the faces $F_{xy}$ and $F_{xz}$ of $\mathcal T_h$, and a corresponding  condition holds for $\sigma_{yy}$ and $\sigma_{zz}$, with the index changing cyclicly.
        \item All the off-diagonal components, say $\sigma_{xy}$, are single-valued across all the faces of $\mathcal T_h$. 
    \end{enumerate}
    Then $\bm \sigma$ is in $\bm H(\curl, \Omega; \bS)$. 
\end{lemma}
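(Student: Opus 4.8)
The plan is to reduce $\bm H(\curl; \bS)$ conformity to the standard tangential-trace continuity criterion for $H(\curl)$, applied row by row. Since $\curl$ acts row-wise on matrix fields, a symmetric piecewise polynomial $\bm\sigma$ lies in $\bm H(\curl, \Omega; \bS)$ if and only if each of its three rows, regarded as a vector-valued piecewise polynomial, lies in $H(\curl, \Omega)$. Recall that a piecewise polynomial vector field belongs to $H(\curl)$ precisely when its tangential trace is single-valued across every interior face; equivalently, on each face the two components of the vector tangent to that face must be continuous, whereas the normal component is free to jump.

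First I would fix a row, say the first row $\bm\sigma_1 = (\sigma_{xx}, \sigma_{xy}, \sigma_{xz})$, and examine the three face types in turn. On a face $F_{xy}$ (normal to $z$) the tangential directions are $x$ and $y$, so the relevant components are $\sigma_{xx}$ and $\sigma_{xy}$; on $F_{xz}$ (normal to $y$) they are $\sigma_{xx}$ and $\sigma_{xz}$; and on $F_{yz}$ (normal to $x$) they are $\sigma_{xy}$ and $\sigma_{xz}$, while $\sigma_{xx}$ is now the normal component and its continuity is not required. The diagonal entry $\sigma_{xx}$ therefore appears as a tangential component exactly on the faces $F_{xy}$ and $F_{xz}$, which are precisely the faces named in hypothesis (1); the off-diagonal entries $\sigma_{xy}$ and $\sigma_{xz}$ appear as tangential components on various faces but are single-valued on all of them by hypothesis (2). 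Hence the tangential trace of $\bm\sigma_1$ is continuous across every face, and $\bm\sigma_1 \in H(\curl, \Omega)$.

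The remaining two rows are handled identically after the cyclic permutation $x \mapsto y \mapsto z \mapsto x$: the diagonal entry $\sigma_{ii}$ of row $i$ is tangential only on the two faces whose normal is not the $i$-direction, and these are exactly the faces on which hypothesis (1) guarantees its continuity, while all off-diagonal entries are globally continuous by hypothesis (2). Collecting the three rows gives $\bm\sigma \in \bm H(\curl, \Omega; \bS)$.

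There is no deep obstacle here; the content is entirely in the bookkeeping of which entry of a row is tangential on which face. The one point that must be gotten right is the observation that a diagonal entry $\sigma_{ii}$ is a tangential component on only two of the three face types—those whose normal differs from $i$—and is the (jump-tolerant) normal component on the third. This is exactly why hypothesis (1) asks for continuity of $\sigma_{xx}$ on $F_{xy}$ and $F_{xz}$ but not on $F_{yz}$, and matching this asymmetry correctly to the tangential-trace criterion is the crux of the argument.
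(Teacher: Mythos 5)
Your proposal is correct, and it is essentially the argument the paper has in mind: the paper dismisses the proof as ``a straightforward argument,'' and what you have written is precisely that argument made explicit, namely the row-wise application of the standard tangential-trace continuity criterion for $H(\curl)$, with the key bookkeeping observation that the diagonal entry $\sigma_{ii}$ is tangential exactly on the two face types named in hypothesis (1) and normal (hence free to jump) on the third.
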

\begin{proof}

It follows from a straightforward argument. 
\end{proof}

For $\bm \sigma \in \bm{\Sigma}_h$, due to its symmetry, the degrees of freedom will be separated into six parts as follows:$$\sigma_{xx}, \sigma_{yy}, \sigma_{zz}, \sigma_{xy} = \sigma_{yx}, \sigma_{xz} = \sigma_{zx}, \sigma_{yz} = \sigma_{zy}.$$ In what follows, only the degrees of freedom of $\sigma_{xx}$ and $\sigma_{xy}$ will be specified, since those of the remaining four components can be similarly defined via cyclic permutation.

\textbf{The degrees of freedom of $\sigma_{xx}$} are defined as follows:
\begin{enumerate}
    \item The moments of $\sigma_{xx}$ and its  normal derivatives on each edge $e_x$ of $T$, \begin{equation}\label{eq:dof-HcurlS-e}
    \int_{e_x} \sigma_{xx}p~dx, \int_{e_x} \py \sigma_{xx}p~dx, \int_{e_x} \pz \sigma_{xx}p~dx,
    \int_{e_x} \pyz \sigma_{xx}p~dx \text{ for }p \in \cQ_{k-2}(x).
    \end{equation}
    \item The moments of $\sigma_{xx}$ and its normal derivative on each face $F_{xy}$ of $T$,
    \begin{equation}\label{eq:dof-HcurlS-fxy}
    \int_{F_{xy}} \sigma_{xx}p~dxdy,\int_{F_{xy}}\pz \sigma_{xx}p~dxdy \for p \in \cQ_{k-2,k-4}(x,y).
    \end{equation}
    \item The moments of $\sigma_{xx}$ and its normal derivative on each face $F_{xz}$ of $T$,
    \begin{equation}\label{eq:dof-HcurlS-fxz}
         \int_{F_{xz}} \sigma_{xx}p~dxdz,\int_{F_{xz}}\py \sigma_{xx}p~dxdz  \for p \in \cQ_{k-2,k-4}(x,z).\end{equation}
    \item The moments of $\sigma_{xx}$ inside $T$, \begin{equation}\label{eq:dof-HcurlS-t} \int_{T} \sigma_{xx}p ~dxdydz \for p \in \cQ_{k-2,k-4,k-4}(x,y,z).\end{equation}

\end{enumerate}
The degrees of freedom of $\sigma_{yy}, \sigma_{zz}$ can be defined in a similar way as those of $\sigma_{xx}$ by cyclic permutation.

\textbf{The degrees of freedom of $\sigma_{xy}$} are defined as follows:
\begin{enumerate}
    \item The value of $\sigma_{xy}$ and its partial derivative along $z$-direction, at each vertex $\bm x$ of $T$, \begin{equation}\label{eq:dof-HcurlS2-v}\sigma_{xy}(\bm{x}),  \pz\sigma_{xy}(\bm{x}).\end{equation}
    \item The moments of $\sigma_{xy}$ and its normal derivative on each $e_x$ of $T$,
    \begin{equation}\label{eq:dof-HcurlS2-ex}
    \int_{e_x} \sigma_{xy} p ~dx, \int_{e_x} \pz\sigma_{xy} p~dx \for p \in \cQ_{k-3}(x).
    \end{equation}
    \item The moments of $\sigma_{xy}$ and its normal derivative on each $e_y$ of $T$,
     \begin{equation}\label{eq:dof-HcurlS2-ey}\int_{e_y} \sigma_{xy} p ~dy, \int_{e_y} \pz\sigma_{xy} p~dy \for  p \in \cQ_{k-3}(y).\end{equation}
     \item The moments of $\sigma_{xy}$ on each $e_z$ of $T$,
     \begin{equation}\label{eq:dof-HcurlS2-ez}\int_{e_z} \sigma_{xy} p~dx \for  p \in \cQ_{k-4}(z).\end{equation}
     
     \item The moments of $\sigma_{xy}$ and its normal derivatives on each face $F_{xy}$ of $T$,
     \begin{equation}\label{eq:dof-HcurlS2-fxy}\int_{F_{xy}} \sigma_{xy} p ~dxdy, \int_{F_{xy}} \pz \sigma_{xy} p ~dxdy \for p \in \cQ_{k-3,k-3}(x,y).\end{equation}
     \item The moments of $\sigma_{xy}$ on each face $F_{xz}$ of $T$, \begin{equation}\label{eq:dof-HcurlS2-fxz}\int_{F_{xz}} \sigma_{xy} p ~dxdz \for p \in \cQ_{k-3,k-4}(x,z).\end{equation} 
     \item The moments of  $\sigma_{xy}$ on each face $F_{yz}$ of $T$, 
     \begin{equation}\label{eq:dof-HcurlS2-fyz}\int_{F_{yz}} \sigma_{xy} p~dydz \for p \in \cQ_{k-3,k-4}(y,z).\end{equation}
     \item The moments of $\sigma_{xy}$ inside the element $T$, \begin{equation}\label{eq:dof-HcurlS2-t}\int_T \sigma_{xy} p ~dxdydz \for p \in \cQ_{k-3,k-3,k-4}(x,y,z).\end{equation}
\end{enumerate}
The degrees of freedom of $\sigma_{yz}$ and $\sigma_{zx}$ are similar those of $\sigma_{xy}$, by cyclic permutation. 
 
The following proposition shows that the set of degrees of freedom is unisolvent and the resulting finite element space is $\bm H(\curl;\bS)$  conforming.
\begin{proposition}
\label{prop:uni-HcurlS}
When $k \ge 3$, the degrees of freedom defined in \eqref{eq:dof-HcurlS-e}-\eqref{eq:dof-HcurlS2-t} are unisolvent for the shape function space $\bm \Sigma_T$ in \eqref{eq:shapefunc-HcurlS}, and the resulting finite element space $\bm \Sigma_h$ is $\bm H(\curl; \bS)$ conforming.
\end{proposition}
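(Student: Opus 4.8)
The plan is to prove unisolvency by the standard dimension-count-plus-vanishing argument, handling each of the six component groups separately, and then to deduce $\bm H(\curl;\bS)$ conformity from \Cref{lem:HcurlS-conformity} by checking that the chosen degrees of freedom control exactly the right traces.

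First I would verify the dimension count. Since the degrees of freedom are grouped into the three diagonal components $\sigma_{xx},\sigma_{yy},\sigma_{zz}$ and the three off-diagonal components $\sigma_{xy},\sigma_{yz},\sigma_{zx}$, and since the shape function space $\bm\Sigma_T$ in \eqref{eq:shapefunc-HcurlS} splits correspondingly, I would count the dimension of each scalar polynomial space and match it against the number of degrees of freedom attached to that component. For $\sigma_{xx}\in\cQ_{k-2,k,k}$ one sums the edge contributions \eqref{eq:dof-HcurlS-e}, the two types of face contributions \eqref{eq:dof-HcurlS-fxy}--\eqref{eq:dof-HcurlS-fxz}, and the interior contribution \eqref{eq:dof-HcurlS-t}; for $\sigma_{xy}\in\cQ_{k-1,k-1,k}$ one sums the vertex, edge, face, and interior contributions \eqref{eq:dof-HcurlS2-v}--\eqref{eq:dof-HcurlS2-t}. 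This is routine bookkeeping, so I would simply assert that the totals agree with $\dim\cQ_{k-2,k,k}=(k-1)(k+1)^2$ and $\dim\cQ_{k-1,k-1,k}=k^2(k+1)$ respectively.

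Next, assuming all degrees of freedom vanish, I would show each component is zero. For $\sigma_{xx}$, the structure mimics the $H^2$/BFS argument: the edge moments \eqref{eq:dof-HcurlS-e} (controlling $\sigma_{xx}$ and its tangential-to-the-edge normal derivatives along $e_x$) together with the face moments \eqref{eq:dof-HcurlS-fxy}--\eqref{eq:dof-HcurlS-fxz} force $\sigma_{xx}$ and its relevant normal derivative to vanish on the faces $F_{xy}$ and $F_{xz}$; since $\sigma_{xx}\in\cQ_{k-2,k,k}$ has degree $k$ in both $y$ and $z$, one factors out $y^2(1-y)^2 z^2(1-z)^2$, and the interior moments \eqref{eq:dof-HcurlS-t} then kill the quotient. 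For $\sigma_{xy}\in\cQ_{k-1,k-1,k}$, which carries only $C^0$-type regularity in $x,y$ but $H^1$-type in $z$, I would argue analogously: the vertex values and edge moments \eqref{eq:dof-HcurlS2-v}--\eqref{eq:dof-HcurlS2-ez} together with the face moments \eqref{eq:dof-HcurlS2-fxy}--\eqref{eq:dof-HcurlS2-fyz} force $\sigma_{xy}$ (and $\pz\sigma_{xy}$ where relevant) to vanish on all six faces, after which the factorization $x(1-x)\,y(1-y)\,z^2(1-z)^2$ plus the interior moments \eqref{eq:dof-HcurlS2-t} completes the vanishing.

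For the conformity claim I would invoke \Cref{lem:HcurlS-conformity}. It suffices to check that the shared degrees of freedom on a face determine the trace of each component from that face. For the diagonal part, the degrees of freedom of $\sigma_{xx}$ on $F_{xy}$ and $F_{xz}$ (its trace moments and the tangential vertex/edge data embedded in \eqref{eq:dof-HcurlS-e}--\eqref{eq:dof-HcurlS-fxy}--\eqref{eq:dof-HcurlS-fxz}) are intrinsic to those faces, so $\sigma_{xx}$ is single-valued across them, matching condition (1). For the off-diagonal part, the full set of vertex, edge, and face data for $\sigma_{xy}$ restricts on any face to a unisolvent set for the two-dimensional trace polynomial, giving single-valuedness across all faces, which is condition (2). \textbf{The main obstacle I anticipate} is the off-diagonal vanishing step: because $\sigma_{xy}$ has mixed regularity (only the $z$-direction carries normal-derivative data, reflecting that $e_z$ edges need no $\pz$ moments in \eqref{eq:dof-HcurlS2-ez}), one must track carefully which vertex, edge, and face moments are assigned in each direction so that the correct boundary factor is extracted before applying the interior moments — getting the asymmetric degree pattern $\cQ_{k-1,k-1,k}$ to align with the asymmetric degrees of freedom is where the bookkeeping is most delicate.
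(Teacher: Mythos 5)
Your overall strategy --- a component-by-component dimension count, then a vanishing argument running from edges to faces to an interior bubble factorization, then conformity via \Cref{lem:HcurlS-conformity} by checking that shared degrees of freedom determine face traces --- is exactly the paper's approach, and for $k \ge 4$ your outline is sound. The genuine gap is the borderline case $k=3$, which the proposition explicitly includes and which the paper treats as a separate case. When $k=3$, most of the degrees of freedom your argument leans on are empty: the face moments \eqref{eq:dof-HcurlS-fxy}--\eqref{eq:dof-HcurlS-fxz} test against $\cQ_{k-2,k-4}=\cQ_{1,-1}$, a null space by the paper's convention on negative indices; likewise the interior moments \eqref{eq:dof-HcurlS-t} for $\sigma_{xx}$, and the $e_z$ moments \eqref{eq:dof-HcurlS2-ez}, the face moments \eqref{eq:dof-HcurlS2-fxz}--\eqref{eq:dof-HcurlS2-fyz} and the interior moments \eqref{eq:dof-HcurlS2-t} for $\sigma_{xy}$, are all vacuous. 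So your key step, ``the face moments force $\sigma_{xx}$ and its relevant normal derivative to vanish on the faces,'' and the corresponding step in the off-diagonal chain, cite conditions that simply do not exist at $k=3$; as written, the proposition is unproven for that value of $k$.

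The repair needs a different mechanism, which is precisely why the paper splits into $k=3$ and $k\ge 4$. For $\sigma_{xx}$ with $k=3$: the edge data \eqref{eq:dof-HcurlS-e} make $\sigma_{xx}$, $\frac{\partial \sigma_{xx}}{\partial y}$, $\frac{\partial \sigma_{xx}}{\partial z}$, $\frac{\partial^2 \sigma_{xx}}{\partial y \partial z}$ vanish on every edge $e_x$; the paper then observes that on a face normal to the $x$-direction the trace lies in $\cQ_{3,3}(y,z)$ and vanishes at all sixteen degrees of freedom of the two-dimensional BFS element, hence is zero, and linearity of $\sigma_{xx}\in\cQ_{1,3,3}$ in $x$ finishes the argument. (Equivalently, a degree argument works: $\sigma_{xx}|_{F_{xy}}$ vanishes to second order at $y=0,1$, so $y^2(1-y)^2$, of degree $4$, would have to divide a polynomial of degree $3$ in $y$.) Similarly, for $\sigma_{xy}$ with $k=3$ the paper derives $\sigma_{xy}=z^2(1-z)^2\sigma_2$ and concludes $\sigma_{xy}=0$ because $\sigma_{xy}\in\cQ_{2,2,3}$ has degree only $3$ in $z$, not from any interior moments. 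Your proof should either include this degenerate-case analysis or explain why the factorization collapses on its own when the quotient spaces are null; without that, the case $k=3$ is a hole, not mere bookkeeping.
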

\begin{proof}
The unisolvency of diagonal entries and off-diagonal entries will be separately considered. It suffices to show the unisolvency of $\sigma_{xx}$ and $\sigma_{xy}$. 
\paragraph{Unisolvency of $\sigma_{xx}$}
The number of degrees of freedom defined for $\sigma_{xx}$ is 
\begin{equation}
    (k-1)(k-3)^2 + 8(k-1)(k-3) + 16(k-1) = (k-1)(k-3+4)^2 = (k-1)(k+1)^2,
\end{equation}
which is equal to the dimension of $\cQ_{k-2,k,k}$.
It remains to show that if $\sigma_{xx} \in \cQ_{k-2,k,k}$ vanishes for these degrees of freedom, then $\sigma_{xx} = 0$. 
Since 
$$\displaystyle \sigma_{xx}|_{e_x},\frac{\partial \sigma_{xx}}{\partial y}|_{e_x},\frac{\partial \sigma_{xx}}{\partial z}|_{e_x},\frac{\partial^2 \sigma_{xx}}{\partial y\partial z}|_{e_x} \in \cQ_{k-2}(x),$$ 
the degrees of freedom in \eqref{eq:dof-HcurlS-e} imply that they vanish on $e_{x}$. When $k = 3$, consider any face, say $F_x$, normal to $x$-direction. 
The restriction $\sigma_{xx}|_{F_x}$ of $\sigma_{xx}$ on such a face is in $\cQ_{k,k}(y,z)$, 
and vanishes at each degree of freedom of the two-dimensional BFS element, hence it can be concluded that $\sigma_{xx} = 0$.

Now consider the case $k \ge 4$. Since $\sigma_{xx}|_{F_{xy}}$ and $\displaystyle \pz \sigma_{xx}|_{F_{xy}}$ are in $\cQ_{k-2,k-4}$, the degrees of freedom in \eqref{eq:dof-HcurlS-fxy} show that $\sigma_{xx},\frac{\partial \sigma_{xx}}{\partial z}$ vanish on $F_{xy}$. A similar argument shows that $u,\frac{\partial \sigma_{xx}}{\partial y}$ vanish on $F_{xz}$.
It follows that $\sigma_{xx} = y^2(1-y)^2z^2(1-z)^2\sigma_1$ with $\sigma_1\in \cQ_{k-2,k-4,k-4}$. Finally, the degrees of freedom in \eqref{eq:dof-HcurlS-t} imply that $\sigma_{xx}=0$.

\paragraph{Unisolvency of $\sigma_{xy}$}
Now it turns to show the unisolvency of $\sigma_{xy}$. The number of the degrees of freedom in \eqref{eq:dof-HcurlS2-v}--\eqref{eq:dof-HcurlS2-t} is
\begin{equation}
    16 + 16(k-2) + 4(k-3) + 4(k-2)^2 + 4(k-2)(k-3) + (k-2)^2(k-3) = k^2(k+1),
\end{equation}
which is equal to the dimension of the space $\cQ_{k-1,k-1,k}$. 
It suffices to show that for any polynomial $ \sigma_{xy} \in \cQ_{k-1,k-1,k}$, it vanishes at all the degrees of freedom if and only if $\sigma_{xy} =0$ on element $T$. It follows from the degrees of freedom defined in \eqref{eq:dof-HcurlS2-v}, \eqref{eq:dof-HcurlS2-ex}, \eqref{eq:dof-HcurlS2-ey} that $\sigma_{xy}$ and $\frac{\partial \sigma_{xy}}{\partial z}$ vanish on all $e_{x},e_{y}$. 
When $k \ge 4$, it follows from \eqref{eq:dof-HcurlS2-v} and \eqref{eq:dof-HcurlS2-ez} that $\sigma_{xy}$ vanishes on all the edges $e_{z}$. It then follows from \eqref{eq:dof-HcurlS2-fxy} that $\sigma_{xy}$ and $\frac{\partial \sigma_{xy}}{\partial z}$ vanish on face $F_{xy}$, and from \eqref{eq:dof-HcurlS2-fxz} and \eqref{eq:dof-HcurlS2-fyz} that $\sigma_{xy}$ vanishes on the faces $F_{yz}$ and $F_{xz}$. 
 It indicates that $$\sigma_{xy}= z^2(1-z)^2x(1-x)y(1-y)\sigma_2,\quad \text{with}\quad \sigma_{2}\in\cQ_{k-3,k-3,k-4}.$$
 Finally, by \eqref{eq:dof-HcurlS2-t} it indicates that $\sigma_{xy}=0$. 
 
 When $k = 3$, a similar argument shows that $\sigma_{xy} = z^2(1-z)^2 \sigma_2$ for some polynomial $\sigma_{2}$. Since $\sigma_{xy} \in \cQ_{2,2,3}$ it can be concluded that $\sigma_{xy} = 0$. 

The $\bm H(\curl; \bS)$ conformity of $\bm \sigma$ is already implied by the previous proof, and \Cref{lem:HcurlS-conformity}.
\end{proof}
The dimension of $\bm{\Sigma}_{h}$ is 
\begin{equation*}
\begin{aligned}
\dim \bm{\Sigma}_h = &[4(k-1)\mathscr{E} + 4(k-1)(k-3)\mathscr{F} + 3(k-1)(k-3)^2\mathscr{T}]\\
&+[6\mathscr{V} + 4(k-2)\mathscr{E} + (k-3)\mathscr{E} + 2(k-2)(2k-5)\mathscr{F}
+ 3(k-2)^2(k-3)\mathscr{T}].
\end{aligned}
\end{equation*}

\subsection{\texorpdfstring{$\bm H(\div;\bT)$}{H(div;T)} conforming finite element space}
\label{sec:HdivT}
This subsection considers the construction of an $\bm H(\div;\bT)$ conforming space $\bm{\Xi}_h$. For this element, the shape function space on $T$ is 
\begin{equation}
\label{eq:shapefunc-HdivT}
\begin{bmatrix}
    \tau_{xx} & \tau_{xy} & \tau_{xz} \\ \tau_{yx} & \tau_{yy} & \tau_{yz} \\ \tau_{zx} & \tau_{zy} & \tau_{zz}
\end{bmatrix}
\in 
\begin{bmatrix}
    \cQ_{k-1,k-1,k-1} & \cQ_{k-2,k,k-1} &\cQ_{k-2,k-1,k} \\
    \cQ_{k,k-2,k-1} & \cQ_{k-1,k-1,k-1} & \cQ_{k-1,k-2,k} \\
    \cQ_{k,k-1,k-2} & \cQ_{k-1,k,k-2} &\cQ_{k-1,k-1,k-1}
\end{bmatrix} =: \Xi_T.
\end{equation}

Like those of the space $\bm \Sigma_T$, the degrees of freedom of $\bm \tau$ are componentwisely defined. Only the construction of the first row $\bm{\tau_x}$ is given, since the degrees of freedom of the remaining components can be defined in a similar way.

\textbf{The degrees of freedom on $\tau_{xx}$} are defined as follows:
\begin{enumerate}
    \item The value of $\tau_{xx}$ at each vertex $\bm{x}$ of $T$, namely, $\tau_{xx}(\bm x)$.
    \item The moments of $\tau_{xx}$ on each edge $e$ of $T$, \begin{equation}\label{eq:dof-HdivT-e}\int_{e} \tau_{xx} p ~dl \for p \in \cQ_{k-3}.\end{equation}
    \item The moments of $\tau_{xx}$ on each face $F$ of $T$, \begin{equation}\label{eq:dof-HdivT-f} \int_{F} \tau_{xx} p ~ds \for p \in \cQ_{k-3,k-3}.\end{equation}
     \item The moments of $\tau_{xx}$ inside $T$, \begin{equation}\label{eq:dof-HdivT-t}\int_T \tau_{xx}p~dxdydzfor  \for p \in \cQ_{k-3,k-3,k-3}(x,y,z).\end{equation}
\end{enumerate}
The degrees of freedom of $\tau_{yy}$ and $\tau_{zz}$ can be similarly defined, by a cyclic permutation. 

\begin{remark}
Notice that since the matrix-valued piecewise polynomial $\bm \tau$ is traceless, only two of components $\tau_{xx}, \tau_{yy}$ and $\tau_{zz}$ should be given.
\end{remark}

\textbf{The degrees of freedom of $\tau_{xy}$} are defined as follows:
\begin{enumerate}
    \item The moments of $\tau_{xy}$ and $\py \tau_{xy}$ on each edge $e_x$ of $T$, \begin{equation}\label{eq:dof-HdivT2-ex}\int_{e_x}\tau_{xy}p~dx, ~~~~\int_{e_x}\py \tau_{xy}p ~dx \for p \in \cQ_{k-2}(x).\end{equation}
    \item The moments of $\tau_{xy}$ on each face $F_{xy}$ of $T$, \begin{equation}\label{eq:dof-HdivT2-fxy}\int_{F_{xy}}\tau_{xy}p~dxdy\quad \text{for} ~~p \in \cQ_{k-2,k-4}(x,y).\end{equation} 
    \item The moments of $\tau_{xy}$ and $\py \tau_{xy}$ on each face $F_{xz}$ of $T$, \begin{equation}\label{eq:dof-HdivT2-fxz}\int_{F_{xz}}\tau_{xy}p~dxdz, ~~~~\int_{F_{xz}}\py \tau_{xy}p~dxdy \for p \in \cQ_{k-2,k-3}(x,z).\end{equation}
    \item The moments of $\tau_{xy}$ inside the element $T$, \begin{equation}\label{eq:dof-HdivT2-t}\int_T \tau_{xy}p~dxdy \for p \in \cQ_{k-2,k-4,k-3}(x,y,z).\end{equation}
\end{enumerate}
The degrees of freedom of the remaining five components can be similarly defined via permutation.

The next proposition shows the unisolvency of the degrees of freedom with respect to the shape function spasce, and $\bm H(\div;\mathbb T)$ conformity of the corresponding finite element space.
\begin{proposition}
Suppose $k\ge 3$, the degrees of freedom defined above are unisolvent for the shape function defined in \eqref{eq:shapefunc-HdivT}, and the resulting finite element space $\bm \Xi_h$ is $\bm H(\div; \bT)$ conforming.
\end{proposition}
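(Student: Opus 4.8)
The plan is to follow the template of the two preceding propositions: first match the local dimension counts, then prove componentwise unisolvency, and finally read off conformity from the trace analysis. Since the shape functions are traceless, the Remark shows that only two of the three diagonal blocks are independent ($\tau_{zz}=-\tau_{xx}-\tau_{yy}$), so it suffices to verify unisolvency for one diagonal representative $\tau_{xx}$ and one off-diagonal representative $\tau_{xy}$; every other component follows by the cyclic permutation $x\mapsto y\mapsto z\mapsto x$ together with the reflection exchanging a component and its transpose. First I would confirm the two dimension identities: the DoFs \eqref{eq:dof-HdivT-e}--\eqref{eq:dof-HdivT-t} number $8+12(k-2)+6(k-2)^2+(k-2)^3=k^3=\dim\cQ_{k-1,k-1,k-1}$, while the DoFs \eqref{eq:dof-HdivT2-ex}--\eqref{eq:dof-HdivT2-t} number $(k-1)k(k+1)=k^3-k=\dim\cQ_{k-2,k,k-1}$.

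For the diagonal block the DoFs \eqref{eq:dof-HdivT-e}--\eqref{eq:dof-HdivT-t} are precisely the nodal/moment degrees of freedom of the $\cQ_{k-1}$ Lagrange element, so unisolvency is classical and the block is automatically $C^0$. The substance is $\tau_{xy}\in\cQ_{k-2,k,k-1}$, for which I would argue by successive factoring. Assuming all DoFs vanish: the edge moments \eqref{eq:dof-HdivT2-ex} force $\tau_{xy}$ and $\py\tau_{xy}$ to vanish on every edge $e_x$. On each face $F_{xz}$ ($y=0,1$) the trace and its normal derivative then vanish on the two bounding $e_x$ edges (the lines $z=0,1$), so each is divisible by $z(1-z)$, and pairing the quotient against itself in \eqref{eq:dof-HdivT2-fxz} forces $\tau_{xy}=\py\tau_{xy}=0$ on both $F_{xz}$. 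Hence $\tau_{xy}$ has a double root in $y$ at $y=0,1$, giving $\tau_{xy}=y^2(1-y)^2 h$ with $h\in\cQ_{k-2,k-4,k-1}$; testing \eqref{eq:dof-HdivT2-fxy} against $h|_{z=0,1}$ forces $h$ to vanish at $z=0,1$, so $\tau_{xy}=y^2(1-y)^2 z(1-z) j$ with $j\in\cQ_{k-2,k-4,k-3}$, and finally \eqref{eq:dof-HdivT2-t} with $p=j$ forces $j=0$. For $k=3$ the space $\cQ_{k-2,k-4,k-1}$ is trivial, so $\tau_{xy}=0$ already after the $F_{xz}$ step and the last two families are vacuous, making the argument uniform in $k$.

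For conformity I would use that $\bm\tau\in\bm H(\div;\bT)$ iff $\bm\tau$ is traceless (built into the shape functions by the Remark) and each row has single-valued normal component across faces; concretely $\tau_{xx}$ must match across $F_{yz}$, $\tau_{xy}$ across $F_{xz}$, and $\tau_{xz}$ across $F_{xy}$. The diagonal case is immediate from $C^0$-continuity of the Lagrange block. For $\tau_{xy}$ across $F_{xz}$, the relevant trace lies in $\cQ_{k-2,k-1}(x,z)$, of dimension $(k-1)k$, and the only DoFs supported on $F_{xz}$ and its boundary are the $x$-moments on the two $e_x$ edges and the face moments \eqref{eq:dof-HdivT2-fxz}; these are exactly the data shared by the two elements abutting $F_{xz}$, and the same $z(1-z)$-factoring shows they are unisolvent for that trace space. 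Thus the normal trace is fixed by shared data and is single-valued, and the remaining off-diagonals follow by permutation.

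The main obstacle is this last trace-unisolvency check: the off-diagonal DoFs carry no moments on the $e_z$ edges of $F_{xz}$, so it is not a priori clear that the shared DoFs determine a trace of full $z$-degree $k-1$. The point to verify carefully is that the two $e_x$ edges (supplying the endpoint data at $z=0,1$) together with the interior face moments, of $z$-degree only up to $k-3$, recover the $(k-1)k$-dimensional trace exactly -- equivalently, that the bookkeeping among the bulk space $\cQ_{k-2,k,k-1}$, its $F_{xz}$-trace $\cQ_{k-2,k-1}$, and the advertised DoFs is internally consistent. Once this is settled for $\tau_{xy}$, the other components and the global assembly follow mechanically.
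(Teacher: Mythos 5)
Your proposal is correct and follows essentially the same route as the paper's proof: match the local dimension counts, treat the diagonal block as a classical Lagrange element, kill the off-diagonal component $\tau_{xy}$ by successive factoring ($z(1-z)$ on $F_{xz}$, then $y^2(1-y)^2$, then $z(1-z)$, then interior moments), and obtain $\bm H(\div;\bT)$ conformity from single-valuedness of the traces $\tau_{xx}$ on $F_{yz}$ and $\tau_{xy}$ on $F_{xz}$. The only differences are cosmetic: you handle $k=3$ uniformly via the null-space convention for negative indices where the paper runs a separate case (and your worry in the final paragraph is already resolved by your own count $2(k-1)+(k-1)(k-2)=(k-1)k=\dim\cQ_{k-2,k-1}$ together with the $z(1-z)$-factoring), and you spell out the trace-unisolvency argument for conformity that the paper states in one line.
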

\begin{proof}

It suffices to prove that $\tau_{xx}$ and $\tau_{xy}$ are unisolvent, since the proof of the remaining components is similar. For the diagonal part, it is the classical Lagrange element. 

\paragraph{Unisolvency of $\tau_{xy}$}
Next, it suffices to prove the unisolvency of $\tau_{xy}$. The number of degrees of freedom of $\tau_{xy}$ is equal to the dimension of the shape function space $\cQ_{k-2,k,k-1}$, that is,
\begin{equation}
        8(k-1) + 2(k-1)(k-3) + 4(k-1)(k-2)+(k-1)(k-3)(k-2) 
        = (k-1)k(k+1) = \operatorname{dim}\cQ_{k-2,k,k-1}.
\end{equation}
For any $ \tau_{xy} \in \cQ_{k-2,k,k-1}$, it can be shown that it vanishes at all degrees of freedom if and only if $\tau_{xy}=0$.

\begin{enumerate}
    \item When $k \ge 4$: From \eqref{eq:dof-HdivT2-ex}, $\tau_{xy}$ and $\frac{\partial \tau_{xy}}{\partial y}$ vanish at all edges $e_{x}$. From \eqref{eq:dof-HdivT2-fxy} and \eqref{eq:dof-HdivT2-fxz}, $\tau_{xy}$ vanishes on face $F_{xy},F_{xz}$,
    Similarly, one can show that $\frac{\partial \tau_{xy}}{\partial y}$ vanishes on $F_{xz}$. It follows that $\tau_{xy} = y^2(1-y)^2z(1-z)\tau_1$ with $\tau_1\in \cQ_{k-2,k-4,k-3}$. Then \eqref{eq:dof-HdivT2-t} implies that $\tau_{xy}=0$. This completes the proof.
    \item  When $k = 3$: From \eqref{eq:dof-HdivT2-ex}, $\tau_{xy}$ and $\py \tau_{xy}$ vanish at all edges $e_x$. Then on $F_{xz}$, it follows that $\tau_{xy} = z(1-z)\tau_1$ for some $\tau_1 \in \cQ_{1,0}(x,z)$. It follows from \eqref{eq:dof-HdivT2-fxz} that $\tau_{xy}$ vanishes on $F_{xz}$. Similarly it can deduced that $\py \tau_{xy}$ vanishes on $F_{xz}$. Hence $\tau_{xy} = y^2(1-y)^2\tau_1$ for some polynomial $\tau_1$ and since $\tau \in \cQ_{2,3,1}(x,y,z)$, yielding that $\tau_{xy} = 0$, which completes the proof.
\end{enumerate}

The $\bm H(\div; \bT)$ conformity comes from the fact that $\tau_{xy}$ is single-valued on the faces $F_{xz}, F_{xy}$ (and the similar results on the remaining off-diagonal entries holds) and the diagonal entries (e.g. $\tau_{xx}$) is continuous.

\end{proof}
The dimension of $\bm{\Xi}_{h}$ is
\begin{equation}
\begin{aligned}
& \dim \bm \Xi_h = [2\mathscr{V} + 2(k-2)\mathscr{E} + 2(k-2)^2\mathscr{F} +2(k-2)^3\mathscr{T}]\\
& + [4(k-1)\mathscr{E} + 2(k-1)(k-3)\mathscr{F} + 4(k-1)(k-2)\mathscr{F} + 6(k-1)(k-2)(k-3)\mathscr{T}].
\end{aligned}
\end{equation}

\subsection{\texorpdfstring{$\bm L^2$}{L2} finite element space}

This subsection considers an $\bm L^2$ conforming vector-valued finite element space $\bm Q_h$. For this element, the shape function on $T$ is
\begin{equation}
    \label{eq:shapefunc-L2V}
\begin{bmatrix} q_x \\ q_y \\ q_z \end{bmatrix} \in
\begin{bmatrix}
    \cQ_{k-2,k-1,k-1} \\ \cQ_{k-1,k-2,k-1} \\\cQ_{k-1,k-1,k-2} \\ 
\end{bmatrix}=: \bm Q_T.
\end{equation}
Given $\bm q \in \bm Q_h$, the degrees of freedom $q_x$ are defined as follows: 

\begin{enumerate}
\item The moments of $q_x$ on each edge $e_x$ of $T$,
\begin{equation}\label{eq:dof-L2V-ex}\int_{e_x}q_xp~dx  \for p \in \cQ_{k-2}(x).\end{equation}
\item The moments of $q_x$ on each face $F_{xy}$ of $T$,
\begin{equation}\label{eq:dof-L2V-fxy}\int_{F_{xy}}q_xp~dxdy \for p \in \cQ_{k-2,k-3}(x,y).\end{equation} 
\item The moments of $q_x$ on each face $F_{xz}$ of $T$, 
\begin{equation}\label{eq:dof-L2V-fxz}\int_{F_{xz}}q_xp~dxdz \for p \in \cQ_{k-2,k-3}(x,z). \end{equation} 
\item The moments of $q_x$ inside the element $T$,
\begin{equation}\label{eq:dof-L2V-t}\int_T q_xp~dx \for p \in \cQ_{k-2,k-3,k-3}(x,y,z).\end{equation}
\end{enumerate}
The degrees of freedom of $q_y$ and $q_z$ can be defined similarly as those of $q_x$ by permutation. 
\begin{proposition}
    Suppose that $k \ge 3$, the degrees of freedom defined above are unisolvent with respect to the shape function space \eqref{eq:shapefunc-L2V}. 
\end{proposition}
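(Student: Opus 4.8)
The plan is to reuse the template of the preceding unisolvency proofs: first check that the number of degrees of freedom equals $\dim \bm Q_T$, so that unisolvency is equivalent to showing a shape function killed by all functionals must vanish. Since the three components $q_x,q_y,q_z$ are decoupled and their functionals differ only by cyclic permutation, it suffices to treat $q_x \in \cQ_{k-2,k-1,k-1}$. Counting the four groups of functionals \eqref{eq:dof-L2V-ex}, \eqref{eq:dof-L2V-fxy}, \eqref{eq:dof-L2V-fxz}, \eqref{eq:dof-L2V-t} gives
\[
4(k-1) + 2(k-1)(k-2) + 2(k-1)(k-2) + (k-1)(k-2)^2 = (k-1)\big[(k-2)+2\big]^2 = k^2(k-1),
\]
which matches $\dim \cQ_{k-2,k-1,k-1}$, so the dimensions agree.

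Next I would run the standard bubble-function-plus-moment elimination. Assume $q_x$ annihilates all its functionals. On each edge $e_x$ the trace $q_x|_{e_x}\in\cQ_{k-2}(x)$, so \eqref{eq:dof-L2V-ex} against all of $\cQ_{k-2}(x)$ forces $q_x=0$ on every $e_x$. On a face $F_{xy}$ one has $q_x|_{F_{xy}}\in\cQ_{k-2,k-1}(x,y)$, and it already vanishes on the two edges $e_x$ of that face at $y=0,1$; hence $q_x|_{F_{xy}} = y(1-y)\,r$ with $r\in\cQ_{k-2,k-3}(x,y)$, and testing \eqref{eq:dof-L2V-fxy} with $p=r$ gives $\int_{F_{xy}} y(1-y)r^2\,dxdy=0$, so $r=0$ and $q_x=0$ on $F_{xy}$. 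The identical step applied to \eqref{eq:dof-L2V-fxz} yields $q_x=0$ on each $F_{xz}$. Thus $q_x$ vanishes on $\{y=0,1\}\cup\{z=0,1\}$, so $q_x = y(1-y)z(1-z)\,s$ with $s\in\cQ_{k-2,k-3,k-3}(x,y,z)$; finally \eqref{eq:dof-L2V-t} with $p=s$ gives $\int_T y(1-y)z(1-z)s^2\,dxdydz=0$, forcing $s=0$ and $q_x\equiv0$. By cyclic symmetry the same holds for $q_y,q_z$, establishing unisolvency.

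I expect no genuine obstacle here; the only delicate point is the degree bookkeeping, namely that the polynomial degrees surviving after factoring out the boundary bubbles coincide exactly with the test spaces. Factoring $y(1-y)$ out of a $\cQ_{k-2,k-1}$ face trace leaves degree $k-3$ in $y$, matching $\cQ_{k-2,k-3}$, and factoring $y(1-y)z(1-z)$ out of $\cQ_{k-2,k-1,k-1}$ leaves $\cQ_{k-2,k-3,k-3}$, matching the interior test space; this exact matching is what lets the positivity trick close each step. Unlike the propositions for $\bm\Sigma_h$ and $\bm\Xi_h$, no conformity claim accompanies this statement, since $\bm Q_h$ is only required to be $\bm L^2$ conforming, so the proof terminates once unisolvency is established. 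I would also remark that, in contrast to the earlier diagonal-component arguments, no separate treatment of the low-order case $k=3$ is needed, because all index reductions above remain nonnegative for every $k\ge 3$.
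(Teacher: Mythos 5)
Your proof is correct and follows essentially the same route as the paper's: the identical dimension count, vanishing on the edges $e_x$, factoring $y(1-y)$ and $z(1-z)$ off the face and interior traces, and eliminating the remaining factor with the interior moments. Your explicit positivity argument (testing against $p=r$ and $p=s$) just spells out the step the paper leaves implicit, so there is no substantive difference.
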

\begin{proof}
Here consider the unisolvency of $q_x$ only.
The number of degrees of freedom is equal to the dimension of the shape function space $\cQ_{k-2,k-1,k-1}$, namely,
\begin{equation}
4(k-1) + 2(k-1)(k-2) + 2(k-1)(k-2) + (k-1)(k-2)^2= (k-1)k^2.
\end{equation}
Hence it suffices to prove that for $q_x \in \cQ_{k-2,k-1,k-1}$, vanishing at all the degrees of freedom defined above, then $q_x = 0$.
Since $q_x|_{e_x} \in \cQ_{k-2}$ the degrees of freedom in  \eqref{eq:dof-L2V-ex} indicates that $q_x|_{e_x} = 0$. Restricting $q_x$ on $F_{xy}$, it can be deduced that $q_x|_{F_{xy}} = q_1y(1-y)$ for some $q_1 \in \cQ_{k-2,k-3}(x,y).$ Hence by \eqref{eq:dof-L2V-fxy}, $q_x|_{F_{xy}} = 0$. Similarly by \eqref{eq:dof-L2V-fxz}, $q_x|_{F_{xz}} = 0$. Hence $q_x = q_2y(1-y)z(1-z)$ for some $q_2 \in \cQ_{k-2,k-3,k-3}(x,y,z)$, which leads to $q_x = 0$ by \eqref{eq:dof-L2V-t}.
\end{proof}

The dimension of $\bm{Q}_h$ is 
\begin{equation}
    \dim \bm Q_h = 
(k-1)\mathscr{E} + 2(k-1)(k-2)\mathscr{F} + 3(k-1)(k-2)^2 \mathscr{T}.
\end{equation}

\subsection{Finite element complex and its exactness}
This section proves that the following finite element sequence  
\begin{equation}
\label{eq:complex-gradgrad-D}
    \cP_1 \stackrel{\subset}\longrightarrow U_h \stackrel{\grad \grad}{\longrightarrow} \bm{\Sigma}_h \stackrel{\curl}{\longrightarrow} \bm{\Xi}_h \stackrel{\div}{\longrightarrow} \bm{Q}_h \longrightarrow 0
\end{equation}
is an exact complex. Since there hold the following dimensions of these spaces $U_h$, $\bm \Sigma_h$, $\bm \Xi_h$ and $\bm Q_h$ defined in the previous subsections, 
\begin{equation*}
\begin{aligned}
 \dim U_h = &  8\mathscr{V} + 4(k-3)\mathscr{E} + 2(k-3)^2\mathscr{F} + (k-3)^3\mathscr{T},\\
    \dim \bm{\Sigma}_h = &[4(k-1)\mathscr{E} + 4(k-1)(k-3)\mathscr{F} + 3(k-1)(k-3)^2\mathscr{T}],\\
    &+[6\mathscr{V} + 4(k-2)\mathscr{E} + (k-3)\mathscr{E} + 2(k-2)^2\mathscr{F} + 2(k-2)(k-3)\mathscr{F} + 3(k-2)^2(k-3)\mathscr{T}],
\\
       \dim \bm{\Xi}_h =  &[2\mathscr{V} + 2(k-2)\mathscr{E} + 2(k-2)^2\mathscr{F} +2(k-2)^3\mathscr{T}]\\
        & + [4(k-1)\mathscr{E} + 2(k-1)(k-3)\mathscr{F} + 4(k-1)(k-2)\mathscr{F} + 6(k-1)(k-2)(k-3)\mathscr{T}],
\\
        \dim \bm{Q}_h = &(k-1)\mathscr{E} + 2(k-1)(k-2)\mathscr{F} + 3(k-1)(k-2)^2 \mathscr{T},
\end{aligned}
\end{equation*}
this leads to
\begin{equation}
\label{eq:dim-counting-gradgrad}
\begin{split}
\dim \mathcal P_1 - \dim U_h + \dim \bm \Sigma_h - \dim \bm \Xi_h + \dim \bm Q_h =~ & 4 - 4\mathscr{V} + 4\mathscr{E} - 4\mathscr{F} + 4\mathscr{T} \\ =~ & 0
\end{split}
\end{equation}
by Euler's formula.

Before proving the exactness of the sequence in \eqref{eq:complex-gradgrad-D}, two auxiliary spaces (so-called bubble function spaces) on each element $T\in\mathcal{T}_{h}$ are introduced:
\begin{equation*}
\begin{split}
\mathring{\bm \Xi}(T) := \{\bm \tau \in \bm \Xi_{T} \text{ vanishes for the DOFs defined on the boundary }\partial T\},
\end{split}
\end{equation*}
\begin{equation*}
\begin{split}
    \mathring{\bm Q}(T) := \{\bm q \in \bm Q_{T} \text{ vanishes for the DOFs defined on the boundary } \partial T \text{ and } \int_T\bm q = \bm 0\}.
\end{split}
\end{equation*}
Then the following result holds, indicating that the discrete $\div$ operator is surjective for the above two bubble function spaces. The proof is similar to that in the local version, see \Cref{sec:gradgrad:local}.
\begin{lemma}
\label{lem:exactness-gradgrad-surjective-bubble}
	It holds that $\div \mathring{\bm \Xi}(T) = \mathring{\bm Q}(T)$ when $k \ge 3$. 
\end{lemma}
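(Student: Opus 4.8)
The plan is to prove the two inclusions $\div\mathring{\bm\Xi}(T)\subseteq\mathring{\bm Q}(T)$ and $\mathring{\bm Q}(T)\subseteq\div\mathring{\bm\Xi}(T)$ separately: the first by integration by parts, the second by an explicit antiderivative construction mirroring the polynomial version in \Cref{sec:gradgrad:local}. A supporting dimension count, using the dimensions of $\mathring{\bm\Xi}(T)$ and $\mathring{\bm Q}(T)$ read off from the degree-of-freedom tables, can be invoked to confirm surjectivity once the inclusion is established.

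For the inclusion, I would take $\bm\tau\in\mathring{\bm\Xi}(T)$ and show $\bm q:=\div\bm\tau\in\mathring{\bm Q}(T)$. Because every boundary degree of freedom of $\bm\tau$ vanishes, the normal trace $\bm\tau\bm n$ vanishes on $\partial T$; the (row-wise) divergence theorem then yields $\int_T\bm q=\int_{\partial T}\bm\tau\bm n=\bm 0$, which is the interior condition in the definition of $\mathring{\bm Q}(T)$. For each boundary functional of $\bm q$, namely an edge or face moment of a component $(\div\bm\tau)_i=\sum_j\partial_j\tau_{ij}$, integrating by parts within that edge or face transfers it onto boundary moments of the entries $\tau_{ij}$; up to a reduction of the test polynomial degree these are among the boundary degrees of freedom of $\bm\tau$, hence vanish. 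This gives $\div\mathring{\bm\Xi}(T)\subseteq\mathring{\bm Q}(T)$.

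For surjectivity, given $\bm q\in\mathring{\bm Q}(T)$ I would first record that the vanishing boundary DOFs force the factorisation $q_x=y(1-y)z(1-z)\tilde q_x$ with $\tilde q_x\in\cQ_{k-2,k-3,k-3}$, and cyclically for $q_y,q_z$; this is exactly the computation already made in the unisolvency proof of $\bm Q_h$. Following the local construction I would build $\bm\tau$ from row-wise antiderivatives of $\bm q$, the factorisation guaranteeing that the resulting entries vanish on most faces automatically, so that almost all boundary DOFs of $\bm\tau$ are met. The residual obstruction is the flux $\int_0^1 q_x\,dy$ (and its cyclic analogues) left on the far face; this must be carried by the diagonal entry through $\partial_x\tau_{xx}$. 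Writing $\psi_x(x)=\int_0^1\int_0^1 q_x\,dy\,dz$, the identity $\int_0^1\psi_x=\int_T q_x=0$ makes $\int_0^x\psi_x$ a genuine one-dimensional bubble, so the diagonal contribution can itself be chosen as a bubble, while the remaining slice-mean-zero part of each row is recovered by solving a two-dimensional divergence problem in the off-diagonal bubble spaces, exactly as in the polynomial case.

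I expect the main obstacle to be the traceless constraint $\tau_{xx}+\tau_{yy}+\tau_{zz}=0$, which couples the three diagonal potentials used to absorb the three leftover fluxes, so that they cannot be prescribed independently. Reconciling the three required slice means of $\tau_{xx},\tau_{yy},\tau_{zz}$ with the pointwise traceless condition is the delicate step, and it is precisely here that the global integral condition $\int_T\bm q=\bm 0$ (and the moment identities following from tracelessness via the integration by parts above) must be consumed. If this simultaneous choice proves awkward to exhibit by hand, the cleaner route is to combine the inclusion of the second paragraph with the equality $\dim\mathring{\bm\Xi}(T)-\dim\big(\mathring{\bm\Xi}(T)\cap\ker\div\big)=\dim\mathring{\bm Q}(T)$, which forces the map onto without an explicit formula for the preimage.
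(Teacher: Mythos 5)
Your inclusion argument ($\div \mathring{\bm \Xi}(T) \subseteq \mathring{\bm Q}(T)$ via integration by parts on edges/faces and the divergence theorem for the mean-zero condition) is correct, and matches what the paper dismisses as "easy to see." The genuine gap is in surjectivity, which is the entire content of the lemma, and neither of your two routes closes it. The constructive route is abandoned at precisely the hard point: you yourself identify that the leftover fluxes must be absorbed by the diagonal entries and that the traceless constraint $\tau_{xx}+\tau_{yy}+\tau_{zz}=0$ couples the three diagonal potentials, but you never exhibit the reconciliation, so no preimage is actually produced. The fallback route is circular: by rank--nullity, the identity $\dim \mathring{\bm \Xi}(T) - \dim\bigl(\mathring{\bm \Xi}(T) \cap \ker \div\bigr) = \dim \mathring{\bm Q}(T)$ is, given your inclusion, \emph{equivalent} to the surjectivity you want; to invoke it you would need to compute $\dim\bigl(\mathring{\bm \Xi}(T) \cap \ker \div\bigr)$ independently, and that dimension cannot be "read off from the degree-of-freedom tables" --- characterizing the divergence-free bubbles is as hard as the lemma itself (it amounts to exactness of a bubble complex, which nothing in the paper up to this point provides).

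For comparison, the paper sidesteps any explicit antiderivative construction. It invokes a continuous-level right inverse: for $\bm q \in \mathring{\bm Q}(T)$ there exists $\bm \tau_0 \in \bm H^1(T;\mathbb{T})$ with vanishing trace and $\div \bm \tau_0 = \bm q$. It then defines the discrete $\bm \tau \in \mathring{\bm \Xi}(T)$ by setting all boundary degrees of freedom to zero and matching the \emph{interior} moments of $\bm \tau$ with those of $\bm \tau_0$ (against $\cQ_{k-3,k-3,k-3}$ for diagonal entries, $\cQ_{k-2,k-4,k-3}$ for $\tau_{xy}$, etc.). Integration by parts (all boundary terms vanish) gives $\int_T \div \bm \tau \cdot \bm p = \int_T \bm q \cdot \bm p$ for every $\bm p$ with $p_x \in \cQ_{k-2,k-3,k-3}$ and cyclic analogues, because the entries of $\nabla \bm p$ lie exactly in the interior test spaces used to define $\bm \tau$. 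Since both $\div \bm \tau$ and $\bm q$ factor componentwise as the face bubble $y(1-y)z(1-z)$ (cyclically) times an element of that same test space, this orthogonality forces $\div \bm \tau = \bm q$. Note that the traceless coupling you struggled with is hidden inside the existence of $\bm \tau_0$, a continuous (not polynomial) statement; if you want to repair your proof, replacing your third and fourth paragraphs with this projection-plus-duality argument is the natural fix.
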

\begin{proof}
		It is easy to see that $\div \mathring{\bm \Xi}(T) \subset \mathring{\bm Q}(T)$. For any $\bm q\in \mathring{\bm Q}(T)$, there exists $\bm \tau_0\in \bm {H}^1(T;\mathbb{T})$ with vanishing trace such that $\div \bm \tau_0=\bm q$. Then define $\bm \tau\in \mathring{\bm \Xi}(T)$ such that
	\begin{equation}
	\int_{T} \tau_{xx}p = \int_{T}(\tau_0)_{xx}p,\quad \forall p\in \cQ_{k-3,k-3,k-3}(T),
	\end{equation}
	and
	\begin{equation}
	\int_{T}\tau_{xy}p = \int_{T}(\tau_0)_{xy}p\quad \forall p\in \cQ_{k-2,k-4,k-3}(T),
	\end{equation}
	while the remaining components can be similarly defined.
    As a result, $\bm \tau \in \mathring{\bm \Xi}(T).$
    Then for any $\bm p=[p_x,p_y,p_z]^{T}$ with $p_{x} \in \cQ_{k-2,k-3,k-3},p_{y} \in \cQ_{k-3,k-2,k-3},p_{z} \in \cQ_{k-3,k-3,k-2}$, it follows that
	\begin{equation*}
	\int_{T}\div \bm \tau\cdot \bm p = \int_{T}\bm \tau:\nabla \bm p =\int_{T}\bm \tau_0:\nabla  \bm p = \int_{T} \div \bm \tau_0\cdot \bm p = \int_{T}\bm q\cdot \bm p.
	\end{equation*}
	It follows from the definition of $\bm \tau$ that $\div \bm \tau$ is of the form 
    $$\div \bm \tau = \begin{bmatrix} y(1-y)z(1-z)\gamma_x\\ x(1-x)z(1-z) \gamma_y \\ x(1-x)y(1-y) \gamma_z\end{bmatrix},$$
    where $\gamma_x \in \cQ_{k-2,k-3,k-3}$ and $\gamma_{y} \in \cQ_{k-3,k-2,k-3},\gamma_{z} \in \cQ_{k-3,k-3,k-2}$ are in the corresponding space.
    Hence $\div \bm \tau = \bm q$.
    
\end{proof}

\begin{proposition}
	\label{prop:exactness-gradgrad-surjective}
	The discrete divergence operator $\div: \bm \Xi_h \to \bm Q_h$ is surjective.
	\end{proposition}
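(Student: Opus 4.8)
The plan is to prove surjectivity by a local-to-global argument built on the bubble surjectivity of \Cref{lem:exactness-gradgrad-surjective-bubble}. Given $\bm q \in \bm Q_h$, I would first construct an auxiliary field $\bm \tau_1 \in \bm \Xi_h$ whose divergence reproduces $\bm q$ at every degree of freedom of $\bm Q_h$ associated with edges and faces, and at the element averages $\int_T \bm q$ for each $T \in \mathcal T_h$. Because $\div \bm \Xi_T \subseteq \bm Q_T$ by the local complex property, the residual $\bm q_0 := \bm q - \div \bm \tau_1$ again lies in $\bm Q_h$, and by construction its edge and face degrees of freedom vanish and its average vanishes on every element, so $\bm q_0|_T \in \mathring{\bm Q}(T)$. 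Applying \Cref{lem:exactness-gradgrad-surjective-bubble} on each element yields $\bm \tau_2 \in \bigoplus_{T} \mathring{\bm \Xi}(T)$ with $\div \bm \tau_2 = \bm q_0$; since functions in $\mathring{\bm \Xi}(T)$ have vanishing boundary degrees of freedom, $\bm \tau_2$ is globally $\bm H(\div; \bT)$-conforming, and $\bm \tau := \bm \tau_1 + \bm \tau_2 \in \bm \Xi_h$ satisfies $\div \bm \tau = \bm q$.

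The construction of $\bm \tau_1$ would imitate the antiderivative scheme of the local polynomial complex — where $v_{xy}(x,y,z) = \int_0^y q_x \, ds$ realizes $\partial_y v_{xy} = q_x$ — but arranged so that the data determining each entry on a shared face is single-valued, which by the single-valuedness conditions that guarantee the $\bm H(\div;\bT)$-conformity of $\bm \Xi_h$ forces $\bm \tau_1 \in \bm \Xi_h$. I would proceed hierarchically, fixing first the edge moments and then the face moments, using that the diagonal entries $\tau_{xx}, \tau_{yy}, \tau_{zz}$ are continuous Lagrange functions and that each off-diagonal entry is single-valued across its two relevant faces. Matching the element averages is then governed by the divergence theorem, $\int_T (\div \bm \tau_1)_i = \int_{\partial T} (\bm \tau_1 \bm n)_i$, which expresses each average through boundary-face integrals of the entries of $\bm \tau_1$; these face integrals are themselves degrees of freedom of $\bm \Xi_h$ and can be prescribed.

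The main obstacle is precisely this last step: ensuring that the locally constructed pieces of $\bm \tau_1$ glue into a conforming field while simultaneously producing the correct element averages, since the averages are coupled across neighbouring elements through the shared boundary fluxes. I expect the cleanest resolution is to treat the average-matching as an auxiliary lowest-order discrete divergence problem: summing the flux balances over the mesh telescopes the interior faces and leaves only $\int_{\partial \Omega} (\bm \tau_1 \bm n)$, whose values are free degrees of freedom on $\partial\Omega$. Hence there is no global compatibility constraint to satisfy — reflecting that the continuous $\div : \bm H(\div, \Omega; \bT) \to \bm L^2(\Omega; \bR^3)$ is onto all of $\bm L^2$ on the contractible domain $\Omega$ — and the element-by-element average equations are solvable. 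A clean alternative worth noting is a commuting-interpolation argument, using continuous exactness to obtain a preimage $\bm \sigma$ with $\div \bm \sigma = \bm q$ together with a projection $\Pi$ into $\bm \Xi_h$ satisfying $\div \Pi = \Pi_Q \div$; the difficulty there is that the degrees of freedom of $\bm \Xi_h$ involve vertex values and derivative moments, so $\Pi$ is only defined on sufficiently regular fields and a regularization of $\bm \sigma$ would be required.
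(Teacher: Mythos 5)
Your proposal takes essentially the same route as the paper's own proof: the paper likewise constructs a conforming $\bm\tau$ by prescribing its degrees of freedom so that the residual $\bm q - \div\bm\tau$ lies in $\mathring{\bm Q}(T)$ on every element (zero diagonal entries, edge moments $\int_{e_x}\py\tau_{xy}\,p = \tfrac12\int_{e_x}q_x\,p$, face moments matching $q_x$, and the opposite-face differences $\int_{F_{xz}^+}\tau_{xy} - \int_{F_{xz}^-}\tau_{xy} = \tfrac12\int_T q_x$ encoding the element averages), and then corrects by the bubble result of \Cref{lem:exactness-gradgrad-surjective-bubble}. Your telescoping observation about the average-coupling is precisely what makes the paper's opposite-face prescription solvable column by column on a cuboid mesh, so the two arguments coincide in substance.
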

\begin{proof}
The proof is divided into two steps. In the first step, 
for a given $\bm q \in \bm Q_h$, a function $\bm \tau \in \bm \Xi_h$ will be constructed such that $(\div \bm \tau - \bm q)|_T \in \mathring{\bm Q}(T)$ for each $T \in \mathcal T_h$. The construction is as follows:

First, let $\tau_{xx} = \tau_{yy} = \tau_{zz} = 0$. Second, for each $e_{x}$ of $T$, let
\begin{equation}\label{eq:sur-div-ex}\int_{e_x} \tau_{xy}p~dx = 0 \for p \in \cQ_{k-2}(x),\end{equation}
and
 \begin{equation}\label{eq:sur-div-ex2}\int_{e_x} \py \tau_{xy}p~dx = \frac{1}{2} \int_{e_x} q_xp~dx \for p \in \cQ_{k-2}(x).\end{equation}

 Third, for two faces $F_{xz}^+$ and $F_{xz}^-$, let 
\begin{equation}\label{eq:sur-div-fxz}\int_{F_{xz}^+} \tau_{xy} ~dxdz - \int_{F_{xz}^-}\tau_{xy}~dxdz = \frac{1}{2} \int_{T} q_x.\end{equation}

Fourth, for each face $F_{xz}$, which is normal to the $y-$direction, let
\begin{equation}\label{eq:sur-div-fxz2}\int_{F_{xz}} \py \tau_{xy} p ~dxdy = \int_{F_{xz}} q_xp ~dxdy \for p \in \cQ_{k-2,k-3}(x,z).\end{equation}

Last, for each face $F_{xy}$, set
\begin{equation}\label{eq:sur-div-fxy} \int_{F_{xy}} \tau_{xy} p ~dxdy = 0 \for p \in \cQ_{k-2,k-4}(x,y).\end{equation}
The remaining degrees of freedom of $\tau_{xy}$ can be treated as zero. The component of $\tau_{xz}$ can be similiarly defined.
Next, it can be verified that $(\div \bm \tau_x - q_x)|_{T} \in \mathring{\bm Q}_T$.

It follows from \eqref{eq:sur-div-ex} and \eqref{eq:sur-div-ex2} that
\begin{equation}\int_{e_x} ( \py \tau_{xy} + \pz \tau_{xz} - q)p ~dxdz = 0 \for p \in \cQ_{k-2}(x).\end{equation} This shows that the degrees of freedom \eqref{eq:dof-L2V-ex} vanish for $\div \bm \tau - \bm q.$

By \eqref{eq:sur-div-ex}, \eqref{eq:sur-div-fxz2}, \eqref{eq:sur-div-fxy}, an integration by parts yields, for any $p\in Q_{k-2,k-3}(x,z)$ that
\begin{equation}
\begin{split}
\int_{F_{xz}}( \py \tau_{xy} \!+ \pz \tau_{xz} \!- q_x)p ~dxdz = & \int_{F_{xz}} (\py \tau_{xy} \!- q_x )p~dxdz \\ & +  \int_{e_{x_1}} \tau_{xz} p dx - \int_{e_{x_0}} \tau_{xz} p dx - \int_{F_{xz}}\tau_{xz}\pz p~dxdz \\
= & 0.
\end{split}
\end{equation}
It indicates the degrees of freedom in \eqref{eq:dof-L2V-fxy} and \eqref{eq:dof-L2V-fxz} vanish for $\div \bm \tau - \bm q.$

Now it suffices to show the construction is of mean zero inside the element $T$, which is directly from the following calculation
\begin{equation}
\begin{split}
    & \int_{T}( \py \tau_{xy} + \pz \tau_{xz} - q_x)~dxdydz \\ & 
    (\int_{F_{xz}^+}\tau_{xy}~dxdz -  \int_{F_{xz}^- }\tau_{xy}~dxdz)+ (\int_{F_{xy}^+}\tau_{xz}~dxdy- \int_{F_{xy}^-}\tau_{xz}~dxdy) -\int_{T}q_x~dxdydz \\ 
    = & 0.
\end{split}
\end{equation}

By Lemma~\ref{lem:exactness-gradgrad-surjective-bubble}, there exists $\tilde {\bm \tau}\in \bm\Xi_h$, such that $\div \tilde{\bm \tau} = \bm q - \div \bm \tau$.
Therefore, $\bm \tau + \tilde{\bm \tau} \in \bm \Xi_h$ and $\div(\bm \tau+\tilde{\bm \tau}) = \bm q$. Hence the proof of the proposition is complete.
\end{proof}

To show $U_h \stackrel{\grad \grad}{\longrightarrow} \bm \Sigma_h \stackrel{\curl}{\longrightarrow} \bm \Xi_h $ is exact, it suffices to prove that $\ker \curl = \im \grad \grad$ on the discrete level for those finite element spaces.
\begin{proposition}
\label{prop:exactness-gradgrad-first}
    Suppose $\bm \sigma \in \bm \Sigma_h$ such that $\curl \bm \sigma = 0$, then $\bm \sigma = \grad \grad u$ for some $u \in U_h$. That is, $\ker \curl = \im \grad \grad$ on the discrete level.
\end{proposition}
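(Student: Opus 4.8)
The inclusion $\im \grad\grad \subseteq \ker \curl$ is immediate from $\curl\grad\grad = 0$, so the plan is to establish the reverse inclusion $\ker\curl \subseteq \im\grad\grad$. Let $\bm\sigma\in\bm\Sigma_h$ with $\curl\bm\sigma=0$. Since $\bm\Sigma_h$ is $\bm H(\curl;\bS)$ conforming (\Cref{prop:uni-HcurlS}), we have $\bm\sigma\in\bm H(\curl,\Omega;\bS)$, and because $\Omega$ is contractible the exactness of the continuous complex \eqref{eq:complex-gradgrad-C} supplies a function $u\in H^2(\Omega)$, unique up to $\cP_1$, with $\grad\grad u=\bm\sigma$. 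The whole task then reduces to showing that this $u$ in fact lies in $U_h$; once that is done the proof is finished. This strategy cleanly bypasses any discrete spanning-tree or gluing argument for the local potentials, since the global potential is handed over directly by the continuous complex.

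First I would check local polynomiality. On each $T\in\cT_h$ we have $\grad\grad (u|_T)=\bm\sigma|_T\in\bm\Sigma_T$, and the exactness of the local polynomial complex (\Cref{sec:gradgrad:local}) produces a $u_T\in\cQ_{k,k,k}$ with $\grad\grad u_T=\bm\sigma|_T$. Then $\grad\grad(u|_T-u_T)=0$, so $u|_T-u_T\in\cP_1$ and hence $u|_T\in\cQ_{k,k,k}=U_T$; thus $u$ is a piecewise $\cQ_{k,k,k}$ function. It remains only to verify that $u$ meets the interelement continuity built into $U_h$, i.e. that every degree of freedom \eqref{eq:dof-H2-v}--\eqref{eq:dof-H2-t} of the BFS element is single-valued across the mesh.

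The decisive point, which I expect to be the main obstacle, is the single-valuedness of the higher-order vertex degrees of freedom in \eqref{eq:dof-H2-v}. Because $u\in H^2(\Omega)$ and $u$ is a continuous piecewise polynomial, both $u$ and $\grad u$ are globally $C^0$; this makes $u(\bm x),\px u(\bm x),\py u(\bm x),\pz u(\bm x)$ and all the face and edge traces of $u$ and its normal derivative single-valued, handling \eqref{eq:dof-H2-fyz}, the traces in \eqref{eq:dof-H2-ex}, and the zeroth- and first-order part of \eqref{eq:dof-H2-v}. For the mixed second derivatives I would use $\grad\grad u=\bm\sigma$ directly: $\pxy u=\sigma_{xy}$, $\pxz u=\sigma_{xz}$, $\pyz u=\sigma_{yz}$ are off-diagonal components of $\bm\sigma$, which are single-valued across all faces by \Cref{lem:HcurlS-conformity} and hence $C^0$ and single-valued at every vertex; the same identity $\pyz u=\sigma_{yz}$ disposes of the remaining moment in \eqref{eq:dof-H2-ex}. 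Finally, for the third-order vertex datum I would write $\pxyz u=\pz\sigma_{xy}$ and invoke the vertex degree of freedom $\pz\sigma_{xy}(\bm x)$ of $\bm\Sigma_h$ in \eqref{eq:dof-HcurlS2-v}, which is single-valued precisely because $\bm\sigma\in\bm\Sigma_h$.

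Collecting these observations, all degrees of freedom of the BFS element agree across shared vertices, edges, and faces, so $u\in U_h$ and $\bm\sigma=\grad\grad u\in\im\grad\grad$. The delicate aspect is thus not the existence of a potential, but the matching of the mixed-derivative vertex data of $u$ with the vertex degrees of freedom of $\bm\Sigma_h$: it is exactly the transverse continuity of the off-diagonal components $\sigma_{xy},\sigma_{yz},\sigma_{zx}$ together with that of their derivatives $\pz\sigma_{xy}$, and so on, that certifies $u$ is $H^2$ conforming in the discrete sense. I would close by noting that, combined with \Cref{prop:exactness-gradgrad-surjective} and the dimension identity \eqref{eq:dim-counting-gradgrad}, this establishes exactness of the full sequence \eqref{eq:complex-gradgrad-D}.
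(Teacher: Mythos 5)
Your proposal is correct and follows essentially the same route as the paper's proof: obtain the potential $u\in H^2(\Omega)$ from the exactness of the continuous complex \eqref{eq:complex-gradgrad-C}, observe that $u$ is piecewise $\cQ_{k,k,k}$ and $C^1$, and then verify the higher-order degrees of freedom of the BFS element via the identities $\pxy u=\sigma_{xy}$, $\pyz u=\sigma_{yz}$, $\pxz u=\sigma_{xz}$, $\pxyz u=\pz\sigma_{xy}$ and the single-valuedness built into $\bm\Sigma_h$. Your treatment of local polynomiality through the local polynomial complex, and of the edge moments $\int_{e_x}\pyz u\, p\,dx=\int_{e_x}\sigma_{yz}\,p\,dx$, matches the paper's argument in all essentials.
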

\begin{proof}
Since $\curl \bm \sigma = 0$, there exists $u \in H^2(\Omega)$ such that $\grad \grad u = \bm \sigma$, from \eqref{eq:complex-gradgrad-C}. Since $\bm \sigma|_T \in \bm \Sigma_h$ is a symmetric matrix-valued polynomial for each element $T \in \mathcal T$, $u|_T$ is a polynomial of degree $k$ for all variables $x,y,z$. 
Combining with $u \in H^2$, it follows that $u \in C^1(\Omega)$.
To show that $u \in U_h$, it suffices to check the higher order continuity across the internal vertices, edges, and faces of $\mathcal T_h$. It follows from $$\pxyz u = \pz \sigma_{xy}, \pxy u = \sigma_{xy}, \pyz u = \sigma_{yz} \text{ and }\pxz u = \sigma_{xz},$$ that the degrees of freedom of $u$ defined at vertices are single-valued by the definition of $\bm \Sigma_h$. A similar argument shows the continuity of the following moments $$\int_{e_x} \pyz u p ~dx = \int_{e_x} \sigma_{yz} p dx$$ for any $p \in \cQ_{k-4}(x)$. Hence it holds that $u \in U_h$.

\end{proof}

A combination of  the dimension counting \eqref{eq:dim-counting-gradgrad}, Proposition~\ref{prop:exactness-gradgrad-surjective} and Proposition~\ref{prop:exactness-gradgrad-first} indicates the following result. 
\begin{theorem}
The finite element sequence in \eqref{eq:complex-gradgrad-D} is an exact complex.
\end{theorem}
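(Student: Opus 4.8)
The plan is to assemble the theorem from the three facts already established — exactness at $\bm\Sigma_h$ (Proposition~\ref{prop:exactness-gradgrad-first}), surjectivity of the discrete divergence (Proposition~\ref{prop:exactness-gradgrad-surjective}), and the dimension identity \eqref{eq:dim-counting-gradgrad} — together with the elementary fact that $\grad\grad$ has kernel $\cP_1$. Since the shape-function spaces were chosen so that \eqref{eq:complex-gradgrad-D} is a complex, at every node one already has $\im \subseteq \ker$, and the task reduces to matching dimensions node by node.

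First I would dispatch the two end nodes. At $U_h$, because $U_h \subset H^2(\Omega)$ and $\Omega$ is contractible, any $u \in U_h$ with $\grad\grad u = 0$ is globally affine by the continuous complex \eqref{eq:complex-gradgrad-C}, so $\ker(\grad\grad) = U_h \cap \cP_1 = \cP_1$; this gives exactness at $U_h$ and, via rank--nullity, $\dim\im(\grad\grad) = \dim U_h - 4$. At the terminal node, Proposition~\ref{prop:exactness-gradgrad-surjective} gives $\im(\div) = \bm Q_h$, i.e. exactness at $\bm Q_h$. Exactness at $\bm\Sigma_h$ is precisely Proposition~\ref{prop:exactness-gradgrad-first}, which yields $\dim\ker(\curl) = \dim\im(\grad\grad) = \dim U_h - 4$ and hence $\dim\im(\curl) = \dim\bm\Sigma_h - \dim U_h + 4$.

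The only remaining node is $\bm\Xi_h$, where I must show $\ker(\div) = \im(\curl)$; the complex property already gives $\im(\curl) \subseteq \ker(\div)$, so it suffices to verify that these two subspaces have equal dimension. Rank--nullity for $\div$ together with its surjectivity gives $\dim\ker(\div) = \dim\bm\Xi_h - \dim\bm Q_h$, while the previous step gives $\dim\im(\curl) = \dim\bm\Sigma_h - \dim U_h + 4$. Equating these two quantities is exactly the assertion that
\begin{equation*}
4 - \dim U_h + \dim\bm\Sigma_h - \dim\bm\Xi_h + \dim\bm Q_h = 0,
\end{equation*}
which is the content of \eqref{eq:dim-counting-gradgrad}, proved there by Euler's formula. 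Thus $\im(\curl) \subseteq \ker(\div)$ is an inclusion of spaces of equal finite dimension, forcing equality and establishing exactness at $\bm\Xi_h$; combined with the three nodes above, this completes the proof.

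There is essentially no hard analytic step left: all the genuine work — the unisolvency of the four elements, the bubble-space surjectivity of Lemma~\ref{lem:exactness-gradgrad-surjective-bubble}, and the kernel characterizations — has been carried out in the preceding propositions, so the theorem is a bookkeeping assembly. The one point deserving care is exactness at $U_h$: one must invoke the contractibility of $\Omega$ through \eqref{eq:complex-gradgrad-C} rather than a purely local polynomial argument, since without this global input the alternating-sum cancellation of \eqref{eq:dim-counting-gradgrad} could not be converted into exactness at the interior node $\bm\Xi_h$. I would therefore state that global step explicitly.
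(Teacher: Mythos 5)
Your proposal is correct and follows essentially the same route as the paper, whose proof is exactly the stated combination of the dimension identity \eqref{eq:dim-counting-gradgrad}, Proposition~\ref{prop:exactness-gradgrad-surjective}, and Proposition~\ref{prop:exactness-gradgrad-first}; you have merely made explicit the rank--nullity bookkeeping (and the kernel characterization $\ker(\grad\grad)=\cP_1$ at the first node) that the paper leaves implicit. No gap: your expanded argument is a faithful completion of the paper's one-line proof.
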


\subsection{A discrete complex with reduced regularity}
In this section, a new $\bm H(\curl; \bS)$ conforming space and a new $\bm H(\div; \bT)$ conforming space with reduced regularity will be constructed.
\subsubsection{\texorpdfstring{$\bm H(\curl;\bS)$}{H(curl;S)} conforming finite element space with reduced regularity}

    This subsection considers an $\bm H(\curl;\bS)$ conforming finite element space $\bm{\widetilde{\Sigma}}_h$ with reduced regularity. For this element, the shape function space on $T$ is
    \begin{equation}
    \label{eq:shapefunc-HcurlS-new}
    \begin{bmatrix}
    \sigma_{xx} & \sigma_{xy} & \sigma_{xz} \\
    \sigma_{yx} & \sigma_{yy} & \sigma_{yz} \\
    \sigma_{zx} & \sigma_{zy} & \sigma_{zz}
    \end{bmatrix}
    \in
    \begin{bmatrix}
        \cQ_{k-2,k,k} & \cQ_{k-1,k-1,k} & \cQ_{k-1,k,k-1}\\
        \cQ_{k-1,k-1,k} & \cQ_{k,k-2,k} & \cQ_{k,k-1,k-1} \\
        \cQ_{k-1,k,k-1} & \cQ_{k,k-1,k-1} & \cQ_{k,k,k-2} 
    \end{bmatrix} =: \bm \Sigma_T.
    \end{equation}

    For $\bm \sigma \in \bm{\widetilde \Sigma}_h$, due to its symmetry, the degrees of freedom will be separated into six parts, $$\sigma_{xx}, \sigma_{yy}, \sigma_{zz}, \sigma_{xy} = \sigma_{yx}, \sigma_{xz} = \sigma_{zx}, \sigma_{yz} = \sigma_{zy}.$$ In the follows, only the degrees of freedom of $\sigma_{xx}$ and $\sigma_{xy}$ will be specified, since those of the remaining four components are similarly defined via cyclic permutation of the index.
    
    \textbf{The degrees of freedom of $\sigma_{xx}$} are defined as follows:
    \begin{enumerate}
        \item The moments of $\sigma_{xx}$ on each edge $e_x$ of $T$, \begin{equation}\label{eq:dof-HcurlS-new-e}
        \int_{e_x} \sigma_{xx}p~dx, 
          \text{ for }p \in \cQ_{k-2}(x).
        \end{equation}
        \item The moments of $\sigma_{xx}$ on each face $F_{xy}$ of $T$,
        \begin{equation}\label{eq:dof-HcurlS-new-fxy}
        \int_{F_{xy}} \sigma_{xx}p~dxdy,\for p \in \cQ_{k-2,k-2}(x,y).
        \end{equation}
        \item The moments of $\sigma_{xx}$ on each face $F_{xz}$ of $T$,
        \begin{equation}\label{eq:dof-HcurlS-new-fxz}
             \int_{F_{xz}} \sigma_{xx}p~dxdz, \for p \in \cQ_{k-2,k-2}(x,z).\end{equation}
        \item The moments of $\sigma_{xx}$ inside the element $T$, \begin{equation}\label{eq:dof-HcurlS-new-t} \int_{T} \sigma_{xx}p ~dxdydz \for p \in \cQ_{k-2,k-2,k-2}(x,y,z).\end{equation}
    
    \end{enumerate}

Compared to the version provided in \Cref{sec:HcurlS}, the reduced element relaxes some partial regularity on edges.

    \textbf{The degrees of freedom of $\sigma_{xy}$} are defined as follows:
    \begin{enumerate}
        \item The value and partial derivative along $z$-direction of $\sigma_{xy}$ at each vertex $\bm x$ of $T$, \begin{equation}\label{eq:dof-HcurlS2-new-v}\sigma_{xy}(\bm{x}),  \pz\sigma_{xy}(\bm{x}).\end{equation}
        \item The moments of $\sigma_{xy}$ on each edge $e_x$ of $T$,
        \begin{equation}\label{eq:dof-HcurlS2-new-ex}
        \int_{e_x} \sigma_{xy} p ~dx,  {\int_{e_x} \pz \sigma_{xy}}\for p \in \cQ_{k-3}(x).
        \end{equation}
        \item The moments of $\sigma_{xy}$ on each edge $e_y$ of $T$, 
         \begin{equation}\label{eq:dof-HcurlS2-new-ey}\int_{e_y} \sigma_{xy} p ~dy, {\int_{e_y} \pz \sigma_{xy}} \for  p \in \cQ_{k-3}(y).\end{equation}
         \item The moments of $\sigma_{xy}$ on each edge $e_z$ of $T$, \begin{equation}\label{eq:dof-HcurlS2-new-ez}\int_{e_z} \sigma_{xy} p~dx \for  p \in \cQ_{k-4}(z).\end{equation}
         
         \item The moments of $\sigma_{xy}$ on each face $F_{xy}$ of $T$,\begin{equation}\label{eq:dof-HcurlS2-new-fxy}\int_{F_{xy}} \sigma_{xy} p ~dxdy,  \for p \in \cQ_{k-3,k-3}(x,y).\end{equation}
         \item The moments of $\sigma_{xy}$ on each face $F_{xz}$ of $T$,  \begin{equation}\label{eq:dof-HcurlS2-new-fxz}\int_{F_{xz}} \sigma_{xy} p ~dxdz \for p \in \cQ_{k-3,k-4}(x,z).\end{equation} 
         \item The moments of $\sigma_{xy}$ on each face $F_{yz}$ of $T$, 
         \begin{equation}\label{eq:dof-HcurlS2-new-fyz}\int_{F_{yz}} \sigma_{xy} p~dydz \for p \in \cQ_{k-3,k-4}(y,z).\end{equation}
         \item The moments of $\sigma_{xy}$ inside the element $T$, \begin{equation}\label{eq:dof-HcurlS2-new-t}\int_T \sigma_{xy} p ~dxdydz \for p \in \cQ_{k-3,k-3,k-2}(x,y,z).\end{equation}
    \end{enumerate}

 Compared to the finite element in \Cref{sec:HdivT}, the major difference here is in the fifth set of the degrees of freedom.

   The next proposition shows the unisolvency of the degrees of freedom with respect to the shape function spaces \eqref{eq:shapefunc-HcurlS-new}, and the corresponding finite element space is $H(\curl; \bS)$ conforming.

    \begin{proposition}
    The above degrees of freedom are unisolvent with respect to the shape function space $\Sigma_{T}$ in \eqref{eq:shapefunc-HcurlS-new}, and the resulting finite element space $\bm{\widetilde \Sigma}_h$ is $\bm H(\curl; \bS)$ conforming.
    \end{proposition}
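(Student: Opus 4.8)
The plan is to follow the structure of the proof of \Cref{prop:uni-HcurlS}, treating the diagonal component $\sigma_{xx}$ and the off-diagonal component $\sigma_{xy}$ separately; by the cyclic symmetry of the construction these two cases suffice. In each case I would first check that the number of prescribed degrees of freedom equals the dimension of the relevant block of $\bm \Sigma_T$ in \eqref{eq:shapefunc-HcurlS-new}, namely $\dim \cQ_{k-2,k,k} = (k-1)(k+1)^2$ for $\sigma_{xx}$ and $\dim \cQ_{k-1,k-1,k} = k^2(k+1)$ for $\sigma_{xy}$, so that unisolvency reduces to the injectivity statement: if all degrees of freedom vanish then the component vanishes. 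The $\bm H(\curl,\Omega;\bS)$ conformity is then read off from the injectivity argument together with \Cref{lem:HcurlS-conformity}.

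For the diagonal component the argument is short. Since $\sigma_{xx}|_{e_x}\in\cQ_{k-2}(x)$, the edge moments \eqref{eq:dof-HcurlS-new-e} force $\sigma_{xx}$ to vanish on every edge $e_x$. On a face $F_{xy}$ the trace $\sigma_{xx}|_{F_{xy}}\in\cQ_{k-2,k}(x,y)$ then vanishes on the two $e_x$ edges of that face, hence is divisible by $y(1-y)$; writing $\sigma_{xx}|_{F_{xy}}=y(1-y)w$ with $w\in\cQ_{k-2,k-2}(x,y)$ and testing \eqref{eq:dof-HcurlS-new-fxy} against $p=w$ gives $w=0$. The same reasoning on $F_{xz}$ using \eqref{eq:dof-HcurlS-new-fxz} yields $\sigma_{xx}|_{F_{xz}}=0$, whence $\sigma_{xx}=y(1-y)z(1-z)\sigma_1$ with $\sigma_1\in\cQ_{k-2,k-2,k-2}$, and the interior moments \eqref{eq:dof-HcurlS-new-t} finish by the same testing trick.

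For the off-diagonal component the reductions are more delicate and constitute the main obstacle. The vertex data \eqref{eq:dof-HcurlS2-new-v} and the edge moments \eqref{eq:dof-HcurlS2-new-ex}, \eqref{eq:dof-HcurlS2-new-ey} pin down $\sigma_{xy}$ and $\pz\sigma_{xy}$ on every edge $e_x$ and $e_y$; on $e_z$, where $\sigma_{xy}|_{e_z}\in\cQ_k(z)$ carries one extra degree of freedom, I would combine the endpoint value and $\pz$ (Hermite) data from \eqref{eq:dof-HcurlS2-new-v} with \eqref{eq:dof-HcurlS2-new-ez} to force $\sigma_{xy}=0$ on $e_z$. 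The crucial observation concerns the faces $F_{xz}$ and $F_{yz}$: since both $\sigma_{xy}$ and $\pz\sigma_{xy}$ already vanish on the bounding $e_x$ (respectively $e_y$) edges, the trace $\sigma_{xy}|_{F_{xz}}$ has a double zero at $z=0,1$, so it factors as $x(1-x)z^2(1-z)^2 w$ with $w\in\cQ_{k-3,k-4}(x,z)$, which is exactly the test space of the reduced moment \eqref{eq:dof-HcurlS2-new-fxz}; testing against $p=w$ gives $w=0$, and symmetrically on $F_{yz}$. On $F_{xy}$ the reduced degrees of freedom \eqref{eq:dof-HcurlS2-new-fxy} control only the value of $\sigma_{xy}$, and since it vanishes on the four bounding edges the trace factors as $x(1-x)y(1-y)w$ with $w\in\cQ_{k-3,k-3}(x,y)$, giving $\sigma_{xy}|_{F_{xy}}=0$ but only a simple zero $z(1-z)$ across the face. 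Collecting the vanishing traces on all three face pairs yields $\sigma_{xy}=x(1-x)y(1-y)z(1-z)\sigma_2$ with $\sigma_2\in\cQ_{k-3,k-3,k-2}$, whose space matches the enlarged interior moment \eqref{eq:dof-HcurlS2-new-t}, forcing $\sigma_2=0$. I would record the low-order case $k=3$ separately, where $\cQ_{k-4}$ and $\cQ_{k-3,k-4}$ degenerate to null spaces and the traces on $F_{xz},F_{yz}$ vanish automatically for degree reasons.

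Finally, for conformity I would invoke \Cref{lem:HcurlS-conformity}. Each trace computation above uses only degrees of freedom attached to a single face together with its boundary edges and vertices, all of which are shared by the two elements meeting at that face; hence every such trace is single-valued across $\mathcal T_h$. For the diagonal entries this provides single-valuedness across $F_{xy}$ and $F_{xz}$ (and their cyclic analogues), which is all the lemma requires, while for the off-diagonal entries it provides single-valuedness across all faces. Both hypotheses of \Cref{lem:HcurlS-conformity} are thus met, so $\bm{\widetilde\Sigma}_h\subset\bm H(\curl,\Omega;\bS)$. I expect the dimension bookkeeping and, above all, the $F_{xz}/F_{yz}$ divisibility count to require the most care, the latter being exactly where the relaxed edge regularity is compensated by the enlarged face and interior moment spaces.
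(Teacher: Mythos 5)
Your proposal is correct and follows essentially the same route as the paper, which proves this proposition by declaring the diagonal case classical and deferring the off-diagonal case to the argument of \Cref{prop:uni-HcurlS}: trace-by-trace reduction on vertices, edges and faces, factorization against the bubble, and testing with the matching moment space, with conformity read off from \Cref{lem:HcurlS-conformity} since every trace is determined by degrees of freedom shared across the face. You correctly make explicit the one point the paper leaves implicit, namely that the reduced face moments on $F_{xy}$ yield only a simple zero $z(1-z)$ (rather than $z^2(1-z)^2$) in the factorization of $\sigma_{xy}$, which is exactly compensated by the enlarged interior test space $\cQ_{k-3,k-3,k-2}$.
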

    \begin{proof}
        The unisolvency of $\sigma_{xx}$ is classical. For the component $\sigma_{xy}$,  the proof is similar as that of $\sigma_{xy}$ of $\bm \Sigma_h$. The $\bm H(\curl, \bS)$ conformity is implied in the previous proof.
    \end{proof}
    The dimension of $\bm{\widetilde \Sigma}_{h}$ is 
    \begin{equation*}
    \begin{aligned}
    &\dim \bm{\widetilde \Sigma}_{h}= [(k-1)\mathscr{E} + 2(k-1)^2\mathscr{F} + 3(k-1)^3\mathscr{T}]\\
    &+[6\mathscr{V} + 4(k-2)\mathscr{E} + (k-3)\mathscr{E} + (k-2)^2\mathscr{F} + 2(k-2)(k-3)\mathscr{F} + 3(k-2)^2(k-1)\mathscr{T}].
    \end{aligned}
    \end{equation*}

    \subsubsection{\texorpdfstring{$\bm H(\div;\bT)$}{H(div;T)} conforming finite element space with reduced regularity}

    This subsection considers a new $\bm H(\div; \bT)$ conforming finite element space $\bm{\widetilde{\Xi}}_h$ with reduced regularity. The shape function space on $T$ is as follows:
\begin{equation}
    \label{eq:shapefunc-HdivT-new}
    \begin{bmatrix}
        \tau_{xx} & \tau_{xy} & \tau_{xz} \\ \tau_{yx} & \tau_{xy} & \tau_{yz} \\ \tau_{zx} & \tau_{zy} & \tau_{zz}
    \end{bmatrix}
    \in 
    \begin{bmatrix}
        \cQ_{k-1,k-1,k-1} & \cQ_{k-2,k,k-1} &\cQ_{k-2,k-1,k} \\
        \cQ_{k,k-2,k-1} & \cQ_{k-1,k-1,k-1} & \cQ_{k-1,k-2,k} \\
        \cQ_{k,k-1,k-2} & \cQ_{k-1,k,k-2} &\cQ_{k-1,k-1,k-1}
    \end{bmatrix} = \bm \Xi_T.
    \end{equation}

The following bubble space on $T$ is needed for the following construction:
\begin{equation}
\begin{split}
\mathcal{B}_{\div,k;\bT}(T) =\big\{ &(\tau_{xx},\tau_{yy}, \tau_{zz}) \in \cQ_{k-1,k-1,k-1} \times \cQ_{k-1,k-1,k-1} \times \cQ_{k-1,k-1,k-1} : \\ &  \tau_{xx} \text{ vanishes on } F_{yz}, \tau_{yy} \text{ vanishes on } F_{xz},  \tau_{zz} \text{ vanishes on } F_{xy}, \\ &  \text{ and } \tau_{xx} + \tau_{yy} + \tau_{zz} = 0\big\}.  
\end{split}
\end{equation}
\begin{lemma}[Dimension of $\mathcal{B}_{\div,k;\bT}(T)$] It holds that
\begin{equation}
\dim \mathcal{B}_{\div, k ; \bT}(T) = 2(k-2)^2(k+1).
\end{equation}
\end{lemma}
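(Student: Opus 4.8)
The plan is to realize $\mathcal{B}_{\div,k;\bT}(T)$ as the kernel of a single summation map and then invoke rank--nullity, so that the whole problem reduces to computing the dimension of one sum of subspaces. First I would introduce the three boundary-vanishing spaces
\[
V_x := \{f \in \cQ_{k-1,k-1,k-1} : f|_{F_{yz}} = 0\},
\]
and $V_y, V_z$ defined analogously by vanishing on $F_{xz}$ and $F_{xy}$. Since $F_{yz}$ is the pair of faces $x=0$ and $x=1$, a polynomial $f$ lies in $V_x$ exactly when $f = x(1-x)g$ with $g \in \cQ_{k-3,k-1,k-1}$, whence $\dim V_x = (k-2)k^2$, and likewise $\dim V_y = \dim V_z = (k-2)k^2$. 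Then I would consider
\[
\Phi : V_x \times V_y \times V_z \longrightarrow \cQ_{k-1,k-1,k-1}, \qquad \Phi(\tau_{xx},\tau_{yy},\tau_{zz}) = \tau_{xx}+\tau_{yy}+\tau_{zz}.
\]
By construction $\ker \Phi = \mathcal{B}_{\div,k;\bT}(T)$ and $\im \Phi = V_x+V_y+V_z$, so rank--nullity yields
\[
\dim \mathcal{B}_{\div,k;\bT}(T) = 3(k-2)k^2 - \dim(V_x+V_y+V_z).
\]
Note that the traceless constraint $\tau_{xx}+\tau_{yy}+\tau_{zz}=0$ enters only through the identification $\ker\Phi = \mathcal{B}_{\div,k;\bT}(T)$, so it needs no separate treatment.

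The remaining and main task is to compute $\dim(V_x+V_y+V_z)$, and the cleanest route is a tensor-product decomposition. Writing $\cQ_{k-1,k-1,k-1} = P^x \otimes P^y \otimes P^z$ with $P^x := \cQ_{k-1}(x)$ (similarly in $y,z$), I would split each factor as $P^x = P_0^x \oplus W^x$, where $P_0^x := \{p \in \cQ_{k-1}(x) : p(0)=p(1)=0\}$ has dimension $k-2$ and $W^x$ is any complement of dimension $2$. Expanding the triple tensor product gives a direct-sum decomposition of $\cQ_{k-1,k-1,k-1}$ into eight summands indexed by the choice of $P_0$ or $W$ in each slot. Under this decomposition $V_x$ is precisely the direct sum of the four summands carrying $P_0$ in the $x$-slot, and analogously for $V_y, V_z$; hence $V_x+V_y+V_z$ is the direct sum of all summands carrying $P_0$ in at least one slot, i.e. the whole space with only the single corner summand $W^x \otimes W^y \otimes W^z$ removed. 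Therefore
\[
\dim(V_x+V_y+V_z) = \dim \cQ_{k-1,k-1,k-1} - \dim(W^x \otimes W^y \otimes W^z) = k^3 - 2^3 = k^3 - 8.
\]

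Substituting back gives $\dim \mathcal{B}_{\div,k;\bT}(T) = 3(k-2)k^2 - (k^3-8) = 2k^3 - 6k^2 + 8$, which factors as $2(k-2)^2(k+1)$, as claimed. I expect the only delicate point to be the bookkeeping of the eight tensor summands together with the observation that $V_x+V_y+V_z$ omits exactly the corner term $W^x\otimes W^y\otimes W^z$; once the tensor splitting is in place the direct-sum structure makes the count immediate. This approach also sidesteps a three-subspace inclusion--exclusion, which is not valid for general subspaces; it does happen to give the same answer here because these three spaces form a distributive lattice, but I would rather rely on the transparent tensor decomposition than justify that coincidence.
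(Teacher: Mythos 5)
Your proof is correct, and it takes a genuinely different route from the paper's. The paper argues by explicit parametrization: it writes $\tau_{xx}=x(1-x)p$, $\tau_{yy}=y(1-y)q$ with $p\in\cQ_{k-3,k-1,k-1}$, $q\in\cQ_{k-1,k-3,k-1}$, eliminates $\tau_{zz}=-\tau_{xx}-\tau_{yy}$, and turns the requirement that $\tau_{zz}$ vanish at $z=0,1$ into compatibility data $f,g\in\cQ_{k-3,k-3}(x,y)$; it then counts the free parameters by regarding $p$ and $q$ as polynomials in $z$ determined by their (constrained) values at $z=0,1$ together with free values at $k-2$ interior interpolation points, arriving at $2(k-2)^2+2(k-2)^2k=2(k-2)^2(k+1)$. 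You instead realize $\mathcal{B}_{\div,k;\bT}(T)$ as $\ker\Phi$ for the summation map $\Phi:V_x\times V_y\times V_z\to\cQ_{k-1,k-1,k-1}$ and reduce everything to $\dim(V_x+V_y+V_z)$, which your tensor splitting $\cQ_{k-1}=P_0\oplus W$ in each variable identifies as the complement of the single corner summand $W^x\otimes W^y\otimes W^z$, giving $k^3-8$; rank--nullity then yields $3(k-2)k^2-(k^3-8)=2(k-2)^2(k+1)$. Your identifications $V_x=P_0^x\otimes P^y\otimes P^z$ and $V_x+V_y+V_z=\bigl(\text{all eight summands except } W^x\otimes W^y\otimes W^z\bigr)$ are both valid, and the closing remark about avoiding three-subspace inclusion--exclusion is well taken. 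What each approach buys: yours is structural, makes the correction term $8=2^3$ transparent, and generalizes immediately to other degree patterns or more variables; the paper's is constructive and exhibits an explicit parametrization of the bubble functions (via $f$, $g$, and interior interpolation values), which is closer to what one would use to build an actual basis, at the cost of some interpolation bookkeeping that your argument sidesteps.
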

\begin{proof}
   Since $x(1-x)|\tau_{xx}$, $y(1-y) | \tau_{yy}$, $z(1-z) | \tau_{zz}$, and $\tau_{xx} + \tau_{yy} +\tau_{zz} = 0$, there exist $p \in \cQ_{k-3,k-1,k-1}$ and $q \in \cQ_{k-1,k-3,k-1}$ such that 
\begin{equation}
x(1-x)p(x,y,0) + y(1-y)q(x,y,0) = 0
\end{equation}
and 
\begin{equation}
    x(1-x)p(x,y,1) + y(1-y)q(x,y,1) = 0.
\end{equation}
It follows that there exist $f(x,y) \in \cQ_{k-3,k-3}(x,y)$ and $g(x,y) \in \cQ_{k-3,k-3}(x,y)$ such that 
$$p(x,y,0) = y(1-y)f(x,y), \qquad q(x,y,0) = -x(1-x)f(x,y)$$
and
$$p(x,y,1) = y(1-y)g(x,y), \qquad q(x,y,1) = -x(1-x)g(x,y)$$
for $f,g \in \cQ_{k-3,k-3}(x,y)$. 

 Regard $p$ as a polynomial of $z$, of degree $\le k-1$ and coefficients in $\cQ_{k-3,k-1}(x,y)$. Since the values of $p$ at $z=0$ and $z=1$ are given as above, additional values of $p(z_i)$ at $k-2$ points $z_i,i = 1,2,\cdots,k-2,$ uniquely determine $p$.
Therefore, the dimension of $\mathcal{B}_{\div,k;\bT}(T)$ is 
\begin{equation}
2(k-2)^2k + 2(k-2)^2 = 2(k-2)^2(k+1).
\end{equation}
\end{proof}
Based on the above observations, now it is ready to define the degrees of freedom.

\textbf{The degrees of freedom of $(\tau_{xx}, \tau_{yy}, \tau_{zz})$} are defined as follows:

\begin{enumerate}
    \item The values $\tau_{xx}(\bm{x}), \tau_{yy}(\bm{x}), \tau_{zz}(\bm{x})$ at each vertex $\bm{x}$ of element $ T$.
    \item The moments of $\tau_{xx}, \tau_{yy}, \tau_{zz}$ on each edge $e$ of element $T$, 
    \begin{equation}\label{eq:dof-HdivT-new-e}\int_{e} \tau_{xx} p ~dl, \int_{e} \tau_{yy} p ~dl,  \int_{e} \tau_{zz} p ~dl  \for p \in \cQ_{k-3}.\end{equation}

    \item The moments of $\tau_{xx}$ on each face $F_{yz}$ of element $T$, \begin{equation}\label{eq:dof-HdivT-new-f} \int_{F_{yz}} \tau_{xx} p ~ds \for p \in \cQ_{k-3,k-3}(y,z).\end{equation}
    A similar set of degrees of freedom can be defined for $\tau_{yy}$ and $\tau_{zz}$ by cyclic permuatation of the index.
    \item The following moments inside the element $T$, 
\begin{equation}
    \label{eq:dof-HdivT-new-t}
\int_T (\tau_{xx}\xi_{xx} + \tau_{yy}\xi_{yy} + \tau_{zz}\xi_{zz}) dxdydz \for (\xi_{xx}, \xi_{yy}, \xi_{zz}) \in \mathcal{B}_{\div,k;\bT}(T).
\end{equation}

\end{enumerate}


\textbf{The degrees of freedom on $\tau_{xy}$} are defined as follows:
\begin{enumerate}
    \item The moments of $\tau_{xy}$ each face $F_{xz}$ of $T$, \begin{equation}\label{eq:dof-HdivT2-new-fxz}\int_{F_{xz}}\tau_{xy}p~dxdz,  \for p \in \cQ_{k-2,k-1}(x,z).\end{equation}
    \item The moments of $\tau_{xy}$ inside the element $T$, \begin{equation}\label{eq:dof-HdivT2-new-t}\int_T \tau_{xy}p~dxdy \for p \in \cQ_{k-2,k-2,k-1}(x,y,z).\end{equation}
\end{enumerate}

The degrees of freedom for the remaining off-diagonal components can be similarly defined. 

The unisolvency and desired conformity of the finite element is stated in the following proposition.
\begin{proposition}
The above degrees of freedom are unisolvent for the shape function $\Xi_T$, defined in \eqref{eq:shapefunc-HdivT-new}. Moreover, the resulting finite element space $\bm{\widetilde{\Xi}}_h$ is $\bm H(\div; \bT)$ conforming.
\end{proposition}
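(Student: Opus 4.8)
The plan is to establish unisolvency componentwise, isolating the six off-diagonal entries (each of which is handled separately, exactly as for the non-reduced element) from the coupled diagonal triple $(\tau_{xx},\tau_{yy},\tau_{zz})$, and then to read off $\bm H(\div;\bT)$ conformity from the face degrees of freedom. First I would record the dimension bookkeeping. For each off-diagonal entry, say $\tau_{xy}\in\cQ_{k-2,k,k-1}$, the face moments \eqref{eq:dof-HdivT2-new-fxz} on the two faces $F_{xz}$ together with the interior moments \eqref{eq:dof-HdivT2-new-t} number $2(k-1)k+(k-1)^2k=(k-1)k(k+1)=\dim\cQ_{k-2,k,k-1}$. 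For the diagonal triple, using tracelessness to regard the three vertex values (resp. three edge moments) as only two independent functionals, the vertex, edge and face data contribute $16+24(k-2)+6(k-2)^2$, while the interior coupling \eqref{eq:dof-HdivT-new-t} contributes $\dim\mathcal B_{\div,k;\bT}(T)=2(k-2)^2(k+1)$; the total equals $2k^3$, the dimension of the traceless diagonal space. Since the counts match, it remains to prove injectivity.

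For an off-diagonal entry, suppose $\tau_{xy}$ annihilates all its functionals. The restriction $\tau_{xy}|_{F_{xz}}$ lies in $\cQ_{k-2,k-1}(x,z)$, which is exactly the test space in \eqref{eq:dof-HdivT2-new-fxz}, so $\tau_{xy}$ vanishes on both faces $F_{xz}$. Since $\tau_{xy}$ has $y$-degree $k$ and vanishes at $y=0,1$, I can factor $\tau_{xy}=y(1-y)\tau_1$ with $\tau_1\in\cQ_{k-2,k-2,k-1}$; choosing $p=\tau_1$ in \eqref{eq:dof-HdivT2-new-t} gives $\int_T y(1-y)\tau_1^2=0$, whence $\tau_1=0$ and $\tau_{xy}=0$.

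The diagonal triple is the crux. Assume $(\tau_{xx},\tau_{yy},\tau_{zz})$ annihilates every diagonal functional. A one-dimensional argument on each edge (vanishing endpoint values, vanishing moments against $\cQ_{k-3}$, and the factor $t(1-t)$) shows each diagonal entry vanishes on all twelve edges. Restricting $\tau_{xx}$ to a face $F_{yz}$, it then vanishes on $\partial F_{yz}$, so $\tau_{xx}|_{F_{yz}}=y(1-y)z(1-z)r$ with $r\in\cQ_{k-3,k-3}(y,z)$; taking $p=r$ in the face moments \eqref{eq:dof-HdivT-new-f} forces $r=0$, hence $\tau_{xx}$ vanishes on both faces $F_{yz}$, i.e. $x(1-x)\mid\tau_{xx}$. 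The cyclic versions give $y(1-y)\mid\tau_{yy}$ and $z(1-z)\mid\tau_{zz}$. These are precisely the defining conditions of $\mathcal B_{\div,k;\bT}(T)$, so $(\tau_{xx},\tau_{yy},\tau_{zz})\in\mathcal B_{\div,k;\bT}(T)$. Now I would take $(\xi_{xx},\xi_{yy},\xi_{zz})=(\tau_{xx},\tau_{yy},\tau_{zz})$ in the interior coupling \eqref{eq:dof-HdivT-new-t}, obtaining $\int_T(\tau_{xx}^2+\tau_{yy}^2+\tau_{zz}^2)=0$ and hence $\tau_{xx}=\tau_{yy}=\tau_{zz}=0$. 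This last step---recognizing that the boundary data drive the triple into the bubble space and that the single coupled interior functional then acts as an inner product against that same space---is the main point, and the only place where the coupling of the diagonal degrees of freedom is essential.

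Finally, for conformity I would invoke the face-based characterization of $\bm H(\div;\bT)$ used for the non-reduced element: each row's normal component must be single-valued, i.e. $\tau_{xx}$ across $F_{yz}$, $\tau_{xy}$ across $F_{xz}$, $\tau_{xz}$ across $F_{xy}$, and cyclically. Each off-diagonal face moment set tests against the full restriction space of that entry to the relevant face, so the trace is fixed by shared functionals and is single-valued. For the diagonal entry $\tau_{xx}$ on $F_{yz}$, the four vertex values, the edge moments on the four bounding edges, and the face moments \eqref{eq:dof-HdivT-new-f} number $4+4(k-2)+(k-2)^2=k^2=\dim\cQ_{k-1,k-1}(y,z)$ and are unisolvent on the face (the same boundary-plus-bubble argument carried out in two dimensions), so $\tau_{xx}|_{F_{yz}}$ is determined by shared data and is single-valued. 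Collecting these continuities over all faces yields $\bm\tau\in\bm H(\div;\bT)$.
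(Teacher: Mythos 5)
Your proof is correct and takes essentially the same route as the paper's: componentwise dimension counting, showing that the vertex, edge and face degrees of freedom force the traceless diagonal triple into the bubble space $\mathcal{B}_{\div,k;\bT}(T)$ so that the coupled interior functionals \eqref{eq:dof-HdivT-new-t} act as an inner product on that space, and reading off $\bm H(\div;\bT)$ conformity from the single-valuedness of $\tau_{xx}$ on $F_{yz}$, $\tau_{xy}$ on $F_{xz}$, and their cyclic counterparts. You merely fill in details the paper omits as straightforward (the off-diagonal unisolvency via the factor $y(1-y)$ and the two-dimensional trace unisolvency underlying conformity).
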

\begin{proof}
\textbf{Unisolvency of $(\tau_{xx}, \tau_{yy}, \tau_{zz})$}. For this case, the degrees of freedom of these three diagonal components are coupled. Note that the dimension of the shape function space of $(\tau_{xx}, \tau_{yy}, \tau_{zz})$ is $2k^3$. While the total number of degrees of freedom is 
$$ 16 + 24(k-2) + 6(k-2)^2 + 2(k-2)^2(k+1) = 2k^3,$$
equals to the dimension of $(\tau_{xx}, \tau_{yy}, \tau_{zz})$.
Hence, it suffices to show that if $(\tau_{xx}, \tau_{yy}, \tau_{zz})$ belonging to  $$ \{ (\tau_{xx},\tau_{yy},\tau_{zz}) \in \cQ_{k-1,k-1,k-1} \times \cQ_{k-1,k-1,k-1} \times \cQ_{k-1,k-1,k-1} ~:~ \tau_{xx} + \tau_{yy} + \tau_{zz} =  0 \}$$ vanishes for all the DOFs, then $\tau_{xx} = \tau_{yy} = \tau_{zz} = 0$.  By \eqref{eq:dof-HdivT-new-e}, \eqref{eq:dof-HdivT-new-f}, it holds that $\tau_{xx}$ vanishes on face $F_{yz}$, $\tau_{yy}$ vanishes on face $F_{xz}$ and $\tau_{zz}$ vanishes on face $F_{xy}$. Therefore, this yields $(\tau_{xx}, \tau_{yy}, \tau_{zz}) \in \mathcal{B}_{\div, k ; \bT}(T)$. At end, the fourth set of DOFs in \eqref{eq:dof-HdivT-new-t} indicates that $\tau_{xx} = \tau_{yy} = \tau_{zz} = 0$.

The unisolvency of $\tau_{xy}$ is straightforward and hence omitted.

Next, consider the $\bm H(\div; \bT)$ conformity of the resulting spacee. This is from (1) $\tau_{xx}$ is single-valued on $F_{yz}$, and (2) $\tau_{xy}$ is single-valued on $F_{xz}$, and the other conditions obtained via a cyclic permutation.
\end{proof}

The dimension of $\bm{\widetilde{\Xi}}_h$ is 

\begin{equation}
    \begin{aligned}
    &\dim \bm{\widetilde{\Xi}}_h =  [2\mathscr{V} + 2(k-2)\mathscr{E} + (k-2)^2\mathscr{F} +2(k-2)^2(k+1)\mathscr{T}]\\
    & + [ 2(k-1)k\mathscr{F} + 6(k-1)^2k\mathscr{T}].
    \end{aligned}
    \end{equation}

    \subsubsection{A new finite element complex and its exactness}

    For this case, the finite element subspace $\bm{\widetilde{Q}}_h$ of $\bm L^2(\Omega, \mathbb R^3)$ is the space of discontinuous piecewise polynomials.  For this element, the shape function space on $T$ is 

\begin{equation}
\begin{bmatrix} q_x \\ q_y \\ q_z \end{bmatrix} \in
\begin{bmatrix}
    \mathcal{DG}_{k-2,k-1,k-1} \\ \mathcal{DG}_{k-1,k-2,k-1} \\\mathcal{DG}_{k-1,k-1,k-2} \\ 
\end{bmatrix},
\end{equation}

This subsubsection aims at proving the following finite element sequence 
\begin{equation}
\label{eq:complex-gradgrad-D-new}
    \cP_1 \stackrel{\subset}\longrightarrow U_h \stackrel{\grad \grad}{\longrightarrow} \bm{\widetilde{\Sigma}}_h \stackrel{\curl}{\longrightarrow} \bm{\widetilde{\Xi}}_h \stackrel{\div}{\longrightarrow} \bm{\widetilde{Q}}_h \longrightarrow 0
\end{equation}
is exact.

Since the dimensions of the finite element space $U_h$, $\bm{\widetilde{\Sigma}}_h$, $\bm{\widetilde{\Xi}}_h$ and $\bm{\widetilde{Q}}_h$ are as follows:
\begin{equation*}
    \begin{aligned}
     \dim U_h = &  8\mathscr{V} + 4(k-3)\mathscr{E} + 2(k-3)^2\mathscr{F} + (k-3)^3\mathscr{T},\\
     \dim \bm{\widetilde \Sigma}_{h}= & [(k-1)\mathscr{E} + 2(k-1)^2\mathscr{F} + 3(k-1)^3\mathscr{T}]\\
     &+[6\mathscr{V} + 4(k-2)\mathscr{E} + (k-3)\mathscr{E} + (k-2)^2\mathscr{F} + 2(k-2)(k-3)\mathscr{F} + 3(k-2)^2(k-1)\mathscr{T}].
    \\
           \dim \bm{\widetilde{\Xi}}_h =  &[2\mathscr{V} + 2(k-2)\mathscr{E} + (k-2)^2\mathscr{F} +2(k-2)^2(k+1)\mathscr{T}]\\
            &  + [ 2(k-1)k\mathscr{F} + 6(k-1)^2k\mathscr{T}],
    \\
            \dim \bm{\widetilde{Q}}_h = &3(k-1)k^2 \mathscr{T},
    \end{aligned}
    \end{equation*}
this leads to
\begin{equation}
    \label{eq:dim-counting-gradgrad-new}
    \begin{split}
\dim U_h - \dim \bm {\widetilde{\Sigma}}_h + \dim \bm{\widetilde{\Xi}}_h - \dim \bm{\widetilde{Q}}_h = & 4(\mathscr{V} - \mathscr E + \mathscr F - \mathscr T) \\ = & 4 =  \dim  \bm{\mathcal{P}}_1,
    \end{split}
\end{equation}
by Euler's formula.

\begin{proposition}
    \label{prop:exactness-gradgrad-new-surjective}
	Suppose that $k\ge 3$, the discrete divergence operator $\div : \widetilde{\bm \Xi}_h \to \widetilde{\bm Q}_h$ is surjective.
\end{proposition}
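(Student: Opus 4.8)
The plan is to exploit the relaxed conformity of $\widetilde{\bm\Xi}_h$ to solve $\div\bm\tau = \bm q$ \emph{directly}, bypassing the bubble-function reduction used in \Cref{prop:exactness-gradgrad-surjective}. The key observation is that if one sets all three diagonal entries and three of the six off-diagonal entries to zero, the three scalar equations comprising $\div\bm\tau=\bm q$ decouple completely. Indeed, writing $\bm q=[q_x,q_y,q_z]^{\mathsf T}$ with $q_x\in\mathcal{DG}_{k-2,k-1,k-1}$ and cyclically, I would keep only $\tau_{xz},\tau_{yx},\tau_{zy}$ nonzero, so that the system reduces to the three independent identities $\pz\tau_{xz}=q_x$, $\px\tau_{yx}=q_y$, and $\py\tau_{zy}=q_z$; tracelessness is trivially satisfied since the diagonal vanishes, and the three off-diagonal unknowns are otherwise unconstrained.

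Next I would define each surviving entry as a \emph{global} one-variable antiderivative of the corresponding data, e.g. $\tau_{xz}(x,y,z):=\int_0^z q_x(x,y,s)\,ds$ and cyclically for $\tau_{yx}$ (in $x$) and $\tau_{zy}$ (in $y$). Two routine checks are then needed. First, a degree count: integrating $q_x\in\cQ_{k-2,k-1,k-1}$ in $z$ raises only the $z$-degree by one, landing in $\cQ_{k-2,k-1,k}$, which is exactly the shape space prescribed for the $(x,z)$-entry in \eqref{eq:shapefunc-HdivT-new}; the same matching holds cyclically. Second --- and this is where the global rather than element-by-element antiderivative is essential --- the function $\tau_{xz}$ is continuous in $z$ across the whole mesh, hence single-valued on every face $F_{xy}$, while it is permitted to jump across $F_{xz}$ and $F_{yz}$. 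This is precisely the conformity required of the $(x,z)$-component of $\widetilde{\bm\Xi}_h$, so that $\tau_{xz}$ has consistent face degrees of freedom and, by unisolvency, genuinely belongs to the space; the analogous statements hold for $\tau_{yx}$ and $\tau_{zy}$.

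Finally, since $\tau_{xz}$ is continuous in $z$, the derivative $\pz\tau_{xz}$ carries no spurious interface distribution and equals $q_x$ in $L^2$; as the remaining entries of the first row vanish, $(\div\bm\tau)_x=q_x$, and cyclically $(\div\bm\tau)_y=q_y$, $(\div\bm\tau)_z=q_z$. Hence $\bm\tau\in\widetilde{\bm\Xi}_h$ with $\div\bm\tau=\bm q$, establishing surjectivity. I do not expect a serious obstacle here: the content that in the non-reduced case forced the delicate vertex/edge/face matching of \Cref{prop:exactness-gradgrad-surjective} evaporates because $\widetilde{\bm Q}_h$ is fully discontinuous and each reduced off-diagonal entry is constrained to be single-valued across only one family of faces. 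The one point requiring care is the bookkeeping that the global antiderivative simultaneously stays in the local shape space and respects exactly the single-face conformity --- neither more nor less --- which the degree and continuity checks above confirm.
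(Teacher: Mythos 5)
Your proposal is correct and takes essentially the same route as the paper: the paper's proof likewise zeroes the diagonal, keeps one off-diagonal entry per row, and defines it as a global one-variable antiderivative of the corresponding component of $\bm q$ (the paper chooses $\tau_{xy}=\int_0^y q_x\,ds$ together with $\tau_{yz}$, $\tau_{zx}$, while you choose the mirror triple $\tau_{xz}$, $\tau_{yx}$, $\tau_{zy}$). Your degree bookkeeping and the single-family face-continuity check are exactly the content of the paper's brief verification that the constructed $\bm\tau$ lies in $\widetilde{\bm\Xi}_h$ with $\div\bm\tau=\bm q$.
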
 

\begin{proof}
    Given $\bm q = (q_x, q_y, q_z)^T \in \bm{\widetilde{Q}}_h$, 
set $\tau_{xy}$ by 
$
\tau_{xy}(x,y,z) = \int_{0}^y \bm q_x ds,
$
and similarly define $\tau_{zx}$ and $\tau_{yz}$. The remaining entries are set zero. Clearly, $\bm \tau$ is $\bm H(\div;\bT)$ conforming, and belongs to the space $\bm{\widetilde{\Xi}}_h$. In addition, $\div \bm \tau = \bm q$. 
\end{proof}

\begin{proposition}
    \label{prop:exactness-gradgrad-new-first}
For $k \ge 3$, if $\bm \sigma \in \bm{\widetilde{\Sigma}}_h$ satisfies that $\curl \bm \sigma = 0$, then there exists $u \in U_h$ such that $\grad \grad u = \bm \sigma$.
\end{proposition}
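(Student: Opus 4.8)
The plan is to mirror the argument of \Cref{prop:exactness-gradgrad-first}, exploiting the fact that the reduced regularity affects only the diagonal components of $\bm\sigma$ together with some dropped face moments, none of which enter the degrees of freedom of the BFS space $U_h$. First, since $\bm{\widetilde\Sigma}_h$ is $\bm H(\curl;\bS)$ conforming (established above) and $\curl\bm\sigma=0$, the exactness of the continuous complex \eqref{eq:complex-gradgrad-C} furnishes a function $u\in H^2(\Omega)$ with $\grad\grad u=\bm\sigma$. On each $T\in\cT_h$ the relations $\sigma_{xx}\in\cQ_{k-2,k,k}$, $\sigma_{yy}\in\cQ_{k,k-2,k}$, $\sigma_{zz}\in\cQ_{k,k,k-2}$ together with the off-diagonal relations force $u|_T\in\cQ_{k,k,k}$; combined with $u\in H^2(\Omega)$ this yields $u\in C^1(\Omega)$, exactly as in the full-regularity case.

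Second, I would split the degrees of freedom of $U_h$ into those controlled purely by $C^1$ continuity and those requiring the matrix $\bm\sigma$. The function value and gradient at vertices, the edge moments of $u$ and of its first derivatives, and the face moments $\int_F u\,p$ and $\int_F \partial_n u\,p$ of $u$ and its normal derivative, are all single-valued because $u$ and $\nabla u$ are continuous. It then remains only to verify single-valuedness of the mixed-derivative data, namely $\pxy u=\sigma_{xy}$, $\pxz u=\sigma_{xz}$, $\pyz u=\sigma_{yz}$ and $\pxyz u=\pz\sigma_{xy}$ at the vertices, and the edge moments $\int_{e_x}\pyz u\,p=\int_{e_x}\sigma_{yz}\,p$ for $p\in\cQ_{k-4}(x)$, together with their cyclic counterparts.

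Third, and this is the crux, I would observe that every one of these remaining degrees of freedom involves only the off-diagonal components of $\bm\sigma$, and that in $\bm{\widetilde\Sigma}_h$ the off-diagonal components retain exactly the same vertex and edge degrees of freedom as in $\bm\Sigma_h$: the vertex values $\sigma_{xy}(\bm x)$ and $\pz\sigma_{xy}(\bm x)$ of \eqref{eq:dof-HcurlS2-new-v}, and the edge moments \eqref{eq:dof-HcurlS2-new-ex}--\eqref{eq:dof-HcurlS2-new-ez}. Consequently $\sigma_{xy}$ and $\pz\sigma_{xy}$ are single-valued at each vertex, and the trace $\sigma_{yz}|_{e_x}\in\cQ_{k}(x)$ is pinned down across the mesh, its $k+1$ data being supplied by the two endpoint values of $\sigma_{yz}$ and $\partial_x\sigma_{yz}$ and the $k-3$ edge moments. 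The degrees of freedom relaxed in the reduction, i.e.\ the normal-derivative moments of the diagonal entries and the $F_{xy}$ moments of $\pz\sigma_{xy}$, never appear in this list, so the loss of regularity is harmless and $u\in U_h$. The main obstacle is precisely this bookkeeping: confirming that the relaxed data are disjoint from the cross-derivative degrees of freedom of $U_h$, and that the surviving off-diagonal vertex and edge data still determine the relevant vertex values and edge traces. Together with the dimension count \eqref{eq:dim-counting-gradgrad-new}, this gives $\ker\curl=\im\grad\grad$ and hence the exactness of \eqref{eq:complex-gradgrad-D-new}.
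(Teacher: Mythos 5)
Your proposal is correct and follows essentially the same route as the paper: obtain $u\in H^2(\Omega)$ from the continuous complex, note that $u$ is piecewise in $\cQ_{k,k,k}$ and hence $C^1(\Omega)$, and then verify the remaining mixed-derivative degrees of freedom of $U_h$ (vertex values $\pxy u=\sigma_{xy}$, $\pxyz u=\pz\sigma_{xy}$, etc., and the edge moments $\int_{e_x}\pyz u\,p=\int_{e_x}\sigma_{yz}\,p$) using the vertex and edge data of the off-diagonal components, which are unchanged in $\bm{\widetilde{\Sigma}}_h$. In fact your write-up is more detailed than the paper's proof, which simply cites the earlier argument and notes that single-valuedness of $\sigma_{yz}$ on $e_x$ is the only point to recheck.
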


\begin{proof}
The proof is similar to that in \Cref{prop:exactness-gradgrad-first}. Let a piecewise polynomial function $u \in H^2$ such that $\grad \grad u = \bm \sigma$.
Again, it suffices to check that $\pyz{u}$ is single-valued on edge $e_x$. This directly comes from the fact $\sigma_{yz}$ is single-valued on $e_x$.

\end{proof}

\begin{theorem}
The finite element sequence in \eqref{eq:complex-gradgrad-D-new} is an exact complex, provided $k \ge 3$.
\end{theorem}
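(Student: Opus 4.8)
The plan is to establish the exactness of the complex \eqref{eq:complex-gradgrad-D-new} by the same three-part strategy used for the full-regularity complex \eqref{eq:complex-gradgrad-D}: verify that the alternating sum of dimensions vanishes, prove surjectivity of the final divergence map, and prove $\ker \curl = \im \grad\grad$ at the left. Two of these three ingredients are already in hand. The dimension count \eqref{eq:dim-counting-gradgrad-new} gives $\dim U_h - \dim \widetilde{\bm\Sigma}_h + \dim \widetilde{\bm\Xi}_h - \dim \widetilde{\bm Q}_h = 4 = \dim\mathcal P_1$, which, once combined with exactness at the interior nodes of the sequence, will pin down the remaining cohomology to be trivial. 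Proposition~\ref{prop:exactness-gradgrad-new-surjective} supplies the surjectivity of $\div:\widetilde{\bm\Xi}_h\to\widetilde{\bm Q}_h$, and Proposition~\ref{prop:exactness-gradgrad-new-first} supplies $\ker\curl=\im\grad\grad$ for the first nontrivial segment.

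With these facts assembled, I would argue as follows. First, the composition of consecutive maps is zero: $\curl\,\grad\grad = 0$ and $\div\,\curl = 0$ hold pointwise for smooth-enough fields and therefore on each element, so \eqref{eq:complex-gradgrad-D-new} is genuinely a complex; the injectivity of $\mathcal P_1\hookrightarrow U_h$ on a connected domain follows since a linear function is determined by its vertex data. Exactness at $\widetilde{\bm Q}_h$ is Proposition~\ref{prop:exactness-gradgrad-new-surjective}, and exactness at $\widetilde{\bm\Sigma}_h$ is Proposition~\ref{prop:exactness-gradgrad-new-first}. It then remains only to check exactness at $U_h$ (i.e.\ $\ker\grad\grad = \mathcal P_1$, immediate since $\grad\grad u = 0$ forces $u$ affine) and at $\widetilde{\bm\Xi}_h$ (i.e.\ $\ker\div = \im\curl$). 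The latter is the one segment not yet addressed by a standalone proposition.

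The key observation is that exactness at $\widetilde{\bm\Xi}_h$ need not be proved directly: once the two endpoint exactness statements and the interior exactness at $\widetilde{\bm\Sigma}_h$ are known, the vanishing of the alternating dimension sum \eqref{eq:dim-counting-gradgrad-new} forces exactness at the remaining node automatically. Concretely, writing the Euler characteristic of the complex as the alternating sum of the dimensions of the cohomology groups and equating it with the alternating sum of space dimensions, one finds that all but possibly the $\widetilde{\bm\Xi}_h$-cohomology are already trivial; since $\im\curl\subseteq\ker\div$ always holds and the dimension bookkeeping leaves no room for a nontrivial quotient $\ker\div/\im\curl$, exactness at $\widetilde{\bm\Xi}_h$ follows. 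This is precisely the role played by \eqref{eq:dim-counting-gradgrad} in the proof of the earlier theorem, and the argument transports verbatim.

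The main obstacle I anticipate is making the dimension-counting closure fully rigorous rather than merely heuristic: one must confirm that the ranks and kernels align so that the single remaining unknown cohomology is squeezed to zero, which requires knowing that $\grad\grad U_h$ has the expected dimension $\dim U_h - 4$ and that $\div\widetilde{\bm\Xi}_h = \widetilde{\bm Q}_h$ has full rank. Both follow from the already-established injectivity of $\mathcal P_1\hookrightarrow U_h$, Proposition~\ref{prop:exactness-gradgrad-new-first}, and Proposition~\ref{prop:exactness-gradgrad-new-surjective}, so no genuinely new computation is needed; the work is purely organizational. I would therefore present the theorem's proof as a short paragraph citing \eqref{eq:dim-counting-gradgrad-new}, Proposition~\ref{prop:exactness-gradgrad-new-surjective}, and Proposition~\ref{prop:exactness-gradgrad-new-first}, and invoking the standard fact that a complex with vanishing Euler characteristic and exactness at all but one interior node is exact everywhere.
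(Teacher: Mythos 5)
Your proposal is correct and follows essentially the same route as the paper: the paper's proof is exactly the combination of the dimension identity \eqref{eq:dim-counting-gradgrad-new} with Proposition~\ref{prop:exactness-gradgrad-new-surjective} and Proposition~\ref{prop:exactness-gradgrad-new-first}, with exactness at the remaining node $\widetilde{\bm\Xi}_h$ squeezed out by the Euler-characteristic argument you describe. Your write-up merely makes explicit the bookkeeping that the paper leaves implicit.
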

\begin{proof}
By dimension counting in \eqref{eq:dim-counting-gradgrad-new}, the result follows from \Cref{prop:exactness-gradgrad-new-surjective} and \Cref{prop:exactness-gradgrad-new-first}.
\end{proof}

\section{Discrete elasticity complex}

This section considers four type of finite element spaces: $\bm H^1(\Omega)$ conforming space $V_h$, $\bm H(\operatorname{curl} \operatorname{curl}^{\mathsf{T}}; \mathbb{S})$ conforming space $\bm{\Phi}_h$,  $\bm H(\operatorname{div}; \mathbb{S})$ conforming space $\bm \Gamma_h $ and $\bm L^2(\Omega)$ space $\bm Z_h$. These finite element spaces will be used to construct

\begin{equation}
    \bm{\mathcal{RM}} \stackrel{\subseteq}{\longrightarrow} \bm X_h \stackrel{\operatorname{sym} \operatorname{grad}}{\longrightarrow} \bm{\Phi}_h \stackrel{ \operatorname{curl} \operatorname{curl}^{\mathsf{T}} }{\longrightarrow} \bm \Gamma_h \stackrel{\div}{\longrightarrow} \bm Z_h\longrightarrow 0,
    \end{equation}
    a discrete subcomplex on the cuboid grids of the following continuous elasticity complex,
    \begin{equation}\label{eq:complex-elasticity-C}
        \bm{\mathcal{RM}} \stackrel{\subseteq}{\longrightarrow} \bm H^{1}\left(\Omega ; \bR^3\right) \stackrel{\operatorname{sym} \operatorname{grad}}{\longrightarrow} \bm H(\operatorname{curl} \operatorname{curl}^{\mathsf{T}}, \Omega ; \mathbb{S}) \stackrel{\operatorname{curl} \operatorname{curl}^{\mathsf{T}}}{\longrightarrow} \bm H(\operatorname{div}, \Omega ; \mathbb{S}) \stackrel{\div}{\longrightarrow} \bm L^{2}(\Omega; \mathbb{R}^3) \longrightarrow 0.
        \end{equation}
    Here
    $
    \bm{\mathcal{RM}} = \{ \bm a + \bm b \times \bm x : \bm a, \bm b \in \mathbb{R}^3\}
    $ is the rigid motion space (dim = 6), the spaces 
    $$\bm H(\operatorname{curl} \operatorname{curl}^{\mathsf{T}}, \Omega ; \mathbb{S}) := \{ \bm u \in \bm L^2(\Omega; \bS); \curl \curl^{\mathsf T} \bm u \in \bm L^2(\Omega; \bS) \} $$
    and 
    $$\bm H(\div,\Omega; \bS) := \{ \bm \sigma \in \bm L^2(\Omega; \bS) : \div \bm \sigma \in \bm L^2(\Omega)\}.$$

The following vector identity will be used in the following proof. 
\begin{lemma}

It holds that 
\begin{equation} \label{eq:curlsymgrad} \curl \sym \grad \bm u = \frac{1}{2}(\grad \curl \bm u)^{\mathsf T},
\end{equation}
and therefore 
\begin{equation} \label{eq:curlTsymgrad} \curl ^{\mathsf T} \sym \grad \bm u = \frac{1}{2}\grad \curl \bm u.
\end{equation}
\end{lemma}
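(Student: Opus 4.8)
The plan is to verify both identities by a direct component-wise computation in Cartesian coordinates, the only genuine content being the cancellation produced by the commutativity of second-order partial derivatives. I would first fix conventions, writing $(\grad \bm u)_{ij} = \partial_j u_i$ so that the $i$-th row of $\grad \bm u$ is $\grad u_i$ and the row-wise action of $\curl$ is well defined; then $(\sym \grad \bm u)_{ij} = \tfrac{1}{2}(\partial_j u_i + \partial_i u_j)$. Throughout, $i,j$ range over $\{x,y,z\}$.

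To establish \eqref{eq:curlsymgrad}, I would fix a row index $i$ and apply the scalar curl to the $i$-th row of $\sym \grad \bm u$. For instance, its $x$-component is
\begin{equation*}
(\curl \sym \grad \bm u)_{ix} = \partial_y (\sym \grad \bm u)_{iz} - \partial_z (\sym \grad \bm u)_{iy} = \tfrac{1}{2}\,\partial_i(\partial_y u_z - \partial_z u_y) = \tfrac{1}{2}\,\partial_i (\curl \bm u)_x,
\end{equation*}
where the two terms carrying $u_i$ cancel by equality of mixed partials. Performing the analogous computation for the $y$- and $z$-components and comparing with $((\grad \curl \bm u)^{\mathsf T})_{ij} = \partial_i (\curl \bm u)_j$ shows that the $(i,j)$ entries of the two sides of \eqref{eq:curlsymgrad} coincide, which proves the first identity.

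The second identity \eqref{eq:curlTsymgrad} then follows from the first with no further computation. Since $\sym \grad \bm u$ is symmetric, $(\sym \grad \bm u)^{\mathsf T} = \sym \grad \bm u$, so by the definition $\curl^{\mathsf T} A = (\curl A^{\mathsf T})^{\mathsf T}$ one has
\begin{equation*}
\curl^{\mathsf T} \sym \grad \bm u = (\curl \sym \grad \bm u)^{\mathsf T} = \Bigl(\tfrac{1}{2}(\grad \curl \bm u)^{\mathsf T}\Bigr)^{\mathsf T} = \tfrac{1}{2}\,\grad \curl \bm u.
\end{equation*}

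I do not expect any substantive obstacle here: the statement is an algebraic identity between first-order matrix differential operators, and everything reduces to bookkeeping. The only point requiring real care is maintaining a consistent convention for the index ordering in $\grad \bm u$ and for the row-wise versus column-wise actions of $\curl$ and $\curl^{\mathsf T}$; once these are fixed, both identities are immediate consequences of the cancellation of the symmetric (in $i$) contribution through commuting second derivatives.
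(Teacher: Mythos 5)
Your proof is correct, and it takes a somewhat different route from the paper's. The paper argues structurally: it decomposes $\sym \grad \bm u = \tfrac{1}{2}\grad \bm u + \tfrac{1}{2}(\grad \bm u)^{\mathsf T}$, annihilates the first term using $\curl \grad \bm u = 0$ (each row of $\grad \bm u$ is the gradient of a scalar), and rewrites the second term via the transpose rule $\curl (\grad \bm u)^{\mathsf T} = (\curl^{\mathsf T} \grad \bm u)^{\mathsf T}$ combined with the auxiliary identity $\curl^{\mathsf T} \grad \bm u = \grad \curl \bm u$, which the paper leaves as a ``straightforward calculation.'' You instead verify the entries of \eqref{eq:curlsymgrad} directly; the cancellation of the mixed partials $\partial_y \partial_z u_i - \partial_z \partial_y u_i$ in your computation is exactly where the paper's $\curl \grad \bm u = 0$ does its work, and your explicit evaluation plays the role of the identity the paper defers. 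What the paper's organization buys is modularity and reusability of the intermediate identity $\curl^{\mathsf T} \grad \bm u = \grad \curl \bm u$; what yours buys is self-containedness, since every step is actually computed rather than delegated. Your derivation of \eqref{eq:curlTsymgrad} from \eqref{eq:curlsymgrad} --- transposing both sides and invoking the symmetry of $\sym \grad \bm u$ together with the definition $\curl^{\mathsf T} A = (\curl A^{\mathsf T})^{\mathsf T}$ --- is precisely the step the paper compresses into ``and therefore,'' so the two proofs coincide there.
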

\begin{proof}
A straightforward calculation yields that 
$$ \curl^{\mathsf T} \grad \bm u = \grad \curl \bm u.$$ Then,
\begin{equation*}
\begin{split}
\curl \sym \grad \bm u = & \frac{1}{2} \curl \grad \bm u + \frac{1}{2}  \curl (\grad \bm u)^{\mathsf T}  \\ & = 
\frac{1}{2}  (\curl^{\mathsf T} \grad \bm u)^{\mathsf T} \\&  = \frac{1}{2} (\grad \curl \bm u)^{\mathsf T}.
\end{split}
\end{equation*}
\end{proof}

\subsection{Local version: the polynomial complex}
In this subsection, the local version of the finite element complex is constructed with the following form, (where $T = [0,1]^3$)
\begin{equation}
    \label{eq:complex-elasticity-poly}
    \bm{\mathcal{RM}} \stackrel{\subseteq}{\longrightarrow} \bm X_T \stackrel{\operatorname{sym} \operatorname{grad}}{\longrightarrow} \bm{\Phi}_T \stackrel{ \operatorname{curl} \operatorname{curl}^{\mathsf{T}} }{\longrightarrow} \bm \Gamma_T \stackrel{\div}{\longrightarrow} \bm Z_T\longrightarrow 0,
    \end{equation}
    where $k \ge 2$, and
    $$\bm X_T = \begin{bmatrix}
        \cQ_{k,k+1,k+1} \\ \cQ_{k+1,k,k+1} \\\cQ_{k+1,k+1,k} \\ 
    \end{bmatrix} ,$$
    $$\bm \Phi_T = \begin{bmatrix}
        \cQ_{k-1,k+1,k+1} & \cQ_{k,k,k+1} & \cQ_{k,k+1,k}\\
        \cQ_{k,k,k+1} & \cQ_{k+1,k-1,k+1} & \cQ_{k+1,k,k} \\
        \cQ_{k,k+1,k} & \cQ_{k+1,k,k} & \cQ_{k+1,k+1,k-1}
    \end{bmatrix} ,$$
    $$\bm \Gamma_T =     \begin{bmatrix}
        \cQ_{k+1,k-1,k-1} & \cQ_{k,k,k-1} & \cQ_{k,k-1,k}\\
        \cQ_{k,k,k-1} & \cQ_{k-1,k+1,k-1} & \cQ_{k-1,k,k} \\
        \cQ_{k,k-1,k} & \cQ_{k-1,k,k} & \cQ_{k-1,k-1,k+1}
    \end{bmatrix},$$
    and 
    $$\bm Z_T =     \begin{bmatrix} \cQ_{k,k-1,k-1} \\ \cQ_{k-1,k,k-1} \\\cQ_{k-1,k-1,k} \end{bmatrix}. $$

Clearly, the above sequence is a complex. The following proposition indicates that the complex is exact.
\begin{proposition}
The polynomial sequence \eqref{eq:complex-elasticity-poly} is an exact complex. 
\end{proposition}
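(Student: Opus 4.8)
The plan is to establish exactness by combining three directly-verified facts with a dimension count, mirroring the proof of the polynomial gradgrad complex above. First I would record that \eqref{eq:complex-elasticity-poly} is genuinely a complex: $\curl\curl^{\mathsf T}\sym\grad=0$ follows from \eqref{eq:curlTsymgrad} together with $\curl\grad(\cdot)=0$, while $\div\curl\curl^{\mathsf T}=0$ follows from the row-wise identity $\div\curl=0$; the inclusions $\sym\grad\,\bm X_T\subseteq\bm\Phi_T$, $\curl\curl^{\mathsf T}\bm\Phi_T\subseteq\bm\Gamma_T$ and $\div\bm\Gamma_T\subseteq\bm Z_T$ are checked by a routine degree count on each entry. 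Then I would dispatch the two end nodes. Surjectivity of $\div\colon\bm\Gamma_T\to\bm Z_T$ is explicit: given $\bm z=(z_x,z_y,z_z)^{\mathsf T}\in\bm Z_T$, the diagonal symmetric matrix with entries $\sigma_{xx}=\int_0^x z_x\,ds$, $\sigma_{yy}=\int_0^y z_y\,ds$, $\sigma_{zz}=\int_0^z z_z\,ds$ lies in $\bm\Gamma_T$ (for instance $\int_0^x z_x\,ds\in\cQ_{k+1,k-1,k-1}$) and satisfies $\div\bm\sigma=\bm z$. Exactness at $\bm X_T$, i.e.\ $\ker(\sym\grad)=\bm{\mathcal{RM}}$, is the classical rigid-motion computation together with the trivial inclusion $\bm{\mathcal{RM}}\subseteq\bm X_T$.

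The core of the proof is exactness at $\bm\Phi_T$: if $\bm\phi\in\bm\Phi_T$ is symmetric with $\curl\curl^{\mathsf T}\bm\phi=0$, then $\bm\phi=\sym\grad\,\bm u$ for some $\bm u\in\bm X_T$. My approach is a two-step potential argument on the contractible reference cube. Since $\curl(\curl^{\mathsf T}\bm\phi)=0$ row-wise, polynomial de Rham exactness gives $\curl^{\mathsf T}\bm\phi=\grad\bm g$; taking the matrix trace and using that $\operatorname{tr}\curl\bm\phi=0$ for symmetric $\bm\phi$ yields $\div\bm g=\operatorname{tr}\grad\bm g=0$, so a second application of de Rham exactness produces $\bm u_0$ with $\curl\bm u_0=2\bm g$. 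Then \eqref{eq:curlTsymgrad} gives $\curl^{\mathsf T}(\bm\phi-\sym\grad\,\bm u_0)=\grad\bm g-\grad\bm g=0$, and since $\bm\eta:=\bm\phi-\sym\grad\,\bm u_0$ is symmetric this forces $\curl\bm\eta=0$. Here I would invoke the key structural observation that $\bm\Phi_T$ coincides with the gradgrad space $\bm\Sigma_T$ evaluated at degree $k+1$, so the polynomial gradgrad exactness proved above applies and gives $\bm\eta=\grad\grad s=\sym\grad(\grad s)$; hence $\bm\phi=\sym\grad(\bm u_0+\grad s)$.

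Finally, since the alternating sum $\dim\bm{\mathcal{RM}}-\dim\bm X_T+\dim\bm\Phi_T-\dim\bm\Gamma_T+\dim\bm Z_T$ evaluates to $0$ (a direct computation of the entrywise $\cQ$-dimensions, each a product of three factors), exactness at the three nodes above forces exactness at the one remaining node $\bm\Gamma_T$, that is $\ker\div=\im(\curl\curl^{\mathsf T})$. This is the same bookkeeping device used for \eqref{eq:dim-counting-gradgrad}, and it lets me avoid a separate Saint-Venant–type construction at $\bm\Gamma_T$.

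The main obstacle I anticipate is the polynomial bookkeeping in the core step: the potentials $\bm g$, $\bm u_0$ and $s$ are produced by de Rham and gradgrad exactness only as polynomials, and one must check that the resulting displacement $\bm u=\bm u_0+\grad s$ can be taken inside $\bm X_T$ — equivalently, that after subtracting a suitable rigid motion its components have exactly the prescribed anisotropic degrees $\cQ_{k,k+1,k+1}$, $\cQ_{k+1,k,k+1}$, $\cQ_{k+1,k+1,k}$. This is the one place where the precise tensor-product structure of the spaces, rather than a soft homological argument, must be used, and I would handle it by reading off the entrywise degrees of $\sym\grad\,\bm u=\bm\phi$ and integrating the diagonal and off-diagonal relations component by component.
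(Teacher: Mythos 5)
Your proposal is correct and follows essentially the same route as the paper: the same explicit antiderivative construction for surjectivity of $\div$, the same two-step potential argument for exactness at $\bm \Phi_T$ (de Rham exactness giving $\curl^{\mathsf T}\bm\sigma = \grad \bm a$, tracelessness forcing $\div \bm a = 0$ and hence a curl potential, the identity \eqref{eq:curlTsymgrad}, and the polynomial gradgrad exactness via $\bm\Phi_T = \bm\Sigma_T$ at degree $k+1$), and the dimension count to close the remaining node at $\bm\Gamma_T$. Your explicit attention to whether the resulting displacement potential actually lies in $\bm X_T$ addresses a degree-bookkeeping point the paper passes over silently, but it is a refinement of, not a departure from, the paper's argument.
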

\begin{proof}
By the choice of the shape function spaces, the polynomial sequence is a complex. It remains to prove its exactness.
First, for $\bm q \in \bm Z_T$, define $\bm v \in \bm \Gamma_T$ such that $v_{xx}(x,y,z) = \int_{0}^x q_x ds$. The components $v_{yy}$ and $v_{zz}$ are similarly defined, and the off-diagonal components (e.g. $v_{xy}$) are defined as zero. Clearly, it holds that $\div \bm v = \bm q$.

Second, suppose that $\bm \sigma \in \bm \Phi_T$ such that $\curl \curl^{\mathsf T} \bm \sigma = 0$. Then there exists a vector-valued polynomial $\bm a$ such that
$\curl^{\mathsf T}  \bm \sigma = \grad \bm a$. Since $\curl^{\mathsf T}  \bm \sigma $ is a traceless matrix, therefore $\grad \bm a$ is traceless, which implies $ \div\bm a = 0$. Therefore there exists a vector-valued polynomial $\bm u$ such that $\bm a = \curl \bm u$. It then follows from \eqref{eq:curlTsymgrad} that 
$$\curl^{\mathsf T} \bm \sigma = \grad \curl \bm u =2 \curl^{\mathsf T} \sym \grad \bm u.$$
By the local exactness of the gradgrad complex, there exists a polynomial $\phi$ such that $\bm \sigma - 2\sym \grad \bm u = \grad \grad \phi$, therefore $\bm \sigma = \sym \grad(2\bm u + \grad \phi).$ A combination of these two results and the dimension counting (admitted here, and will be shown in \eqref{eq:dimcounting-elasticity}, 
implies the exactness.
\end{proof}
\subsection{\texorpdfstring{$\bm H^{1}$}{H1} conforming space}
This subsection considers the construction of an $\bm H^1$ conforming finite element vector-valued space $\bm X_h$ on $\mathcal T_h$. For this element, the shape function space on $T$ is as follows:
\begin{equation}
    \label{eq:shapefunc-H1R3}
\begin{bmatrix} u_x \\ u_y \\ u_z \end{bmatrix} \in
\begin{bmatrix}
    \cQ_{k,k+1,k+1} \\ \cQ_{k+1,k,k+1} \\\cQ_{k+1,k+1,k} \\ 
\end{bmatrix} := \bm X_T.
\end{equation}

The degrees of freedom of $u_x$ are defined as follows: 
\begin{enumerate}
\item The function value and the following partial derivatives of $u_x$, at each vertex $\bm x$ of $T$, 
\begin{equation}
\label{eq:dof-H1V2-v} u_x(\bm x), \py u_x(\bm x), \pz u_x(\bm x), \pyz u_x(\bm x).\end{equation}
\item The moments of function itself and partial derivatives of $u_x$ on each edge $e_x$ of $T$, 
\begin{equation}
\label{eq:dof-H1V2-ex}
\int_{e_x} u_xp~dx, \int_{e_x} \py  u_xp~dx, \int_{e_x} \pz  u_xp~dx , \int_{e_x} \pyz u_xp ~dx \for p \in \cQ_{k-2}(x).
\end{equation}
\item The moments of function itself and partial derivatives of $u_x$ on each edge $e_y,e_z$ of $T$, 
\begin{equation}
\label{eq:dof-H1V2-ey}
\int_{e_y} u_xp~dx, \int_{e_y} \pz u_xp~dx, \for p \in \cQ_{k-3}(y) 
\end{equation}
\begin{equation}
\label{eq:dof-H1V2-ez}
    \int_{e_z} u_xp~dx,\int_{e_z} \py u_xp~dx,  \for p \in \cQ_{k-3}(z).
\end{equation}
\item The moments of $u_x$ on each face $F_{yz}$, 
\begin{equation}
\label{eq:dof-H1V2-fyz}
\int_{F_{yz}} u_xp ~dydz \for p \in \cQ_{k-3,k-3}(y,z).
\end{equation}
\item The moments of function itself and partial derivatives of $u_x$ on $F_{xy}$ and $F_{xz}$ of $T$,
\begin{equation}
\label{eq:dof-H1V2-fxy}
    \int_{F_{xy}} u_xp ~dxdy,  \int_{F_{xy}} \pz u_xp ~dxdy \for p \in \cQ_{k-2,k-3}(x,y) ,
\end{equation}    
\begin{equation}
\label{eq:dof-H1V2-fxz}\int_{F_{xz}} u_xp ~dxdz,  \int_{F_{xz}} \py u_xp ~dxdz \for p \in \cQ_{k-2,k-3}(x,z).
\end{equation}
\item The moment of $u_x$ inside the element $T$,
\begin{equation}
\label{eq:dof-H1V2-t}
\int_{T} u_xp ~dxdydz \for p \in \cQ_{k-2,k-3,k-3}(x,y,z).
\end{equation}
\end{enumerate}

The degrees of freedom of the remaining components ($u_y$ and $u_z$) can be similarly defined, by a cyclic permutation on the index.
\begin{proposition}
    When $k \ge 2$, then the degrees of freedom defined above are unisolvent with respect to the shape function space \eqref{eq:shapefunc-H1R3}, and the resulting finite element space $\bm X_h$ is $\bm H^1$ conforming.
\end{proposition}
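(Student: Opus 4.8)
The plan is to recognize the shape function space and degrees of freedom for $u_x$ as a genuine tensor product, which makes both unisolvency and conformity transparent. Note that $\cQ_{k,k+1,k+1}=\cQ_k(x)\otimes\cQ_{k+1,k+1}(y,z)$, and I claim the degrees of freedom \eqref{eq:dof-H1V2-v}--\eqref{eq:dof-H1V2-t} are exactly the tensor products of the one-dimensional Lagrange functionals of degree $k$ in $x$ (the two endpoint values together with $\int(\cdot)\,p\,dx$ for $p\in\cQ_{k-2}(x)$) with the two-dimensional Bogner--Fox--Schmit functionals of degree $k+1$ on the $(y,z)$-square (vertex values of $u,\py u,\pz u,\pyz u$; moments of $u$ and its in-plane normal derivative on each edge against $\cQ_{k-3}$; and interior moments against $\cQ_{k-3,k-3}$). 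First I would verify this correspondence term by term: an $x$-endpoint paired with a planar vertex, $e_y$/$e_z$-edge, or interior functional reproduces \eqref{eq:dof-H1V2-v}, the pair \eqref{eq:dof-H1V2-ey}/\eqref{eq:dof-H1V2-ez}, and \eqref{eq:dof-H1V2-fyz}; an $x$-moment paired with the same planar functionals reproduces \eqref{eq:dof-H1V2-ex}, the pair \eqref{eq:dof-H1V2-fxy}/\eqref{eq:dof-H1V2-fxz}, and \eqref{eq:dof-H1V2-t}, since $F_{xy}$ is the product of an $x$-interval with an $e_y$-edge at fixed $z$ (and likewise for $F_{xz}$).

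Granting the correspondence, unisolvency is immediate. The dimension count gives $\dim\cQ_{k,k+1,k+1}=(k+1)(k+2)^2$, and summing the functionals listed above reproduces this number, so it suffices to prove the vanishing statement. Let $\{\phi_i(x)\}$ be the basis of $\cQ_k(x)$ dual to the one-dimensional Lagrange functionals and expand $u_x=\sum_i\phi_i(x)\,c_i(y,z)$ with $c_i\in\cQ_{k+1,k+1}(y,z)$. Applying a functional of the form (Lagrange-$i$)$\otimes$(planar BFS functional) to $u_x$ returns that planar functional evaluated at $c_i$; hence if $u_x$ annihilates all degrees of freedom, then each $c_i$ annihilates every two-dimensional BFS functional, so $c_i=0$ by the unisolvency of the BFS element, and $u_x=0$. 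The same argument applies to $u_y$ and $u_z$ after the cyclic relabelling, giving unisolvency on $\bm X_T$.

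For $\bm H^1$ conformity it suffices to show that each component has single-valued trace across every face, i.e. that $u_x|_F$ is fixed by the functionals geometrically carried by $F$ and its boundary. On a face $F_{yz}$ the trace lies in $\cQ_{k+1,k+1}(y,z)$ and is pinned down by the full planar BFS data at that fixed $x$ (the vertex, $e_y$, $e_z$ and $F_{yz}$ functionals), while continuity in $x$ is the $C^0$ matching of the one-dimensional Lagrange factor. On a face $F_{xy}$ (respectively $F_{xz}$) the trace lies in $\cQ_{k,k+1}(x,y)$ and is fixed by the $x$-Lagrange data tensored with the BFS restriction to the edge $z=\text{const}$ (respectively $y=\text{const}$); since the BFS element is $C^1$, its edge trace (function value and tangential derivative) is single valued, which already forces $C^0$ matching of $u_x$. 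As all these functionals are shared by the two elements meeting at $F$, the function $u_x$ is globally continuous; by cyclic symmetry so are $u_y,u_z$, whence $\bm X_h\subset\bm H^1(\Omega)$.

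The one genuinely delicate point, and where I expect the real work, is the term-by-term verification that the planar factor is honestly the BFS element rather than a full three-dimensional Hermite structure: the two face families \eqref{eq:dof-H1V2-fxy} and \eqref{eq:dof-H1V2-fxz} prescribe only $u_x$ together with the single normal derivative ($\pz u_x$ on $F_{xy}$, $\py u_x$ on $F_{xz}$), matching BFS edge moments tensored with $x$-moments. This asymmetry is essential: a naive direct face-by-face peeling does not force $\pz u_x=0$ on $F_{xy}$ from the face moments alone, because after using boundary vanishing one only reaches $\pz u_x=x(1-x)y(1-y)g$ with $g\in\cQ_{k-2,k-1}(x,y)$, while the available test space is merely $\cQ_{k-2,k-3}(x,y)$, leaving a gap of two in the $y$-degree. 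The tensor-product bookkeeping is precisely what resolves this coupling and makes the reduction go through.
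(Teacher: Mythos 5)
Your tensor--product argument is correct, and it is a genuinely different route from the paper's proof. The correspondence you set up is exact: the degrees of freedom \eqref{eq:dof-H1V2-v}--\eqref{eq:dof-H1V2-t} for $u_x$ are precisely the products of the one--dimensional degree-$k$ Lagrange functionals in $x$ with the two--dimensional degree-$(k+1)$ BFS functionals in $(y,z)$, and your dual-basis expansion $u_x=\sum_i\phi_i(x)c_i(y,z)$ then reduces unisolvency to the planar BFS element, while conformity follows because each face trace is determined by functionals attached to the closure of that face. The paper instead runs the direct ``peeling'' argument it uses throughout (vertices $\to$ edges $\to$ faces $\to$ interior): it shows $u_x$ and the relevant derivatives vanish successively on the skeleton, concludes $u_x=x(1-x)y^2(1-y)^2z^2(1-z)^2u_2$ with $u_2\in\cQ_{k-2,k-3,k-3}$, and finishes with the interior moments \eqref{eq:dof-H1V2-t}. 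Your version is structurally cleaner and explains \emph{why} the DOF pattern is what it is; the paper's version is more elementary and matches the style of all the other unisolvency proofs in the paper.

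However, the ``genuinely delicate point'' in your last paragraph is a misdiagnosis, and it is worth correcting because the approach you claim fails is exactly the one the paper takes. You assert that direct peeling only yields $\pz u_x=x(1-x)y(1-y)g$ on $F_{xy}$ with $g\in\cQ_{k-2,k-1}(x,y)$, leaving a two-degree gap against the test space $\cQ_{k-2,k-3}(x,y)$. This overlooks the mixed-derivative data: by \eqref{eq:dof-H1V2-v} and \eqref{eq:dof-H1V2-ex}, the restriction $\pyz u_x|_{e_x}\in\cQ_k(x)$ vanishes at both endpoints and has vanishing moments against $\cQ_{k-2}(x)$, hence $\pyz u_x\equiv 0$ on the two edges $e_x$ of $F_{xy}$. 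Therefore $\pz u_x$ vanishes to second order at $y=0,1$ (and to first order at $x=0,1$, using \eqref{eq:dof-H1V2-v} and \eqref{eq:dof-H1V2-ey}), which gives
\begin{equation*}
\pz u_x|_{F_{xy}}=x(1-x)\,y^2(1-y)^2\,g,\qquad g\in\cQ_{k-2,k-3}(x,y),
\end{equation*}
and the face moments \eqref{eq:dof-H1V2-fxy} then annihilate $g$. So the direct face-by-face reduction closes with no gap; the tensor structure is an elegant reorganization of the same information, not an essential ingredient.
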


\begin{proof}
The number of degrees of freedom defined for $u_x$ is 
$$ 4 + 16(k-1) + 16(k-2) + 2(k-2)^2 + 4(k-1)(k-2) + (k-1)(k-2)^2 = (k+1)(k+2)^2,$$
which is equal to the dimension of $\cQ_{k,k+1,k+1}$.
It then suffices to show that if $u_x \in \cQ_{k,k+1,k+1}$ vanishes at the degrees of freedom defined above, then $u_x = 0$. 
It follows from \eqref{eq:dof-H1V2-v} and \eqref{eq:dof-H1V2-ex} that $$u_{x},\frac{\partial}{\partial y}u_{x},\frac{\partial}{\partial z}u_{x}, \frac{\partial^2}{\partial y\partial z}u_{x}$$  vanish at all vertices and edges $e_{x}$. 
Then it follows from \eqref{eq:dof-H1V2-ey} and \eqref{eq:dof-H1V2-ez} that $u_{x},\frac{\partial}{\partial z}u_{x}$ vanish on edges $e_{y}$ and $u_{x},\frac{\partial}{\partial y}u_{x}$ vanish on edges $e_{z}$. 
Therefore, on any face, say $F_{yz}$, the restriction $u_{x}|_{F_{yz}}$ can be expressed as 
$$u_{x}|_{F_{yz}} = y^2(1-y)^2z^2(1-z)^2u_1, u_1 \in \cQ_{k-3,k-3}(y,z).$$
Due to \eqref{eq:dof-H1V2-fyz}, it is easy to see that $u_{x}$ vanishes on $F_{yz}$. Similarly, $u_{x}$ vanishes on all faces and $\frac{\partial}{\partial z}u_{x}$ vanishes on face $F_{xy}$, $\frac{\partial}{\partial y}u_{x}$ vanishes on face $F_{xz}$, by  \eqref{eq:dof-H1V2-fxy} and \eqref{eq:dof-H1V2-fxz}. Hence $u_{x}=x(1-x)y^2(1-y)^2z^2(1-z)^2u_2$ for some $u_2 \in \cQ_{k-2,k-3,k-3}$ and consequently is zero, by \eqref{eq:dof-H1V2-t}.

\end{proof}

The dimension of $\bm X_h$ is 
\begin{equation}
\dim \bm X_h = 12 \mathscr{V} + [4(k-1) + 4(k-2) ] \mathscr{E} + [(k-2)^2+ 4(k-1)(k-2)] \mathscr{F} + 3(k-1)(k-2)^2 \mathscr{T}.
\end{equation}

\begin{remark}
As it will be shown later, $\bm X_h$ is actually $\bm H^1(\curl)$ conforming.
\end{remark}
\subsection{\texorpdfstring{$\bm H(\operatorname{curl} \operatorname{curl}^{\mathsf{T}} ; \mathbb{S})$}{H(curl curl;S)} conforming space}
The discrete 
$\bm H(\curl \curl^{\mathsf{T}};\bS)$ conforming space $\bm \Phi_h$ (of degree $k$) is taken exactly as the proposed $\bm H(\curl;\bS)$ conforming space $\bm \Sigma_h$ of degree $k+1$, see \Cref{sec:HcurlS}.
The following lemma shows that the proposed $\bm \Sigma_h$ has $\bm H(\curl \curl^{\mathsf{T}} ; \bS)$ regularity.
\begin{lemma}[$\bm H(\curl \curl^{\mathsf{T}};\bS)$ conformity]
     If a symmetric matrix-valued piecewise polynomial $\bm \sigma$ satisfies that
    \begin{itemize}
    \item The diagonal component $\sigma_{xx}$ is single-valued across $e_x$ of $\mathcal T_h$, $\sigma_{xx}$ and $\frac{\partial \sigma_{xx}}{\partial \bm n}$ is single-valued across $F_{xy}$ and $F_{xz}$ of $\mathcal T_h$. A corresponding condition holds for $\sigma_{yy}$ and $\sigma_{zz}$. 
    \item The off-diagonal component $\sigma_{xy}$ is continuous on $\mathcal T_h$, $\pz \sigma_{xy}$ is single-valued across $F_{xy}$ of $\mathcal T_h$. A corresponding condition holds for $\sigma_{xz}$ and $\sigma_{yz}$.
    \end{itemize}
    then $\bm \sigma \in \bm H(\curl \curl^{\mathsf{T}}; \bS)$.
\end{lemma}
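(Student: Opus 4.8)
The plan is to show that, under the stated hypotheses, the distribution $W:=\curl\curl^{\mathsf T}\bm\sigma$ carries no singular part along the skeleton of $\mathcal T_h$, so that it agrees with the element-by-element (piecewise polynomial, hence $\bm L^2(\bS)$) value. Since $\curl\curl^{\mathsf T}$ is second order and $\bm\sigma$ is piecewise smooth, $W$ can differ from its piecewise value only through distributions supported on the interior faces (of the form $c\,\delta_F$ and $c\,\partial_{\bm n}\delta_F$) and on the interior edges (of the form $c\,\delta_e$, produced by mixed second derivatives transverse to the edge). No vertex contributions can occur: producing a codimension-three singularity would require three distinct transverse derivatives, i.e.\ a third-order operator. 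Hence it suffices to verify, face by face and edge by edge, that every such coefficient $c$ vanishes, and by localization together with the cyclic symmetry of the construction it is enough to treat one coordinate face, say $F_{xy}$ with normal $\bm e_z$, and one edge, say $e_x$.

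The first concrete step is to expand $W$ componentwise (this is the incompatibility operator on symmetric matrices). A direct computation gives
\begin{equation*}
\begin{aligned}
W_{xx}&=\partial_y^2\sigma_{zz}-2\partial_y\partial_z\sigma_{yz}+\partial_z^2\sigma_{yy},\\
W_{yy}&=\partial_z^2\sigma_{xx}-2\partial_x\partial_z\sigma_{xz}+\partial_x^2\sigma_{zz},\\
W_{zz}&=\partial_x^2\sigma_{yy}-2\partial_x\partial_y\sigma_{xy}+\partial_y^2\sigma_{xx},
\end{aligned}
\end{equation*}
together with analogous formulas for $W_{xy},W_{xz},W_{yz}$, each entry being a sum of second partial derivatives of the entries of $\bm\sigma$. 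On $F_{xy}$ only the terms containing $\partial_z$ can create a face distribution: a term $\partial_z^2\sigma_{ab}$ contributes a double layer with coefficient the jump of $\sigma_{ab}$ and a single layer with coefficient the jump of $\partial_z\sigma_{ab}$, whereas a mixed term $\partial_z\partial_x\sigma_{ab}$ or $\partial_z\partial_y\sigma_{ab}$ contributes only a single layer whose coefficient is a tangential derivative of the jump of $\sigma_{ab}$.

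Reading off the coefficients, the double layers on $F_{xy}$ arise exactly from $\partial_z^2\sigma_{yy}$ (in $W_{xx}$), $\partial_z^2\sigma_{xx}$ (in $W_{yy}$), and $\partial_z^2\sigma_{xy}$ (in $W_{xy}$); they vanish precisely when $\sigma_{xx},\sigma_{yy},\sigma_{xy}$ are single-valued across $F_{xy}$ and, for the associated single layers, when $\partial_z\sigma_{xx},\partial_z\sigma_{yy},\partial_z\sigma_{xy}$ are single-valued across $F_{xy}$ — which is exactly the content of the diagonal normal-derivative hypothesis and of the off-diagonal hypothesis on $\partial_z\sigma_{xy}$. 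The remaining mixed single layers involve only the jumps of $\sigma_{xx},\sigma_{yy}$ (continuous across $F_{xy}$) and of the off-diagonal entries (continuous everywhere); note that $\sigma_{zz}$ never appears with a factor $\partial_z$, consistently with the fact that no condition on $\sigma_{zz}$ across $F_{xy}$ is imposed. Thus no face distribution survives on $F_{xy}$, and cyclically on $F_{yz}$ and $F_{xz}$.

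It remains to control the edge terms, and this is where the edge hypothesis on the diagonal components is used. The only second derivatives that can produce a distribution on $e_x$ are the mixed transverse derivatives $\partial_y\partial_z$; among the entries of $W$ these act on $\sigma_{xx}$ (e.g.\ the term $-\partial_y\partial_z\sigma_{xx}$ in $W_{yz}$) and on the off-diagonal entries. The off-diagonal contributions vanish because those entries are fully continuous, while the $\sigma_{xx}$ contribution is a line distribution along $e_x$ whose coefficient is the alternating corner jump of $\sigma_{xx}$ around $e_x$, which vanishes precisely because $\sigma_{xx}$ is single-valued across $e_x$; this also explains why no such edge condition appears in the weaker $\bm H(\curl;\bS)$ criterion of \Cref{lem:HcurlS-conformity}. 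Combining the face and edge analyses and their cyclic images shows $W$ has no singular part, hence $W\in\bm L^2(\Omega;\bS)$ and $\bm\sigma\in\bm H(\curl\curl^{\mathsf T};\bS)$. I expect the main obstacle to be purely organizational: correctly matching the asymmetric hypothesis list to the specific (component, derivative, face-or-edge) triples appearing in the incompatibility expansion; once the expansion above is written out, this matching is mechanical.
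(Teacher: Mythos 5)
Your proposal is correct and takes essentially the same route as the paper: expand $\curl\curl^{\mathsf T}\bm\sigma$ componentwise into second partial derivatives of the entries of $\bm\sigma$, then check term by term that the hypothesized continuity kills every jump contribution across interior faces and, for the mixed derivatives such as $\partial_y\partial_z\sigma_{xx}$, across interior edges. The paper phrases this as integration by parts against test functions—with face terms $[\![\sigma]\!]$, $[\![\partial\sigma/\partial\bm n]\!]$ and an edge term of the form $\int_{\mathcal E_x}[\![\sigma_{yz}]\!]p$—which is exactly the test-function formulation of your single-layer/double-layer/edge-delta bookkeeping.
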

\begin{proof}
Set $\bm v = \curl \curl^{\mathsf{T}} \bm \sigma$, then a direct calculation yields that 
$$v_{xx} = \frac{\partial^2 \sigma_{yy}}{\partial z^2} + \frac{\partial^2 \sigma_{zz}}{\partial y^2} - 2\frac{\partial^2 \sigma_{yz}}{\partial y\partial z},$$
$$ v_{xy} =  \frac{\partial^2 \sigma_{xy}}{\partial z^2} + \frac{\partial ^2 \sigma_{zz}}{\partial x \partial y} - \frac{\partial^2 \sigma_{xz}}{\partial y \partial z} - \frac{\partial^2 \sigma_{yz}}{\partial x \partial z}.$$

By assumption it holds that $$\int_{\mathcal T} \frac{\partial^2 \sigma_{yy}}{\partial z^2} p = \int_{\mathcal T}  \frac{\partial^2 p}{\partial z^2} \sigma_{yy} + \int_{\mathcal F_{xy}} [\![ \frac{\partial \sigma_{yy}}{\partial z} ]\!] p + \int_{\mathcal F_{xy}} [\![ \sigma_{yy}]\!] \frac{\partial p}{\partial z} =  \int_{\mathcal T}\frac{\partial^2 p}{\partial z^2} \sigma_{yy}.$$
$$\int_{\mathcal T} \frac{\partial^2 \sigma_{yz}}{\partial y\partial z} p = \int_{\mathcal T} \frac{\partial^2 p}{\partial y\partial z} \sigma_{yz} + \int_{\mathcal F_{xy}} [\![\sigma_{yz}]\!] p + \int_{\mathcal F_{xz}} [\![\sigma_{yz}]\!] p + \int_{\mathcal E_{x}} [\![\sigma_{yz}]\!] p = \int_{\mathcal T} \frac{\partial^2 p}{\partial y\partial z} \sigma_{yz}.$$

The other components are calculated similarly. As a result, it can be concluded that $\curl \curl^{\mathsf T} \bm \sigma \in \bm L^2$.
\end{proof}

From this lemma and the proof of \Cref{sec:HcurlS}, one can easily derive the following proposition.

\begin{proposition}
When $k \ge 3$, the space $\bm \Phi_h$ is $\bm H(\curl \curl^{\mathsf{T}}; \bS)$ conforming.
\end{proposition}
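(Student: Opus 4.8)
The plan is to deduce the statement directly from the preceding $\bm H(\curl\curl^{\mathsf T};\bS)$ conformity lemma, by checking that the degrees of freedom defining $\bm \Phi_h$ enforce exactly its two bullet conditions. Since $\bm \Phi_h$ is by construction the space $\bm \Sigma_h$ of \Cref{sec:HcurlS} taken at polynomial degree $k+1$, its degrees of freedom are precisely \eqref{eq:dof-HcurlS-e}--\eqref{eq:dof-HcurlS2-t} with the substitution $k \mapsto k+1$. No fresh unisolvency argument is needed: the unisolvency already established in \Cref{prop:uni-HcurlS} guarantees that the trace data attached to each sub-entity really are the interelement-shared data, so that agreement of these degrees of freedom on a shared face forces agreement of the relevant traces.

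First I would handle the diagonal components. For $\sigma_{xx}$, the edge moments \eqref{eq:dof-HcurlS-e} on $e_x$ determine $\sigma_{xx}$ along $e_x$, so $\sigma_{xx}$ is single-valued across $e_x$. The face moments \eqref{eq:dof-HcurlS-fxy} fix both $\sigma_{xx}$ and $\pz \sigma_{xx}$ on $F_{xy}$, and \eqref{eq:dof-HcurlS-fxz} fix both $\sigma_{xx}$ and $\py \sigma_{xx}$ on $F_{xz}$. Since the outward normal on $F_{xy}$ points in the $z$-direction and on $F_{xz}$ in the $y$-direction, this is precisely the assertion that $\sigma_{xx}$ and $\partial \sigma_{xx}/\partial \bm n$ are single-valued across $F_{xy}$ and $F_{xz}$. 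The components $\sigma_{yy}$ and $\sigma_{zz}$ follow by cyclic permutation, so the first bullet of the lemma holds.

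Next I would treat the off-diagonal components, where for $\sigma_{xy}$ I need (i) full continuity across every face and (ii) single-valuedness of $\pz\sigma_{xy}$ across $F_{xy}$. For both claims the key observation is that the relevant trace on a face $F$ lies in a tensor-product polynomial space, and the vertex values \eqref{eq:dof-HcurlS2-v}, the moments on the edges bounding $F$ taken from \eqref{eq:dof-HcurlS2-ex}--\eqref{eq:dof-HcurlS2-ez}, and the interior face moments \eqref{eq:dof-HcurlS2-fxy}--\eqref{eq:dof-HcurlS2-fyz} together form a unisolvent set for that trace --- this is exactly the two-dimensional unisolvency argument already carried out inside the proof of \Cref{prop:uni-HcurlS}. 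Applying it to $\sigma_{xy}$ itself on each of $F_{xy}$, $F_{xz}$, $F_{yz}$ yields continuity of $\sigma_{xy}$, and applying it to $\pz\sigma_{xy}$ on $F_{xy}$ (whose vertex, bounding-edge, and face moments are furnished by the $\pz$-entries of \eqref{eq:dof-HcurlS2-v}, \eqref{eq:dof-HcurlS2-ex}, \eqref{eq:dof-HcurlS2-ey}, and \eqref{eq:dof-HcurlS2-fxy}) shows $\pz\sigma_{xy}$ is single-valued across $F_{xy}$. The cases $\sigma_{xz}$ and $\sigma_{yz}$ follow by permutation, so the second bullet holds.

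With both bullets verified, the preceding conformity lemma gives $\curl\curl^{\mathsf T}\bm\sigma \in \bm L^2(\Omega;\bS)$ for every $\bm\sigma \in \bm\Phi_h$, which is the claim. The only point that genuinely demands care --- and the step I expect to carry the real content --- is claim (i), namely that the vertex, bounding-edge, and face moments exhaust the trace $\sigma_{xy}|_F$; but this is precisely the face-localized version of the unisolvency already proved for $\bm\Sigma_h$, so after the degree shift $k\mapsto k+1$ it reduces to bookkeeping of the polynomial degrees rather than a new argument.
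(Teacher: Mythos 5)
Your proposal is correct and follows essentially the same route as the paper: the paper's proof consists precisely of invoking the $\bm H(\curl\curl^{\mathsf T};\bS)$ conformity lemma together with the degree-of-freedom/unisolvency structure established in \Cref{sec:HcurlS} for $\bm\Sigma_h$ (applied at degree $k+1$), which is exactly what you verify. Your write-up simply makes explicit the trace-unisolvency bookkeeping (vertex, bounding-edge, and face moments determining each face trace) that the paper leaves to the reader.
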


\subsection{\texorpdfstring{$\bm H(\operatorname{div}; \mathbb{S})$}{H(div;S)} conforming space}

The $\bm H(\div;\bS)$ conforming space $\bm \Gamma_h$ is taken as the $\bm H(\div \div; \bS)$ conforming space in \cite{hu2022new}.
The shape function spaces of $\bm \Gamma_h$ on $T$ is taken as 
\begin{equation}
    \label{eq:shapefunc-HdivdivS}
    \begin{bmatrix}
    \sigma_{xx} & \sigma_{xy} & \sigma_{xz} \\
    \sigma_{yx} & \sigma_{yy} & \sigma_{yz} \\
    \sigma_{zx} & \sigma_{zy} & \sigma_{zz}
    \end{bmatrix}
    \in
    \begin{bmatrix}
        \cQ_{k+1,k-1,k-1} & \cQ_{k,k,k-1} & \cQ_{k,k-1,k}\\
        \cQ_{k,k,k-1} & \cQ_{k-1,k+1,k-1} & \cQ_{k-1,k,k} \\
        \cQ_{k,k-1,k} & \cQ_{k-1,k,k} & \cQ_{k-1,k-1,k+1}
    \end{bmatrix}  =: \bm \Gamma_T.
    \end{equation}

    \paragraph{The degrees of freedom on $\sigma_{xx}$} is defined as follows:
\begin{enumerate}
\item The moments of $\sigma_{xx}$ and $\px \sigma_{xx}$ on the face $F_{yz}$,
\begin{equation}
    \label{eq:dof-HdivdivS-fyz}
\int_{F_{yz}} \sigma_{xx} p~dydz  ~~~\int_{F_{yz}} \px\sigma_{xx} p~dydz \for p\in \cQ_{k-1,k-1}(y,z).
\end{equation}
\item The moments of $\sigma_{xx}$ inside the element $T$,
\begin{equation}
    \label{eq:dof-HdivdivS-t}
\int_T \sigma_{xx} \for p \in \cQ_{k-3,k-1,k-1}(x,y,z).
\end{equation}
\end{enumerate}

\paragraph{The degrees of freedom on $\sigma_{xy}$} is defined as follows:
\begin{enumerate}
\item The moments of $\sigma_{xy}$ on each edge $e_z$ of $T$, 
\begin{equation}
    \label{eq:dof-HdivdivS2-ez}
\int_{e_z} \sigma_{xy} p ~dz \for p \in \cQ_{k-1}(z).
\end{equation}
\item The moments of $\sigma_{xy}$ on each face $F_{xz}$ and $F_{yz}$ of $T$,
\begin{equation}
    \label{eq:dof-HdivdivS2-f}
\int_{F_{xz}} \sigma_{xy} p~dxdz \for p \in \cQ_{k-2,k-1}(x,z), \int_{F_{yz}} \sigma_{xy} p~dydz \for p \in \cQ_{k-2,k-1}(y,z).
\end{equation}
\item The moments of $\sigma_{xy}$ inside the element $T$,
\begin{equation}
    \label{eq:dof-HdivdivS2-t}
\int_T \sigma_{xy} p~dxdydz \for p \in \cQ_{k-2,k-2,k-1}(x,y,z).
\end{equation}

\end{enumerate}

The proof of unisolvency and $\bm H(\div;\bS)$ conformity is classical, and actually shown in \cite{hu2022new}.

The dimension of $\bm \Theta_h$ is 
$$k \mathscr{E} +  [2k^2 + 2k(k-1)] \mathscr{F} +  [3k^2(k-2) + 3k(k-1)^2] \mathscr{T}.$$

\subsection{\texorpdfstring{$\bm L^2$}{L2} conforming space}

This section constructs an $\bm L^2$ finite element space $\bm Z_h$. For this element, the shape function space on $T$ is 
\begin{equation}
    \label{eq:shapefunc-L2R3}
\begin{bmatrix} q_x \\ q_y \\ q_z \end{bmatrix} \in
\begin{bmatrix}
    \cQ_{k,k-1,k-1} \\ \cQ_{k-1,k,k-1} \\\cQ_{k-1,k-1,k} \\ 
\end{bmatrix} =: \bm Z_T.
\end{equation}
The degrees of freedom of $q_x$ are defined as follows:
\begin{enumerate}
\item The moment of $q_x$ on each face $F_{yz}$ of $T$,
\begin{equation}
\int_{F_{yz}} q_x p~dydz \for p \in \cQ_{k-1,k-1}(y,z).
\end{equation}
\item The moment of $q_x$ inside $T$,
\begin{equation}
\int_T q_x p~dxdydz \for p \in \cQ_{k-2,k-1,k-1}(x,y,z).
\end{equation}
\end{enumerate}

The degrees of freedom of $q_y$ and $q_z$ can be similarly defined as those of $q_x$, by a cyclic permutation. The verification of the unisolvency of $\bm Z_h$ is straightforward and hence omitted.
The dimension of $\bm Z_h$ is
$$\dim \bm Z_h =  k^2 \mathscr{F} + 3(k-1)k^2 \mathscr{T}.$$

\subsection{Finite element complex and its exactness}
This section proves that the following finite element sequence
\begin{equation}\label{eq:elasticitycomplex-dis}
    \bm{\mathcal{RM}} \stackrel{\subseteq}{\longrightarrow} \bm X_h \stackrel{\operatorname{sym} \operatorname{grad}}{\longrightarrow} \bm{\Phi}_h \stackrel{ \operatorname{curl} \operatorname{curl}^{\mathsf{T}} }{\longrightarrow} \bm \Gamma_h \stackrel{\div}{\longrightarrow} \bm Z_h\longrightarrow 0,
    \end{equation}
is an exact complex.

Since there hold the following dimensions of these spaces $\bm X_h, \bm \Phi_h, \bm \Gamma_h, \bm Z_h$ defined in the previous subsections,

\begin{equation*}
    \begin{aligned}
        \dim  \bm {X}_h = &  12 \mathscr{V} + [4(k-1) + 4(k-2) ] \mathscr{E} + [(k-2)^2+ 4(k-1)(k-2)] \mathscr{F} + 3(k-1)(k-2)^2 \mathscr{T},\\
        \dim \bm{\Phi}_h = &[4k\mathscr{E} + 4k(k-2)\mathscr{F} + 3k(k-2)^2\mathscr{T}]\\
        &+[6\mathscr{V} + 4(k-1)\mathscr{E} + (k-2)\mathscr{E} + 2(k-1)^2\mathscr{F} + 2(k-1)(k-2)\mathscr{F} + 3(k-1)^2(k-2)\mathscr{T}],
    \\
            \dim \bm{\Gamma}_h =  &k \mathscr{E} +  [2k^2 + 2k(k-1)] \mathscr{F} +  [3k^2(k-2) + 3k(k-1)^2] \mathscr{T},
    \\
            \dim \bm Z_h = &k^2 \mathscr{F} + 3(k-1)k^2 \mathscr{T},
    \end{aligned}
    \end{equation*}

this leads to

\begin{equation}
    \label{eq:dimcounting-elasticity}
    \begin{split}
\dim \bm X_h - \dim \bm \Phi_h + \dim \bm \Gamma_h - \dim \bm Z_h = & 6(\mathscr{V} - \mathscr{E} + \mathscr{F} - \mathscr{T}) \\ = & 6  = \dim \bm{\mathcal{RM}}
    \end{split}
\end{equation}
by Euler's formula.

The following result holds, indicating that the discrete divergence operator is surjective.
\begin{proposition}
\label{prop:exactness-elasticity-surjective}
The discrete divergence operator:
$\div: \bm \Gamma_h \to \bm Z_h$ is surjective.
\end{proposition}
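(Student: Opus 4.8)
The plan is to mirror the two-step strategy used for the discrete gradgrad complex in \Cref{prop:exactness-gradgrad-surjective}: first correct the face data of $\bm q$ by a conforming tensor assembled only from the diagonal components, then absorb the remaining element-interior part by a local right inverse of $\div$. Throughout I use that $\bm\Gamma_h$ is $\bm H(\div;\bS)$-conforming and that the sequence is a complex, so $\div\bm\Gamma_h\subseteq\bm Z_h$, and that each component $q_x$ is single-valued (hence normal-continuous) across the faces $F_{yz}$, which is forced by the shared face degrees of freedom of $\bm Z_h$. For each element $T$ I introduce the bubble spaces $\mathring{\bm\Gamma}(T):=\{\bm\sigma\in\bm\Gamma_T:\text{all DOFs on }\partial T\text{ vanish}\}$ and $\mathring{\bm Z}(T):=\{\bm q\in\bm Z_T:\text{all face DOFs vanish and }\int_T\bm q=\bm 0\}$.

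The first ingredient is a bubble lemma asserting $\div\mathring{\bm\Gamma}(T)=\mathring{\bm Z}(T)$, proved exactly as \Cref{lem:exactness-gradgrad-surjective-bubble}. Given $\bm q\in\mathring{\bm Z}(T)$, its vanishing trace and zero mean permit a lift $\bm\sigma_0\in\bm H^1(T;\bS)$ with vanishing trace on $\partial T$ and $\div\bm\sigma_0=\bm q$; I then define $\bm\sigma\in\mathring{\bm\Gamma}(T)$ by matching the interior moments of $\bm\sigma_0$ against the interior test spaces of $\bm\Gamma_T$, and an integration by parts against $\bm Z_T$ forces $\div\bm\sigma=\bm q$. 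The degree bookkeeping implicit in the local exactness of \eqref{eq:complex-elasticity-poly} guarantees that the interior moments carry enough information for this to close.

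The second ingredient is the global boundary correction. Given $\bm q\in\bm Z_h$, I set all off-diagonal components and all diagonal components except the one under consideration to zero, and build $\sigma_{xx}$ (and cyclically $\sigma_{yy},\sigma_{zz}$) from its face degrees of freedom only: on every face $F_{yz}$ I prescribe the normal-derivative moments $\int_{F_{yz}}\px\sigma_{xx}\,p=\int_{F_{yz}}q_x\,p$ for $p\in\cQ_{k-1,k-1}(y,z)$, a single-valued assignment because $q_x$ is normal-continuous across $F_{yz}$; since the off-diagonals vanish, $(\div\bm\sigma_1)_x=\px\sigma_{xx}$ and the face moments match. To also capture the element mean I prescribe the value moments so that $\int_{F_{yz}^+}\sigma_{xx}-\int_{F_{yz}^-}\sigma_{xx}=\int_T q_x$ across each element, setting the remaining value moments and all interior moments to zero. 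By the divergence theorem $\int_T(\div\bm\sigma_1-\bm q)_x=0$, while the face moments of $\div\bm\sigma_1-\bm q$ vanish by construction, so $(\div\bm\sigma_1-\bm q)|_T\in\mathring{\bm Z}(T)$. Applying the bubble lemma to $\bm q-\div\bm\sigma_1$ produces $\tilde{\bm\sigma}\in\bm\Gamma_h$ with $\div\tilde{\bm\sigma}=\bm q-\div\bm\sigma_1$, and then $\div(\bm\sigma_1+\tilde{\bm\sigma})=\bm q$.

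The main obstacle I anticipate is the global consistency of the value-moment assignment: the per-element difference conditions $\int_{F_{yz}^+}\sigma_{xx}-\int_{F_{yz}^-}\sigma_{xx}=\int_T q_x$ must be realized by a single set of shared face DOFs of a globally $\bm H(\div;\bS)$-conforming field. This is solvable by sweeping along each mesh line in the $x$-direction, fixing the datum on a boundary face and propagating inward; the contractibility of $\Omega$ rules out any obstruction, and the normal continuity encoded in the face DOFs of $\bm Z_h$ is precisely what makes the normal-derivative assignment single-valued. The only other point needing care is verifying that the interior lift in the bubble lemma lands in the polynomial space $\bm\Gamma_T$ with the correct coordinate degrees, which again follows from the local polynomial exactness already established.
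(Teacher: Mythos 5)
Your two-step strategy breaks at the bubble lemma, and the failure is structural rather than technical: for \emph{symmetric} matrix fields the divergence of the local bubble space is constrained by the whole rigid motion space, not just by constants. Indeed, if $\bm\sigma\in\mathring{\bm\Gamma}(T)$, vanishing of all boundary degrees of freedom forces $\bm\sigma\bm n=0$ on $\partial T$ (on a face $F_{yz}$ the traces of $\sigma_{xx}$, $\sigma_{xy}$, $\sigma_{xz}$ are completely determined by the edge DOFs on $e_z$, $e_y$ and the face DOFs of $\bm\Gamma_h$), and then for every rigid motion $\bm v\in\bm{\mathcal{RM}}$, integration by parts and the symmetry of $\bm\sigma$ give
\begin{equation*}
\int_T \div\bm\sigma\cdot\bm v \,dx \;=\; -\int_T \bm\sigma:\sym\grad\bm v\,dx \;=\; 0 .
\end{equation*}
Hence $\div\mathring{\bm\Gamma}(T)$ is $L^2$-orthogonal to all six rigid motions, whereas your $\mathring{\bm Z}(T)$ imposes only the three mean-value conditions. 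Concretely, $\bm q=\bigl(x(1-x)(y-\tfrac12),\,0,\,0\bigr)^{\mathsf T}$ lies in your $\mathring{\bm Z}(T)$ for every $k\ge 2$ (it vanishes on both faces $F_{yz}$ and has zero mean), yet $\int_T\bm q\cdot(-y,x,0)^{\mathsf T}\,dx=-\tfrac{1}{72}\neq 0$, so it is not the divergence of any bubble. For the same reason your claimed lift $\bm\sigma_0\in\bm H^1(T;\bS)$ with vanishing trace and $\div\bm\sigma_0=\bm q$ does not exist under the zero-mean hypothesis alone: vanishing trace plus symmetry force $\div\bm\sigma_0\perp\bm{\mathcal{RM}}$. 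This rigid-motion obstruction is exactly the classical difficulty of $\bm H(\div;\bS)$ elements, and it is why the argument for the traceless gradgrad complex that you are mirroring does not transfer.

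The downstream consequence is that your global correction cannot close: $\bm\sigma_1$ matches face moments and element means of $\bm q$, but never the angular momenta $\int_T\bm q\times\bm x$, so the remainder $\bm q-\div\bm\sigma_1$ generally lies outside $\div\mathring{\bm\Gamma}(T)$. The paper sidesteps bubbles entirely and argues as in \cite{hu2014simple}: given $\bm q\in\bm Z_h$, set $\sigma_{xx}(x,y,z)=\int_0^x q_x(s,y,z)\,ds$, $\sigma_{yy}=\int_0^y q_y\,ds$, $\sigma_{zz}=\int_0^z q_z\,ds$ and all off-diagonal components to zero. Then $\div\bm\sigma=\bm q$ identically (no correction step is needed), $\sigma_{xx}|_T\in\cQ_{k+1,k-1,k-1}$, and $\bm\sigma\in\bm\Gamma_h$ because both $\sigma_{xx}$ (a primitive in $x$) and $\px\sigma_{xx}=q_x$ are single-valued across the faces $F_{yz}$ --- the latter by the shared face DOFs of $\bm Z_h$ that you correctly identified. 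If you insist on a two-step argument, you would have to enlarge the correction step so that $\bm\sigma_1$ also matches the three rotational moments on every element, and replace your bubble lemma by a characterization of $\div\mathring{\bm\Gamma}(T)$ as the rigid-motion-orthogonal subspace; the one-shot integration above is precisely how the paper avoids doing this.
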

\begin{proof}
The argument is similar to that in \cite{hu2014simple}. 
Consider the symmetric matrix-valued piecewise polynomial function $\bm \sigma$ such that 
\begin{equation}
\sigma_{xx} = \int_{0}^x q_x ds,\,\, \sigma_{yy} = \int_{0}^y q_y ds, \,\,
\sigma_{zz} = \int_{0}^z q_z ds.
\end{equation}
Set $\sigma_{xy} = \sigma_{xz} = \sigma_{yz} = 0$. 
Clearly, it holds that $\bm \sigma |_T \in \bm \Phi_T$, it then suffices to check the continuity. By such a construction, $\tau_{xx}$ is continuous on each face $F_{yz}$. It directly follows from the facts that $\px{\sigma_{xx}} = q_x$ and $q_x$ is continuous when crossing $F_{yz}$, which completes the proof.
\end{proof}

The following proposition shows that the kernel of the discrete $\curl \curl^{\mathsf T}$ operator is just the image of $\sym \grad$.
\begin{proposition}
\label{prop:exactness-elasticity-first}
    If $ \bm \sigma \in \bm \Phi_h$ satisfies that 
    $\curl \curl^{\mathsf{T}} \bm \sigma = 0$, then there exists $\bm v \in \bm X_h$ such that $\bm\sigma = \sym \grad \bm v$.
\end{proposition}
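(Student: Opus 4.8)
The plan is to mirror the argument of \Cref{prop:exactness-gradgrad-first}. Since $\bm\sigma \in \bm\Phi_h \subset \bm H(\curl\curl^{\mathsf T},\Omega;\bS)$ with $\curl\curl^{\mathsf T}\bm\sigma = 0$, the exactness of the continuous complex \eqref{eq:complex-elasticity-C} furnishes some $\bm v \in \bm H^1(\Omega;\bR^3)$ with $\sym\grad\bm v = \bm\sigma$. It then remains to verify that this $\bm v$ actually lies in $\bm X_h$: first that $\bm v|_T \in \bm X_T$ on each $T$, and second that $\bm v$ is single-valued at every degree of freedom of $\bm X_h$ across the internal vertices, edges and faces of $\mathcal T_h$.

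For the local polynomiality, on each $T$ one has $\curl\curl^{\mathsf T}(\bm\sigma|_T) = 0$, so the local exactness of \eqref{eq:complex-elasticity-poly} yields $\bm w \in \bm X_T$ with $\sym\grad\bm w = \bm\sigma|_T$. Then $\sym\grad(\bm v|_T - \bm w) = 0$, so $\bm v|_T - \bm w \in \bm{\mathcal{RM}} \subset \bm X_T$, whence $\bm v|_T \in \bm X_T$ is a polynomial of the required degrees.

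The main obstacle, and the crux of the proof, is to establish that the vorticity $\bm\omega := \curl\bm v$ is globally continuous; this is the discrete counterpart of the remark that $\bm X_h$ is $\bm H^1(\curl)$ conforming. I would argue as follows. Taking the distributional form of the identity \eqref{eq:curlsymgrad} for $\bm v \in \bm H^1$ gives $\curl\bm\sigma = \curl\sym\grad\bm v = \tfrac12(\grad\bm\omega)^{\mathsf T}$; since $\bm\sigma \in \bm\Phi_h = \bm\Sigma_h$ (of degree $k+1$) is $\bm H(\curl;\bS)$ conforming, $\curl\bm\sigma \in \bm L^2$, so $\grad\bm\omega \in \bm L^2$ and hence $\bm\omega \in \bm H^1(\Omega)$. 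Being a piecewise polynomial, $\bm\omega$ is therefore single-valued across all faces, edges and vertices of $\mathcal T_h$.

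With $\bm v$ and $\bm\omega$ both continuous and the componentwise single-valuedness of $\bm\sigma$ recorded in \Cref{lem:HcurlS-conformity} (all off-diagonal $\sigma_{ij}$ single-valued across every face, and $\pz\sigma_{xy}$, $\px\sigma_{yz}$, $\py\sigma_{xz}$ single-valued at vertices and along the relevant edges), the remaining degrees of freedom are matched by algebra. The first-order data obey $\py v_x = \sigma_{xy} - \tfrac12\omega_z$, $\pz v_x = \sigma_{xz} + \tfrac12\omega_y$ and their cyclic images, so every vertex, edge and normal-derivative face degree of freedom of $\bm X_h$ is single-valued. For the mixed third-order data I would use that $\pz\sigma_{xy} = \tfrac12(\pyz v_x + \pxz v_y)$, $\px\sigma_{yz} = \tfrac12(\pxz v_y + \pxy v_z)$ and $\py\sigma_{xz} = \tfrac12(\pyz v_x + \pxy v_z)$ form an invertible linear system for $(\pyz v_x, \pxz v_y, \pxy v_z)$, so these too are single-valued. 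Combined with the continuity of $\bm v$ for the pure function-value degrees of freedom, this shows $\bm v \in \bm X_h$ with $\sym\grad\bm v = \bm\sigma$, completing the proof.
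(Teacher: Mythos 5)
Your proposal is correct and takes essentially the same route as the paper's own proof: continuous exactness of \eqref{eq:complex-elasticity-C} produces $\bm v$, local exactness gives $\bm v|_T\in\bm X_T$, the identity \eqref{eq:curlsymgrad} combined with the $\bm H(\curl;\bS)$ conformity of $\bm \Phi_h$ shows $\curl\bm v\in\bm H^1$ and hence is continuous, and the remaining derivative degrees of freedom are recovered as linear combinations of single-valued quantities. The only cosmetic differences are that the paper writes down the explicit solution $\pyz u_x = \py \sigma_{xz} + \pz \sigma_{xy} - \px \sigma_{yz}$ of your $3\times 3$ system, and attributes the single-valuedness of those terms to the degrees of freedom of $\bm \Sigma_h$ rather than to \Cref{lem:HcurlS-conformity}.
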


\begin{proof}
It follows from \eqref{eq:complex-elasticity-C} that $\curl \curl^{\mathsf{T}} \bm \sigma = 0$ implies that there exists a function $\bm u \in \bm H^1(\Omega)$ such that 
$$(\sym \grad {\bm u}) = \bm \sigma.$$
It follows from the local discrete complex that each ${\bm u}|_K$ is a polynomial and $\bm{u}|_{K}\in \bm{X}_{K}$, for each element $K$. Since ${\bm u} \in \bm H^1(\Omega)$, $\bm{u}$ is continuous when crossing the internal faces. Additionally, since $\bm \Phi_h$ is $\bm H(\curl)$ conforming by Proposition~\ref{prop:uni-HcurlS} , which implies $$\curl \sym \grad \bm u \in \bm L^2(\Omega; \mathbb M).$$
By \eqref{eq:curlsymgrad} $$\curl \sym \grad \bm u = \frac{1}{2} (\grad \curl \bm u)^{\mathsf T} ,$$ it shows that $\curl u \in \bm H^1(\Omega)$. Since $\curl \bm u$ is a piecewise polynomial, it then implies that $\curl \bm u $ is continuous. 

It follows from both 
$[\curl \bm u]_{x} = \py u_z - \pz u_y$  and $[\sym \grad u]_{yz} = \py u_z + \pz u_y$ are continuous that so are $\py u_z$ and 
$\pz u_y$. Similarly, it holds that $\py u_x, \pz u_x$, $\px u_y$, $\px u_z$ are continuous. 

It remains to show that $\pyz u_x$ are single-valued at each vertex of $\bm x$, which follows from a simple calculation
$$\pyz u_x = \frac{1}{2}[\py \sigma_{xz} + \pz \sigma_{xy} - \px \sigma_{yz}].$$ Since the three terms on the right hand side are single-valued at each vertex, by the degrees of freedom, $\pyz u_x$ is also single-valued.
\end{proof}


\begin{theorem}
    When $k \ge 2$, the finite element sequence \eqref{eq:elasticitycomplex-dis} is an exact complex.
\end{theorem}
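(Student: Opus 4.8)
The plan is to establish exactness separately at each of the four interior terms $\bm X_h$, $\bm\Phi_h$, $\bm\Gamma_h$, and $\bm Z_h$, combining the two propositions already proved with the dimension identity \eqref{eq:dimcounting-elasticity}. First I would record that \eqref{eq:elasticitycomplex-dis} is genuinely a complex: the inclusion $\bm{\mathcal{RM}}\hookrightarrow\bm X_h$ is well defined because rigid motions are degree-$\le 1$ polynomials and hence lie in $\bm X_T\subset\bm X_h$, while the vanishing of the successive compositions $\curl\curl^{\mathsf T}\circ\sym\grad$ and $\div\circ\curl\curl^{\mathsf T}$, together with the containments $\sym\grad\,\bm X_h\subseteq\bm\Phi_h$ and $\curl\curl^{\mathsf T}\bm\Phi_h\subseteq\bm\Gamma_h$, is inherited from the exact local polynomial complex \eqref{eq:complex-elasticity-poly} and the conformity of the spaces.

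Next I would dispatch the three exactness statements that follow directly. Exactness at $\bm X_h$ amounts to $\ker(\sym\grad|_{\bm X_h})=\bm{\mathcal{RM}}$; since $\bm X_h\subset\bm H^1(\Omega)$ and the only $\bm H^1$ fields annihilated by $\sym\grad$ are rigid motions, and $\bm{\mathcal{RM}}\subset\bm X_h$, the kernel is exactly $\bm{\mathcal{RM}}$. Exactness at $\bm\Phi_h$, that is, $\ker(\curl\curl^{\mathsf T}|_{\bm\Phi_h})=\im(\sym\grad)$, is precisely Proposition~\ref{prop:exactness-elasticity-first}. Exactness at $\bm Z_h$ is the surjectivity of $\div:\bm\Gamma_h\to\bm Z_h$, which is Proposition~\ref{prop:exactness-elasticity-surjective}.

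The only remaining point is exactness at $\bm\Gamma_h$, namely $\ker(\div|_{\bm\Gamma_h})=\im(\curl\curl^{\mathsf T})$; the inclusion $\supseteq$ holds because \eqref{eq:elasticitycomplex-dis} is a complex, so it suffices to match dimensions. Here I would run the standard rank--nullity bookkeeping: applying the rank--nullity theorem at $\bm X_h$, $\bm\Phi_h$, and $\bm\Gamma_h$ and using the three facts above, the alternating sum $\dim\bm{\mathcal{RM}}-\dim\bm X_h+\dim\bm\Phi_h-\dim\bm\Gamma_h+\dim\bm Z_h$ telescopes to exactly $\dim\im(\curl\curl^{\mathsf T})-\dim\ker(\div|_{\bm\Gamma_h})$. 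Since \eqref{eq:dimcounting-elasticity} shows this alternating sum vanishes, the two dimensions agree, and together with the known inclusion this yields $\im(\curl\curl^{\mathsf T})=\ker(\div|_{\bm\Gamma_h})$, completing the proof.

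I would expect no serious obstacle, since every analytic ingredient (the surjectivity, the kernel characterization at $\bm\Phi_h$, and the dimension formulas) has been established beforehand, so the argument is purely the homological assembly. If anything needs care, it is confirming the global containments and composition identities that make \eqref{eq:elasticitycomplex-dis} a complex, but these rest on the conformity lemmas and the local complex rather than on any new computation.
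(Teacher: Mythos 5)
Your proposal is correct and follows essentially the same route as the paper, whose proof is exactly the combination of Proposition~\ref{prop:exactness-elasticity-surjective}, Proposition~\ref{prop:exactness-elasticity-first}, and the dimension identity \eqref{eq:dimcounting-elasticity}. The rank--nullity bookkeeping you spell out (using $\ker(\sym\grad|_{\bm X_h})=\bm{\mathcal{RM}}$ and surjectivity of $\div$ to convert the alternating dimension sum into exactness at $\bm\Gamma_h$) is precisely the homological assembly the paper leaves implicit.
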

\begin{proof}
A combination of \Cref{prop:exactness-elasticity-surjective}, \Cref{prop:exactness-elasticity-first}, and the dimension counting \eqref{eq:dimcounting-elasticity} completes the proof.
\end{proof}
\subsection{A finite element complex with reduced regularity}

This section considers a finite element complex, which reduces some additional regularity. The goal of this subsection is to construct

\begin{equation}\label{eq:elasticitycomplex-dis2}
    \bm{\mathcal{RM}} \stackrel{\subseteq}{\longrightarrow} \bm X_h \stackrel{\operatorname{sym} \operatorname{grad}}{\longrightarrow} \bm{\Phi}_h \stackrel{ \operatorname{curl} \operatorname{curl}^{\mathsf{T}} }{\longrightarrow} \widetilde{\bm \Gamma}_h \stackrel{\div}{\longrightarrow} \widetilde{\bm Z}_h\longrightarrow 0,
    \end{equation}
\subsubsection{\texorpdfstring{$\bm H(\operatorname{div}; \mathbb{S})$}{H(div;S)} conforming space with reduced regularity}

This subsubsection introduces an $\bm H(\operatorname{div}; \mathbb{S})$ conforming element space $\bm{\widetilde{\Gamma}}_h$ from \cite{hu2014simple}, with reduced regularity. The shape function space on $T$ is taken as
\begin{equation}
    \label{eq:shapefunc-HdivS-new}
    \begin{bmatrix}
    \sigma_{xx} & \sigma_{xy} & \sigma_{xz} \\
    \sigma_{yx} & \sigma_{yy} & \sigma_{yz} \\
    \sigma_{zx} & \sigma_{zy} & \sigma_{zz}
    \end{bmatrix}
    \in
    \begin{bmatrix}
        \cQ_{k+1,k-1,k-1} & \cQ_{k,k,k-1} & \cQ_{k,k-1,k}\\
        \cQ_{k,k,k-1} & \cQ_{k-1,k+1,k-1} & \cQ_{k-1,k,k} \\
        \cQ_{k,k-1,k} & \cQ_{k-1,k,k} & \cQ_{k-1,k-1,k+1}
    \end{bmatrix} = \bm \Gamma_T.
    \end{equation}

    \paragraph{The degrees of freedom on $\sigma_{xx}$} is defined as follows:
\begin{enumerate}
\item The moments of $\sigma_{xx}$ on each face $F_{yz}$ of $T$,
\begin{equation}
\int_{F_{yz}} \sigma_{xx} p~dydz \for p\in \cQ_{k-1,k-1}(y,z).
\end{equation}
\item The moments of $\sigma_{xx}$ inside $T$,
\begin{equation}
\int_T \sigma_{xx} \for p \in \cQ_{k-1,k-1,k-1}(x,y,z).
\end{equation}
\end{enumerate}

\paragraph{The degrees of freedom on $\sigma_{xy}$} is defined as 
\begin{enumerate}
\item The moments of $\sigma_{xy}$ on each edge $e_z$,
\begin{equation}
\int_{e_z} \sigma_{xy} p ~dz \for p \in \cQ_{k-1}(z).
\end{equation}
\item The moments of $\sigma_{xy}$ on each face $F_{xz}$ and $F_{yz}$ of $T$, normal to $y$- and $x$-direction,
\begin{equation}
\int_{F_{xz}} \sigma_{xy} p~dxdz \for p \in \cQ_{k-2,k-1}(x,z),
\end{equation}
\begin{equation}
    \int_{F_{yz}} \sigma_{xy} p~dydz \for p \in \cQ_{k-2,k-1}(y,z).
\end{equation}
\item The moments of $\sigma_{xy}$ inside $T$,
\begin{equation}
\int_T \sigma_{xy} p~dxdydz \for p \in \cQ_{k-2,k-2,k-1}(x,y,z).
\end{equation}
\end{enumerate}

\begin{proposition}
When $k \ge 2$, the degrees of freedom above are unisolvent with respect to the shape function space \eqref{eq:shapefunc-HdivS-new}, and the resulting finite element space $\bm {\widetilde{\Gamma}}_h$ is $\bm H(\div;\mathbb S)$ conforming.
\end{proposition}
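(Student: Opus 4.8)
The plan is to mimic the componentwise strategy already used for the other elements and to lean on the dimension counts recorded with each block of degrees of freedom. Because both the shape function space $\bm\Gamma_T$ in \eqref{eq:shapefunc-HdivS-new} and the functionals are invariant under the cyclic permutation $x\mapsto y\mapsto z\mapsto x$, it suffices to prove unisolvency for one diagonal component $\sigma_{xx}$ and one off-diagonal component $\sigma_{xy}$; the remaining four then follow verbatim. For each component I would first verify that the number of prescribed functionals equals the dimension of the relevant $\cQ$ space, and then argue that vanishing of all functionals forces the component to vanish, so that unisolvency reduces to this injectivity statement.

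For $\sigma_{xx}\in\cQ_{k+1,k-1,k-1}$: the two face functionals on $F_{yz}$ test against all of $\cQ_{k-1,k-1}(y,z)$, so the trace $\sigma_{xx}|_{F_{yz}}$, which lies in $\cQ_{k-1,k-1}(y,z)$, must vanish on both faces $x=0$ and $x=1$. Hence $x(1-x)\mid\sigma_{xx}$, that is $\sigma_{xx}=x(1-x)\sigma_1$ with $\sigma_1\in\cQ_{k-1,k-1,k-1}$. The interior functionals run over all $p\in\cQ_{k-1,k-1,k-1}$, so choosing $p=\sigma_1$ and using the positivity of $x(1-x)$ on the open element gives $\sigma_1=0$. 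For $\sigma_{xy}\in\cQ_{k,k,k-1}$: the edge functionals on the four edges $e_z$ (testing against all of $\cQ_{k-1}(z)$) make $\sigma_{xy}$ vanish on every $e_z$. On a face $F_{yz}$ the two bounding edges $e_z$ sit at $y=0,1$, so $y(1-y)$ divides $\sigma_{xy}|_{F_{yz}}\in\cQ_{k,k-1}(y,z)$; writing it as $y(1-y)g$ with $g\in\cQ_{k-2,k-1}(y,z)$ and testing against all such $g$ via the $F_{yz}$ functionals forces $\sigma_{xy}|_{F_{yz}}=0$. The identical argument on $F_{xz}$, where the edges $e_z$ sit at $x=0,1$, gives $\sigma_{xy}|_{F_{xz}}=0$. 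Thus $x(1-x)y(1-y)\mid\sigma_{xy}$, i.e.\ $\sigma_{xy}=x(1-x)y(1-y)r$ with $r\in\cQ_{k-2,k-2,k-1}$, and the interior functionals then force $r=0$.

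For conformity, the point is that the face functionals pin down the trace of each stress component on precisely the faces where it enters the normal traction $\bm\sigma\bm n$. Concretely, $\sigma_{xx}$ is single-valued on $F_{yz}$, the component $\sigma_{xy}$ is single-valued on both $F_{yz}$ and $F_{xz}$, and cyclic permutation shows $\sigma_{xz}$ is single-valued on $F_{yz}$ and $F_{xy}$, with the analogous statements for the other components. Consequently, across any face, say $F_{yz}$ with normal in the $x$-direction, the whole column $(\sigma_{xx},\sigma_{xy},\sigma_{xz})^{\mathsf{T}}=\bm\sigma\bm n$ is single-valued, which is exactly the continuity required for $\bm H(\div;\bS)$ conformity.

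I expect the only real difficulty to be bookkeeping rather than analysis: one must match the cyclic relabelling of faces and edges to the three normal directions carefully so that every entry of $\bm\sigma\bm n$ is shown continuous on every face. The analytic content is the same factorization-and-positivity argument used in \Cref{sec:HdivT}, and the construction itself is that of \cite{hu2014simple}, so once the permutation bookkeeping is organized the proof is routine.
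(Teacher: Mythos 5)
Your proof is correct, and it is essentially the argument the paper relies on implicitly: the paper's own ``proof'' of this proposition is a one-line deferral to \cite{hu2014simple}, whereas you have written out the factorization-and-positivity argument in full. Your component reductions are sound: for $\sigma_{xx}\in\cQ_{k+1,k-1,k-1}$ the face moments against all of $\cQ_{k-1,k-1}(y,z)$ kill the trace on both faces $F_{yz}$, giving the factor $x(1-x)$, and the interior moments (against all of $\cQ_{k-1,k-1,k-1}$) finish the job; for $\sigma_{xy}\in\cQ_{k,k,k-1}$ the edge moments on the four edges $e_z$ annihilate the edge traces, the face moments on $F_{yz}$ and $F_{xz}$ then kill those face traces via the factors $y(1-y)$ and $x(1-x)$ respectively, and the interior moments conclude. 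The dimension matches you promise to check do hold ($2k^2+k^3=k^2(k+2)$ for the diagonal block and $4k+4k(k-1)+k(k-1)^2=k(k+1)^2$ for the off-diagonal block), and your conformity argument correctly observes that on a face with normal $\bm n$ every entry of $\bm\sigma\bm n$ has its trace fully determined by degrees of freedom sitting on that face and its bounding edges, hence is single-valued across interelement faces.
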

\begin{proof}

It follows from a similar argument as that of \cite{hu2014simple}.

\end{proof}
The dimension of the $\bm{\widetilde{\Gamma}}_h$ is 
$$ \bm{\widetilde{\Gamma}}_h = k \mathscr{E} +  [k^2 + 2k(k-1)] \mathscr{F} +  [3k^3 + 3k(k-1)^2] \mathscr{T}.$$
\subsubsection{A finite element complex with regularity and its exactness}
Take $\bm{\widetilde{Z}}_h$ as the vector-valued discontinuous element space, and the following relationship holds, 


\begin{equation*}
    \begin{aligned}
        \dim  \bm {X}_h = &  12 \mathscr{V} + [4(k-1) + 4(k-2) ] \mathscr{E} + [(k-2)^2+ 4(k-1)(k-2)] \mathscr{F} + 3(k-1)(k-2)^2 \mathscr{T}.\\
        \dim \bm{\Phi}_h = &[4k\mathscr{E} + 4k(k-2)\mathscr{F} + 3k(k-2)^2\mathscr{T}]\\
        &+[6\mathscr{V} + 4(k-1)\mathscr{E} + (k-2)\mathscr{E} + 2(k-1)^2\mathscr{F} + 2(k-1)(k-2)\mathscr{F} + 3(k-1)^2(k-2)\mathscr{T}]
    \\
            \dim \bm{\widetilde{\Gamma}}_h =  &k \mathscr{E} +  [k^2 + 2k(k-1)] \mathscr{F} +  [3k^3 + 3k(k-1)^2] \mathscr{T}.
    \\
            \dim \bm {\widetilde{Z}}_h = & 3(k+1)k^2 \mathscr{T}.
    \end{aligned}
    \end{equation*}

    A calculation yields that

\begin{equation}
\label{eq:dimcounting-elanew}
    \begin{split}
\dim \bm X_h - \dim \bm \Phi_h + \dim \bm{\widetilde{\Gamma}}_h - \dim \bm {\widetilde{Z}}_h  = & 6(\mathscr{V} - \mathscr{E} + \mathscr{F} - \mathscr{T}) \\ = & 6  = \dim \bm{\mathcal{RM}}.
    \end{split}
\end{equation}

    \begin{proposition}
    \label{prop:exactness-elasticitynew-surjective}
        When $k \ge 2$, it holds that $\div \bm{\widetilde{\Gamma}}_h = \bm {\widetilde{Z}}_h$.
        \end{proposition}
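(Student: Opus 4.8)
The plan is to reuse the explicit antiderivative construction of Proposition~\ref{prop:exactness-elasticity-surjective}, modified so that it tolerates the now fully discontinuous target space $\widetilde{\bm Z}_h$. Given $\bm q = (q_x,q_y,q_z)^{\mathsf T} \in \widetilde{\bm Z}_h$, I would define a symmetric matrix field $\bm \sigma$ whose diagonal entries are the \emph{global} antiderivatives
\begin{equation*}
\sigma_{xx}(x,y,z) = \int_0^x q_x(s,y,z)\,ds,\qquad \sigma_{yy}(x,y,z) = \int_0^y q_y(x,s,z)\,ds,\qquad \sigma_{zz}(x,y,z) = \int_0^z q_z(x,y,s)\,ds,
\end{equation*}
with all off-diagonal entries set to zero. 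The crucial difference from the non-reduced case is that the integration starts at the global boundary of $\Omega$ rather than at the lower face of each element; this is exactly the device already exploited in Proposition~\ref{prop:exactness-gradgrad-new-surjective} for the reduced gradgrad complex.

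First I would check that the construction is well defined and hits $\bm q$. Since $q_x|_K \in \cQ_{k,k-1,k-1}$, one integration in $x$ raises the $x$-degree to $k+1$ while leaving the $(y,z)$-degrees unchanged, so $\sigma_{xx}|_K \in \cQ_{k+1,k-1,k-1}$, which matches the corresponding slot of the shape function space $\bm \Gamma_T$ in \eqref{eq:shapefunc-HdivS-new}; the part accumulated from the elements below $K$ in the $x$-direction is independent of $x$, hence lies in $\cQ_{0,k-1,k-1}$ and does not increase the degree. The same count gives $\sigma_{yy}|_K \in \cQ_{k-1,k+1,k-1}$ and $\sigma_{zz}|_K \in \cQ_{k-1,k-1,k+1}$, so $\bm \sigma|_K \in \bm \Gamma_T$ on every $K \in \mathcal T_h$. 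A direct computation then gives $(\div \bm \sigma)_x = \px \sigma_{xx} = q_x$, and likewise for the other two components, whence $\div \bm \sigma = \bm q$.

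The main point to verify — and the only delicate step — is the $\bm H(\div;\bS)$ conformity, i.e. that $\bm \sigma \in \widetilde{\bm \Gamma}_h$. Across a face $F_{yz}$ normal to the $x$-direction the relevant normal trace is $(\sigma_{xx},\sigma_{yx},\sigma_{zx}) = (\sigma_{xx},0,0)$; since $\sigma_{xx}$ is a global integral in $x$ it is a continuous function of $x$, hence single-valued across $F_{yz}$, while the two off-diagonal entries vanish identically. The cyclic counterparts give continuity of the normal traces across $F_{xz}$ and $F_{xy}$, so all normal components match and $\bm \sigma$ is $\bm H(\div;\bS)$ conforming with the correct local degrees, i.e. $\bm \sigma \in \widetilde{\bm \Gamma}_h$. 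I expect this conformity check to be exactly where the argument departs from the non-reduced Proposition~\ref{prop:exactness-elasticity-surjective}: there it relied on the inter-element continuity of $q_x$ across $F_{yz}$, which is no longer available because $\widetilde{\bm Z}_h$ is fully discontinuous; the replacement is precisely the observation that antidifferentiation in a single coordinate direction turns a piecewise polynomial that jumps across $F_{yz}$ into a function continuous in that direction, so that single-valuedness of the normal trace is restored without assuming any continuity of $\bm q$.
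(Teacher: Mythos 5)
Your proposal is correct and takes essentially the same route as the paper: the paper's proof of this proposition simply invokes the diagonal-antiderivative construction of \Cref{prop:exactness-elasticity-surjective}, i.e.\ $\sigma_{xx}=\int_0^x q_x\,ds$, $\sigma_{yy}=\int_0^y q_y\,ds$, $\sigma_{zz}=\int_0^z q_z\,ds$ with vanishing off-diagonal entries, exactly as you do. Your closing observation---that single-valuedness of the normal trace now follows from continuity of the antiderivative in the integration direction, since the continuity of $\bm q$ used in the non-reduced case is unavailable for the fully discontinuous space $\widetilde{\bm Z}_h$---is precisely the detail that makes the paper's ``the proof is similar'' rigorous.
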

        \begin{proof}
        The proof is similar to that of \Cref{prop:exactness-elasticity-surjective}.
        \end{proof}
        
        \begin{theorem}
When $k \ge 2$, the finite element sequence \eqref{eq:elasticitycomplex-dis2} is an exact complex.
        \end{theorem}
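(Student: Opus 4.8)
The plan is to follow exactly the template of the full-regularity theorem attached to \eqref{eq:elasticitycomplex-dis}, exploiting the fact that the reduced complex \eqref{eq:elasticitycomplex-dis2} differs from it only in its last two slots: the spaces $\bm X_h$ and $\bm\Phi_h$, together with the maps $\sym\grad$ and $\curl\curl^{\mathsf T}$, are literally unchanged, and only $\bm\Gamma_h$, $\bm Z_h$ are replaced by $\widetilde{\bm\Gamma}_h$, $\widetilde{\bm Z}_h$. Since the div--div conforming space $\bm\Gamma_h$ and the merely $\bm H(\div;\bS)$ conforming space $\widetilde{\bm\Gamma}_h$ share the same local shape function space $\bm\Gamma_T$ and $\bm\Gamma_h\subseteq\widetilde{\bm\Gamma}_h$ (more inter-element continuity means a smaller global space), one has $\curl\curl^{\mathsf T}\bm\Phi_h\subseteq\bm\Gamma_h\subseteq\widetilde{\bm\Gamma}_h$, so \eqref{eq:elasticitycomplex-dis2} is a genuine complex. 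It then remains to verify exactness at each slot, and I would establish it directly at every slot except $\widetilde{\bm\Gamma}_h$, deferring that one to a dimension count.

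Concretely, I would proceed as follows. Exactness at $\bm X_h$ amounts to $\ker(\sym\grad)=\bm{\mathcal{RM}}$: since $\bm X_h\subset\bm H^1$ and $\bm{\mathcal{RM}}\subset\bm X_T$ for $k\ge 2$, any $\bm v\in\bm X_h$ with $\sym\grad\bm v=0$ is a global rigid motion, and conversely every rigid motion is annihilated, exactly as in the continuous setting. Exactness at $\bm\Phi_h$, namely $\ker(\curl\curl^{\mathsf T})=\im(\sym\grad)$, is furnished verbatim by \Cref{prop:exactness-elasticity-first}, whose argument uses only the spaces $\bm X_h$, $\bm\Phi_h$, the identity \eqref{eq:curlsymgrad}, and the conformity of $\bm\Phi_h$, none of which has changed. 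Surjectivity of $\div:\widetilde{\bm\Gamma}_h\to\widetilde{\bm Z}_h$ is \Cref{prop:exactness-elasticitynew-surjective}. With these three exactness statements (plus the obvious injectivity of $\bm{\mathcal{RM}}\hookrightarrow\bm X_h$), the homology of the augmented complex vanishes everywhere except possibly at $\widetilde{\bm\Gamma}_h$; since the alternating dimension sum of the augmented complex is zero by \eqref{eq:dimcounting-elanew}, and each homology dimension is nonnegative, the remaining homology must vanish as well, yielding $\ker(\div)=\im(\curl\curl^{\mathsf T})$ and completing the proof.

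The computations are light here because all the substantive analysis has already been done upstream; the only points requiring care are bookkeeping rather than estimates. The first is confirming that \Cref{prop:exactness-elasticity-first} may be invoked unchanged, i.e.\ that its proof never appeals to the div--div regularity of $\bm\Gamma_h$, which is immediate on inspection. The second, and the closest thing to a genuine obstacle, is the dimension-counting closure: one must be sure that exactness has been verified at \emph{all} other slots so that the single vanishing Euler characteristic \eqref{eq:dimcounting-elanew} pins down the homology at $\widetilde{\bm\Gamma}_h$ uniquely. Equivalently one could prove $\ker(\div)=\im(\curl\curl^{\mathsf T})$ at $\widetilde{\bm\Gamma}_h$ by hand, but that would require tracking the weakened inter-element continuity of $\widetilde{\bm\Gamma}_h$ and is unnecessary given the clean dimension identity.
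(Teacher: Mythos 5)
Your proposal is correct and follows essentially the same route as the paper: the paper's proof is precisely the combination of the dimension count \eqref{eq:dimcounting-elanew}, the surjectivity result \Cref{prop:exactness-elasticitynew-surjective}, and the unchanged kernel characterization \Cref{prop:exactness-elasticity-first}, with the remaining exactness at $\widetilde{\bm\Gamma}_h$ forced by the vanishing Euler characteristic. Your additional bookkeeping (checking $\bm\Gamma_h\subseteq\widetilde{\bm\Gamma}_h$ so that \eqref{eq:elasticitycomplex-dis2} is a complex, and exactness at $\bm X_h$) simply makes explicit what the paper leaves implicit.
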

        \begin{proof}
A combination of dimension counting \eqref{eq:dimcounting-elanew}, \Cref{prop:exactness-elasticitynew-surjective,prop:exactness-elasticity-first} completes the proof.
        \end{proof}

\bibliographystyle{plain}
\bibliography{ref}

\end{document}